\documentclass[oneside]{amsart}
\usepackage{geometry}

\usepackage{amsmath,amsfonts,amssymb,amsthm}
\usepackage{mathtools}
\usepackage{enumitem}
\usepackage{tikz-cd}
\usepackage{color}
\usepackage{comment}
\usepackage{tikz}
\usepackage{lipsum}
\usepackage{tikz-3dplot}
\usetikzlibrary{decorations.pathmorphing}
\usepackage{thm-restate}
\usepackage{wrapfig}
\usepackage{float}
\usepackage[textsize=footnotesize]{todonotes} 
\usepackage{cite}

\usetikzlibrary{shapes.geometric, positioning, patterns}
\usetikzlibrary{calc}
\usepackage[dvipsnames]{xcolor}

\DeclareMathOperator{\col}{Col}

\newcommand{\str}[1]{\mathbb{#1}}
\newcommand{\cat}[1]{\mathcal{#1}}

\newcommand{\pol}{\operatorname{Pol}}

\newcommand{\CSP}{\operatorname{CSP}}
\newcommand{\Pol}{\operatorname{Pol}}

\newcommand{\idem}{\mathrm{idem}}

\declaretheorem[
	name=Theorem,
	numberwithin=section
	]{theorem}
\declaretheorem[
	name=Lemma,
	sibling=theorem,
	]{lemma}
\declaretheorem[
	name=Proposition,
	sibling=theorem,
	]{proposition}

\declaretheorem[
	name=Definition,
	style=definition,
    sibling=theorem
	]{definition}

\declaretheorem[
	name=Remark,
	style=remark,
	numbered=no
	]{remark}	
\declaretheorem[
	name=Example,
	style=remark,
	numbered=no
	]{exam}

\declaretheorem[name=Claim,
    style=remark,
    numbered=no
    ]{claim}

\newcommand{\op}{^{\mathrm{op}}}
\newcommand{\set}{\mathrm{set}}

\DeclareMathOperator{\id}{id}

\DeclareFontFamily{U}{hira}{}
\DeclareFontShape{U}{hira}{m}{n}{<-> udmj30}{}

\title[Cut-homotopies and the complexity of edge-coloring problems]{Cut-homotopies and the complexity\\ of edge-coloring problems} 

\author{Alexey Barsukov}

\author{Roman Feller}
\address{Technische Universit\"{a}t Wien, Austria}
\email{roman.feller@tuwien.ac.at}

\author{Maximilian Hadek}
\address{Faculty of Mathematics and Physics, Charles University, Czechia}
\email{maximilian.hadek@matfyz.cuni.cz}

\author{Davide Perinti}
\address{Technische Universit\"{a}t Hamburg, Germany}
\email{davide.perinti@tuhh.de}

\thanks{\noindent The first three authors are supported by the European Unions ERC Synergy Grant 101071674, POCOCOP. Views and opinions expressed are however those of the authors only and do not necessarily reflect those of the European Union or the European Research Council Executive Agency. Neither the European Union nor the granting authority can be held responsible for them. Davide Perinti is supported by the Deutsche Forschungsgemeinschaft (DFG) under the project number 534904934.
}

\usepackage[colorlinks = true,
            linkcolor = blue,
            urlcolor  = blue,
            citecolor = blue,
            anchorcolor = blue]{hyperref}

\begin{document}

\begin{abstract}
    We study the computational complexity of problems that ask if a given graph admits an edge-coloring that does not contain an edge-colored clique from some fixed finite family.
    We show that every such problem is poly-time equivalent to a Constraint Satisfaction Problem, yielding a P vs.\ NP-complete dichotomy.
    Our main contribution lies in the reduction from the CSP to the coloring problem where we apply methods from Ramsey theory and a novel notion of cut-homotopy.
\end{abstract}

\makeatletter
\let\oldmakefntext\@makefntext
\renewcommand{\@makefntext}[1]{\noindent#1}
\maketitle
\let\@makefntext\oldmakefntext
\makeatother

\thispagestyle{empty}

\vspace{-0.4cm}

\section{Introduction}\label{section:introduction}

Fix a finite family $\cat F$ of edge-colored graphs and let $A$ be the set of occurring colors. Let $\col(\cat F)$ be the following computational problem:
\begin{center}\vspace{0.75em}
   {\it \parbox{0.85\textwidth}{Is there an edge-coloring of a given input graph using colors from $A$ so that no member of $\cat F$ appears as an induced edge-colored subgraph?} } \vspace{0.75em}
\end{center}
If there is such an edge-coloring we refer to it as an \emph{$\cat F$-free coloring}.
Clearly, every such problem $\col( \cat F)$ is contained in  NP; moreover, a recent result due to Kun and Ne\v{s}et\v{r}il~\cite[Th.~4.1]{KunNesetril} asserts that the class of computational problems arising in this fashion is NP-rich. 
In search of subclasses avoiding NP-richness a natural next step is therefore to restrict the shape of forbidden graphs.
In the present paper, we consider the case where every member of $\cat F$ is a colored clique.
One of the earliest works in this direction dates back to Garey and Johnson's list of NP-complete problems~\cite{GareyJohnson} which contains the problem $\col(\cat F)$, where $\cat F$ consists of monochromatic triangles in two colors.
Later, this was extended to monochromatic cliques of arbitrary size $\geq 3$~\cite{Burr} and to arbitrarily many colors~\cite{SiggersCliques, BarsukovMottetPerinti}.
We extend these hardness results to a P vs.\ NP-complete dichotomy for $\col(\cat F)$, when $\cat F$ consists of arbitrary edge-colored cliques.
\begin{theorem}\label{thm:main}
    For every finite family $\mathcal F$ of edge-colored cliques, the problem $\col(\mathcal F)$ is either solvable in polynomial time or NP-complete.
\end{theorem}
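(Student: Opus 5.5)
The plan is to show that $\col(\mathcal F)$ is polynomial-time equivalent to $\CSP(\str B)$ for a suitable finite structure $\str B$. The dichotomy then follows from the Bulatov--Zhuk theorem, which says every finite-template CSP is in P or NP-complete. Reductions in both directions are needed, and I would carry them out in the following order.

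\textbf{Easy direction: $\col(\mathcal F)$ reduces to a CSP.} Let $k$ be the largest clique size in $\mathcal F$. Form the structure $\str B$ with domain $A$ and one relation of arity $\binom{k'}{2}$ for each $k'\le k$. That relation consists of all colorings of $K_{k'}$, listed under a fixed edge order, that contain no member of $\mathcal F$. Given an input graph $G$, introduce one variable per edge. For every $k'$-clique of $G$ with $k'\le k$, add the corresponding constraint on its $\binom{k'}{2}$ edge variables. An induced copy of a colored clique is exactly a colored clique of $G$, so $\mathcal F$-free colorings of $G$ correspond to solutions. The reduction runs in polynomial time because $k$ is fixed.

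\textbf{Reduction back: a CSP reduces to $\col(\mathcal F)$.} I would show that $\CSP(\str B')$ reduces to $\col(\mathcal F)$ for some core-like template $\str B'$ that is equivalent to $\str B$. Concretely, I would pass to the structure of colorings realizable on the clique constraints and then to a core. The key steps are as follows.
\begin{enumerate}
\item Build gadgets: graphs in which every $\mathcal F$-free coloring forces some edges into tightly constrained configurations.
\item Use Ramsey theory. Any $\mathcal F$-free coloring of a sufficiently large, sufficiently sparse-in-cliques gadget contains large canonical (homogeneous or order-canonical) substructures. This lets me assume that colorings of the gadget behave uniformly on designated ``variable'' cliques.
\item Encode each CSP variable as a copy of such a canonical configuration. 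Encode each constraint by gluing cliques along these configurations, so that the allowed color patterns on the glued cliques reproduce the relation of $\str B$.
\end{enumerate}

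\textbf{Main obstacle.} The hard step is to ensure that the gadgets do not admit spurious colorings that fail to come from CSP solutions. The available canonical colorings may themselves form a structure that is only homomorphically related to $\str B$, not identical to it. This is where the paper's cut-homotopy notion should enter. I expect it to show that the space of $\mathcal F$-free canonical colorings is connected under cut moves, which recolor along a cut of the vertex set. Once connected, these colorings can be collapsed to a single representative per CSP value, which gives a pp-interpretation (or homomorphic equivalence) between the gadget structure and $\str B$.

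\textbf{Fallback and conclusion.} If the collapse via cut moves fails, I would try instead to show that the polymorphism clone of the gadget structure is minion-homomorphic to that of $\str B$. That would still give a reduction by the Barto--Opr\v{s}al--Pinsker theory. Combining the two directions with the CSP dichotomy then gives the theorem.
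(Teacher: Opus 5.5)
Your easy direction is correct; it is exactly the paper's hat reduction $G\mapsto\hat G$ to $\CSP(\str A)$. The gap is in the converse. You give no working mechanism for gadgets that ``reproduce the relation of $\str B$'', and the naive one fails. Gluing constraint gadgets along shared variable configurations creates new cliques (see the \textsc{Edge-Mono-Tri} and \textsc{LocalSwitches} examples), so the gadget must be equality-free, with distance at least four between the copies of $G_D$. Once you impose this, the domain copy is linked to the Ramsey-canonical part of $G_S$ only through copies of $G_\bullet\times P_2$. All you can conclude is that the coloring of $G_D$ is $2$-step cut-homotopic to $f\circ\chi_D$ for some polymorphism $f$, with $f$ unique only up to cut-homotopy (Propositions~\ref{prop:homotopy_witness2} and~\ref{prop:homotopy_witness}). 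Decoding a CSP value therefore needs a minion homomorphism $\pol(\str A)\to\id$ that is constant on cut-homotopy classes (Theorem~\ref{thm:reduction}). A minion homomorphism to $\id$ exists only when $\CSP(\str A)$ is NP-hard. That is why the paper argues by cases rather than building an unconditional equivalence. The tractable case is immediate from your easy direction. In the NP-complete case, the CSP dichotomy supplies some $\pol(\str A)\to\id$, which must then be upgraded.

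That upgrade is the missing key idea, and your sketch has the role of cut-homotopies backwards. You hope to show that canonical colorings are \emph{connected} under cut moves so they can be collapsed. The real danger is the opposite: cut moves might connect polymorphisms that encode different values. What must be proved is that they do not, i.e.\ that some minion homomorphism $\pol(\str A)\to\id$ collapses cut-homotopies (Theorem~\ref{thm:noHomotopies}). The paper proves this in three steps. First it passes to the core, which preserves cut-homotopies (Lemma~\ref{lem:str_to_core}). Next it passes to idempotent polymorphisms while making the cut-homotopies idempotent (Lemma~\ref{lem:core_to_idemp}). Finally it takes a naked set $(S,\theta)$ and shows that an idempotent cut-homotopy between polymorphisms acting as different projections on $S/_\theta$ would yield, via the compactness--Ramsey Lemma~\ref{lem:mapFactory}, a binary polymorphism acting on $S/_\theta$ as neither projection (Proposition~\ref{prop:noHomotopies}). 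Nothing in your plan plays this role. Your fallback does not rescue it either. $\col(\mathcal F)$ is $\CSP(\str A)$ restricted to inputs of the form $\hat G$, not a finite-template CSP, so Barto--Opr\v{s}al--Pinsker minion homomorphisms alone give no reductions \emph{to} it. This is precisely why the extra cut-homotopy structure is needed, and your ``gadget structure'' whose polymorphisms you would compare is never defined.
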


Such coloring problems with forbidden cliques can be expressed in GMSNP, a logic capturing more general coloring problems. It was introduced independently in~\cite{OBDA,MMSNP2} and has been studied extensively in subsequent work~\cite{Containment,GoldenPath,MinimalFiniteFactors,ExtensionalESO,BarsukovMadelaine,ASNP, Bitter,Feller,ForbTournaments}. Our result makes progress on the question whether GMSNP admits a complexity dichotomy, raised in~\cite{OBDA}.
It also makes progress on the Bodirsky--Pinsker 
conjecture~\cite{Conjecture} predicting a P vs.\ NP-complete dichotomy for an even more general class of forbidden-pattern coloring problems.
Despite intense development of the theoretical tools to tackle such problems~\cite{CoresRamsey,SmoothApproximations,Wonderland,TopologicalBirkhoff,DecidabilityDefinability,AntoineSampling,PPDefinitions,ExistenceCores,CSPsOrbitReduction,DecidabilityInterpretability}, see also~\cite{BodirskyBook}, a proof of the general conjecture---while being verified for various subclasses~\cite{EqualityCSPs,TemporalCSPs, PhyloCSPs, PosetCSPs, MMSNP, GraphSAT, homogeneousGraphCSPs, SmoothApproximations, ForbTournaments,Bitter,Feller,HypergraphCSP, RCC5}---still seems to be out of reach.
One fundamental challenge lies in overcoming the problem of coloring with more than two ``relevant'' colors. This is exemplified by the fact that it proves to be a core challenge in a unified approach to the above dichotomies initiated in~\cite{SmoothApproximations}.
For further discussion of this challenge, we refer the reader to the concluding discussion in~\cite{Feller}.

\subsection{An upper bound for the complexity}\label{subsection:hat_functor}

The \emph{constraint satisfaction problem} associated to a relational structure $\str A$,\footnote{see Section~\ref{section:preliminaries} for precise definitions of the notions appearing in this section}
denoted by $\CSP(\str A)$ for short, is the computational problem of deciding whether a given structure $\str X$ admits a homomorphism to $\str A$.  
Each problem $\col(\mathcal F)$ admits a simple reduction to the CSP of a suitably chosen structure $\str A$, whose domain is the set $A$ of colors and whose relations encode the $\cat F$-free colorings of cliques.
Given a graph $G$, we construct a structure $\hat G$ in the same signature as $\str A$, whose domain is the set of edges of $G$ and whose relations encode the cliques contained in $G$.
The relations of both $\str A$ and $\hat G$ are chosen precisely, so that $\cat F$-free colorings of $G$
are exactly the homomorphisms
$\hat G\to \str A$. 
As such, the assignment $G \mapsto \hat G$ is a poly-time reduction from $\col(\cat F)$ to $\CSP(\str A)$.
Further,
CSPs admit a P vs.\ NP-complete dichotomy
\cite{Bulatov,ZhukFOCS,ZhukACM}, so in order to prove Theorem~\ref{thm:main} it suffices to show that $\col(\cat F)$ is NP-complete, whenever the corresponding $\CSP(\str A)$ is.
\begin{figure}
    \centering
    \definecolor{cbBlue}{HTML}{0072B2}
\definecolor{cbRed}{HTML}{D55E00}   
\definecolor{cbGreen}{HTML}{009E73}

\begin{tikzpicture}[scale=0.8]

    \begin{scope}[xshift=0cm]
        \tdplotsetmaincoords{70}{285}
        \begin{scope}[tdplot_main_coords, scale=1, line join=round, line cap=round]
            \tikzset{
                edgeLabel/.style={circle, fill=white, draw=black, inner sep=1pt, minimum size=14pt, font=\small},
                vertex/.style={circle, fill=black, draw=black, inner sep=0pt, minimum size=3pt}
            }
            \coordinate (Top)   at (0, 0, 2.828);
            \coordinate (Back)  at (0, 2, 0);
            \coordinate (Left)  at (-1.732, -1, 0);
            \coordinate (Right) at (1.732, -1, 0);
            \coordinate (M_LT) at (-0.866, -0.5, 1.414);
            \coordinate (M_BT) at (0, 1, 1.414);
            \coordinate (M_BR) at (0.866, 0.5, 0);
            \coordinate (M_LR) at (0, -1, 0);
            
            \draw[cbBlue, fill=cbBlue!15, fill opacity=0.3, draw opacity=0]
                (M_LT) -- (M_BT) -- (M_BR) -- (M_LR) -- cycle;
            \draw[cbBlue, thick] (M_LT) -- (M_LR);
            \draw[cbBlue, thick] (M_LT) -- (M_BT);
            \draw[cbBlue, thick, dashed] (M_LR) -- (M_BR);
            \draw[cbBlue, thick, dashed] (M_BT) -- (M_BR);
            
            \draw (Left) -- node[edgeLabel] {$\bar a_4$} (Back);
            \draw[dashed] (Right) -- node[edgeLabel, solid] {$\bar a_5$} (Back);
            \draw (Top) -- node[edgeLabel] {$\bar a_1$} (Back);
            \draw (Left) -- node[edgeLabel] {$\bar a_3$} (Right);
            \draw (Top) -- node[edgeLabel] {$\bar a_6$} (Left);
            \draw (Top) -- node[edgeLabel] {$\bar a_2$} (Right);

            \node[vertex] at (Top) {};
            \node[vertex] at (Back) {};
            \node[vertex] at (Left) {};
            \node[vertex] at (Right) {};
        \end{scope}
\end{scope}

    \draw[->, thick, >=stealth,
          decoration={snake, amplitude=3pt, segment length=8pt, pre length=4pt, post length=6pt},
          decorate]
        (2.0, 1) -- (5.0, 1);

    \begin{scope}[xshift=7.5cm]
        \tdplotsetmaincoords{70}{285}
        \begin{scope}[tdplot_main_coords, scale=1, line join=round, line cap=round]
            \tikzset{
                edgeLabel/.style={rectangle, rounded corners, fill=white, draw=black, inner sep=2pt, font=\small},
                vertex/.style={circle, fill=black, draw=black, inner sep=0pt, minimum size=3pt}
            }
            \coordinate (Top)   at (0, 0, 2.828);
            \coordinate (Back)  at (0, 2, 0);
            \coordinate (Left)  at (-1.732, -1, 0);
            \coordinate (Right) at (1.732, -1, 0);
            \draw (Left) -- node[edgeLabel, pos=0.5,yshift=-5] {$f(\bar a_4)$} (Back);
            \draw[dashed] (Right) -- node[edgeLabel, solid, pos=0.57] {$h(\bar a_5)$} (Back);
            \draw (Top) -- node[edgeLabel, pos=0.5, xshift=-5] {$h(\bar a_1)$} (Back);
            \draw (Left) -- node[edgeLabel, pos=0.5,xshift=5] {$h(\bar a_3)$} (Right);
            \draw (Top) -- node[edgeLabel, pos=0.5] {$h(\bar a_6)$} (Left);
            \draw (Top) -- node[edgeLabel, pos=0.4, xshift=8] {$g(\bar a_2)$} (Right);

            \node[vertex] at (Top) {};
            \node[vertex] at (Back) {};
            \node[vertex] at (Left) {};
            \node[vertex] at (Right) {};
        \end{scope}
    \end{scope}

\end{tikzpicture}
    \caption{Application of a cut-homotopy $h$ between $f, g$ to a tuple of colorings of $K_4$. The blue plane indicates the cut.}
    \label{fig:cutsarecool}
\end{figure}

\begin{exam}[\textsc{Edge-Mono-Tri}]
    Let $\cat F$ contain monochromatic triangles in two colors. The corresponding structure $\str A$ is boolean, with a ternary relation consisting of all non-monochromatic colorings of the triangle: 
    \begin{equation*}
        \{(0,0,1),(0,1,0),(1,0,0),(1,1,0),(1,0,1),(0,1,1)\}.
    \end{equation*}
    In this case, $\CSP(\str A)$ is the classical \textsc{NotAllEqual-SAT} problem.
\end{exam}

\subsection{Polymorphisms and cut-homotopies}
To every $\CSP(\str A)$,
one can associate
a structure $\pol(\str A)$, the \emph{polymorphism minion of $\str A$}, which determines its complexity up to log-space reductions~\cite{pcspBible}.
Polymorphisms are homomorphisms $\str A^n \to \str A$ and they allow combining solutions of $\CSP(\str A)$. Namely, given an $n$-tuple $\str X\to \str A^n$ of solutions to a given input $\str X$, composing with a polymorphism $\str A^n \to \str A$ creates a new solution $\str X\to \str A$. 
While the aforementioned results of Bulatov~\cite{Bulatov} and Zhuk~\cite{ZhukFOCS,ZhukACM} provide efficient algorithms in the presence of ``non-trivial'' polymorphisms of $\str A$, their absence readily implies hardness of $\CSP(\str A)$~\cite{BulatovJeavonsKrokhin}.
The latter statement can be formalized by requiring the existence of a \emph{minion homomorphism} $\pol(\str A) \to \id$, see \cite{pcspBible,wonderlandOfAdjunctions}.

In the context of edge-coloring problems, we introduce the notion of \emph{cut-homo\-topies} between polymorphisms, enriching the structure of the polymorphism minion $\pol(\str A)$.
Cut-homotopies allow for even further combinations of solutions to $\col(\cat F)$: Let $G$ be a graph and $\chi\colon\hat G\to \str A^n$ be a homomorphism, which corresponds to an $n$-tuple of $\cat F$-free colorings of $G$.
Given a cut-homotopy between two polymorphisms $f,g\colon\str A^n\to\str A$, every cut of $G$ into two pieces produces a new $\cat F$-free coloring of $G$ that agrees with $f \circ \chi$ on edges contained in one side of the cut, and with $g \circ \chi$ on the other side of the cut. The cut-homotopy provides a coherent way of coloring the cut edges, as visualized in Figure~\ref{fig:cutsarecool}.  
Using cut-homotopies, we prove a hardness criterion for $\col(\cat F)$, analogous to the hardness criterion for CSPs.

\begin{restatable}{theorem}{thmReduction}
\label{thm:reduction}
    Assume that there is a minion homomorphism $\alpha\colon \Pol(\str A)\to \id$ that collapses cut-homotopies, meaning $\alpha(f)=\alpha(g)$ whenever $f$ and $g$ are cut-homotopic polymorphisms of $\str A$. Then the problem $\col(\cat F)$ is NP-complete.
\end{restatable}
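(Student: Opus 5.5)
Membership in NP is clear, so the work is NP-hardness: we reduce $\CSP(\str A)$, which is NP-hard by the assumed minion homomorphism $\Pol(\str A)\to\id$ \cite{BulatovJeavonsKrokhin,pcspBible}, to $\col(\cat F)$. Equivalently, we reduce from the label-cover / minor-condition problem associated with $\id$, following the framework of \cite{pcspBible}. Given an instance $\str X$ of $\CSP(\str A)$, we build in polynomial time a graph $G_{\str X}$ such that $\cat F$-free colorings of $G_{\str X}$ correspond, up to controlled noise, to homomorphisms $\str X\to\str A$.

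\textbf{Gadget.} The natural candidate is a ``free structure'' built from a large clique. Take a sufficiently large complete graph $K_N$ and consider its $\cat F$-free colorings. For each element of $\str X$, or more precisely for each constraint, we glue copies of $K_N$ so that, after Ramsey-theoretic regularisation, every $\cat F$-free coloring of the gadget induces a tuple-like object on the edges of $\hat G$. The key steps are as follows.

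First, we apply a Ramsey argument (in the spirit of \cite{CoresRamsey}): in any $\cat F$-free coloring of a huge clique there is a large subclique on which the color of an edge depends only on the order type of its endpoints relative to a few distinguished vertices. Such canonical colorings behave like polymorphisms $\str A^n\to\str A$, where $n$ counts the relevant "coordinates".

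Second, we show that a coloring of the whole gadget $G_{\str X}$ yields, for each variable block, a polymorphism $f_x$. Identifications in $G_{\str X}$ force minor relations $f_x^\pi = f_y$, but only "up to cut-homotopy". This is because edges between two blocks glued along a cut are colored by a mixed rule, which is exactly a cut-homotopy between the two induced polymorphisms (cf. Figure~\ref{fig:cutsarecool}).

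Third, we apply $\alpha$. Since $\alpha$ is a minion homomorphism that collapses cut-homotopies, the values $\alpha(f_x)\in\id$ satisfy the minor conditions on the nose. This gives a solution of the label-cover instance, hence of $\str X$.

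\textbf{Completeness direction.} Conversely, given $h\colon\str X\to\str A$, we color $G_{\str X}$ by assigning to each block the coloring induced by the projection polymorphism that $h$ selects. We then check $\cat F$-freeness clique by clique, using that every clique of $G_{\str X}$ lies in a block or crosses a cut. In the latter case we must choose the cross-cut coloring via the trivial cut-homotopy of a projection with itself.

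\textbf{Main obstacle.} I expect the hardest step to be the soundness extraction. We need to show that an arbitrary $\cat F$-free coloring of $G_{\str X}$ yields well-defined polymorphisms whose compatibilities are genuine cut-homotopies, and not something weaker. Two points need care here. The Ramsey step must be done uniformly across all blocks, which is feasible by choosing $N$ as a Ramsey number depending only on $\cat F$ and the arity bound, and which keeps the construction polynomial. We must also make sure that cliques straddling a gluing are colored by a single consistent cut-homotopy. My first attempt would be to make the gadget a blow-up of $\hat{}$-style structures in which each edge of $\hat G$ is replaced by a canonically ordered complete bipartite piece. Canonicity then turns the cross-edge coloring into a function of the colors on the two sides, which is precisely a cut-homotopy.
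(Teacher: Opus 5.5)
Your proposal has a genuine gap: the gadget is never pinned down, and the version you sketch fails. Blocks built from a ``sufficiently large'' clique $K_N$ break completeness outright. For \textsc{Edge-Mono-Tri}, $R(3,3)=6$ means $K_N$ with $N\ge 6$ has no $\cat F$-free coloring at all, so every $G_{\str X}$ would be a NO-instance. More fundamentally, the Ramsey step you describe (canonicity of a coloring of an uncolored clique with respect to order types and a few constants) only says that $\eta$ depends on order data. Nothing in it plays the role of the argument $a\in\str A^n$, so no polymorphism can be read off. What is needed is a reference coloring $\chi\colon\hat G\to\str A^n$ on the gadget, i.e.\ an $n$-tuple of $\cat F$-free colorings. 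One also needs a witness $H$ with $\chi'\colon\hat H\to\str A^n$ such that every coloring $\eta$ of $H$ agrees with $f\circ\chi$ on some copy of $G$, where $f$ is a genuine polymorphism. This is the structural Ramsey theorem of Ne\v{s}et\v{r}il and R\"{o}dl~\cite{FullNesetrilRodl1983} for $\str A^n$-colored ordered graphs, combined with the compactness Theorem~\ref{thm:konigramsey}; see Lemma~\ref{lem:ramsey}. Such witnesses are automatically colorable, since each coordinate of $\chi'$ is an $\cat F$-free coloring.

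Even with such witnesses, two ideas are missing, and they are where the cut-homotopies actually come from. They do not come from edges between blocks ``glued along a cut''.
\begin{itemize}
\item The canonical copy lies somewhere inside the witness $J$, not at the interface copies glued to other gadgets. The paper fixes this by attaching a copy of $G_\bullet\times P_2$ between the interface and \emph{every} admissible copy of $G_\bullet$ in $J$. Then each interface coloring is $2$-step cut-homotopic to $f\circ\chi_i$ (Proposition~\ref{prop:homotopy_witness2}).
\item A variable $x$ occurring in several constraints receives a polymorphism from each. ``Apply $\alpha$'' is well defined only if these agree up to cut-homotopy. This is exactly Proposition~\ref{prop:homotopy_witness}: $G_D$ and $\chi_D\colon\hat G_D\to\str A^B$ are chosen, via Lemma~\ref{lem:homotopyFactory}, so that $f\circ\chi_D\sim_4 f'\circ\chi_D$ forces $f\sim_4 f'$. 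Then $\alpha(f)=\alpha(f')$, because a $4$-step cut-homotopy restricts to a chain of four $1$-step ones. Your proposal has no substitute for this rigidity statement.
\end{itemize}
Finally, the interface copies must be at distance at least four, so that every clique of $G_{\str X}$ lies inside a single block. No clique crosses a gluing, contrary to your completeness step. With these pieces in place, soundness is the computation $g(\bar x(i))=\alpha(f\circ\pi_i^*)=\pi_i(\alpha(f))$ with $\alpha(f)\in S$. The paper carries this out for an arbitrary $\CSP(\str B)$, not only $\CSP(\str A)$.
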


\subsection{Gadget reductions}

To prove Theorem~\ref{thm:reduction} we will reduce $\CSP(\str A)$ to $\col(\cat F)$
by producing, for every input $\str X$ to $\CSP(\str A)$, a graph $G_{\str X}$ that admits an $\cat F$-free coloring if and only if there is a homomorphism $\str X \to \str A$.
To do so, fix a graph $G_D$ and
for every $r$-ary relation $S$ of $\str A$, a graph $G_S$ containing $r$ marked copies of $G_D$.
Given $\str X$, we construct $G_{\str X}$ by taking a copy of $G_D$ for every $x \in X$ and for every tuple $s$ of every relation $S$ of $\str X$ a copy of $G_S$. 
For every $x \in X$ we then identify the corresponding copy of $G_D$ with the $i$th marked copy of $G_D$ in $G_S$ corresponding to $s\in S$ whenever $x$ appears in the $i$th entry of $s$.

By choosing suitable graphs $G_D$ and $G_S$
---therein lies the difficulty--- we ensure that colorings of $G_{\str X}$ translate to a homomorphism $\str X \to \str A$, with the value of $x$ being determined by the coloring of the corresponding copy of $G_D$, and vice versa.

\begin{exam}[\textsc{Vertex-Mono-Tri}]
    Consider the problem of finding a \emph{vertex}-coloring of a graph with two colors that
    avoids monochromatic triangles.
    Trying to reduce \textsc{NotAllEqual-SAT} to this problem, choose $G_D$ and $G_S$ to be $K_1$ and $K_3$ respectively. The resulting replacement procedure is displayed in Figure~\ref{fig:vertex_small_girth}.
    However, solutions of a given input $\str X$ of \textsc{NotAllEqual-SAT} do not always translate to valid colorings of the resulting graph, as new triangles might be created during the identification process.
\end{exam}
\begin{figure}
    \centering
\newcommand{\arrowlabelA}{disassemble}
\newcommand{\arrowlabelB}{replace}
\newcommand{\arrowlabelC}{identify}
 
\begin{tikzpicture}[scale=0.73,
  >={Stealth[length=4pt,width=3pt]},
  thick
]
 
\def\mygeom{
  \coordinate (TOP) at (-1.2124,  0.7000);
  \coordinate (BL)  at ( 0.0062, -1.3892);
  \coordinate (BR)  at ( 1.2062,  0.6892);
  \coordinate (ML)  at (-0.6031, -0.3446);
  \coordinate (MR)  at (-0.0031,  0.6946);
  \coordinate (MB)  at ( 0.6062, -0.3500);
  \coordinate (hT)  at (-0.6062,  0.3500);
  \coordinate (hL)  at ( 0.0031, -0.6946);
  \coordinate (hR)  at ( 0.6031,  0.3446);
}
 
\def\mygeomBig{
  \coordinate (TOP) at (-1.8187,  1.0500);
  \coordinate (BL)  at ( 0.0093, -2.0838);
  \coordinate (BR)  at ( 1.8093,  1.0338);
  \coordinate (ML)  at (-0.9047, -0.5169);
  \coordinate (MR)  at (-0.0047,  1.0419);
  \coordinate (MB)  at ( 0.9093, -0.5250);
  \coordinate (hT)  at (-0.9093,  0.5250);
  \coordinate (hL)  at ( 0.0047, -1.0419);
  \coordinate (hR)  at ( 0.9047,  0.5169);
}
 
\def\sone{0}
\def\stwo{5.74}
\def\sthree{12.16}
\def\sfour{17.90}
 
\begin{scope}[shift={(\sone,0)}]
  \mygeom
  \coordinate (ctrlT_TML) at (-0.8173,  0.2294);
  \coordinate (ctrlT_MLR) at (-0.3940,  0.2275);
  \coordinate (ctrlT_MRT) at (-0.6073,  0.5931);
 
  \coordinate (ctrlL_MLB) at (-0.2080, -0.8152);
  \coordinate (ctrlL_BLM) at ( 0.2153, -0.8171);
  \coordinate (ctrlL_MBM) at ( 0.0020, -0.4515);
 
  \coordinate (ctrlR_MRM) at ( 0.3920,  0.2240);
  \coordinate (ctrlR_MBR) at ( 0.8153,  0.2221);
  \coordinate (ctrlR_BRM) at ( 0.6020,  0.5877);
 
  \fill[pattern=north east lines]
    (TOP) .. controls (ctrlT_TML) .. (ML)
          .. controls (ctrlT_MLR) .. (MR)
          .. controls (ctrlT_MRT) .. cycle;
  \fill[pattern=north east lines]
    (ML) .. controls (ctrlL_MLB) .. (BL)
         .. controls (ctrlL_BLM) .. (MB)
         .. controls (ctrlL_MBM) .. cycle;
  \fill[pattern=north east lines]
    (MR) .. controls (ctrlR_MRM) .. (MB)
         .. controls (ctrlR_MBR) .. (BR)
         .. controls (ctrlR_BRM) .. cycle;
 
  \draw (TOP) .. controls (ctrlT_TML) .. (ML)
              .. controls (ctrlT_MLR) .. (MR)
              .. controls (ctrlT_MRT) .. cycle;
  \draw (ML)  .. controls (ctrlL_MLB) .. (BL)
              .. controls (ctrlL_BLM) .. (MB)
              .. controls (ctrlL_MBM) .. cycle;
  \draw (MR)  .. controls (ctrlR_MRM) .. (MB)
              .. controls (ctrlR_MBR) .. (BR)
              .. controls (ctrlR_BRM) .. cycle;
 
  \foreach \p in {TOP,BL,BR,ML,MR,MB} { \fill (\p) circle (3pt); }
\end{scope}
 
\draw[->, decorate, decoration={snake, amplitude=1.5pt, segment length=5pt, post length=4pt}] (1.8038,-0.347)--(3.3238,-0.347);
\node[font=\normalsize] at (2.5638,0.153) {\arrowlabelA};
 
\begin{scope}[shift={(\stwo,0)}]
  \mygeomBig

  \coordinate (vTOP) at (TOP);
  \coordinate (vBL)  at (BL);
  \coordinate (vBR)  at (BR);
  \coordinate (vML)  at (ML);
  \coordinate (vMR)  at (MR);
  \coordinate (vMB)  at (MB);

  \coordinate (hTtl) at (-1.3256,  0.7725);
  \coordinate (hTtr) at (-0.4931,  0.7725);
  \coordinate (hTap) at (-0.9093,  0.0300);
  \coordinate (cshT_tltr_a) at (-1.0392,  0.7180);
  \coordinate (cshT_tltr_b) at (-0.7795,  0.7180);
  \coordinate (cshT_trap_a) at (-0.6821,  0.5443);
  \coordinate (cshT_trap_b) at (-0.8119,  0.3126);
  \coordinate (cshT_aptl_a) at (-1.0067,  0.3126);
  \coordinate (cshT_aptl_b) at (-1.1366,  0.5443);

  \coordinate (hLtl) at (-0.4115, -0.7944);
  \coordinate (hLtr) at ( 0.4209, -0.7944);
  \coordinate (hLap) at ( 0.0047, -1.5369);
  \coordinate (cshL_tltr_a) at (-0.1252, -0.8488);
  \coordinate (cshL_tltr_b) at ( 0.1346, -0.8488);
  \coordinate (cshL_trap_a) at ( 0.2319, -1.0226);
  \coordinate (cshL_trap_b) at ( 0.1021, -1.2543);
  \coordinate (cshL_aptl_a) at (-0.0927, -1.2543);
  \coordinate (cshL_aptl_b) at (-0.2225, -1.0226);

  \coordinate (hRtl) at ( 0.4884,  0.7644);
  \coordinate (hRtr) at ( 1.3209,  0.7644);
  \coordinate (hRap) at ( 0.9047,  0.0219);
  \coordinate (cshR_tltr_a) at ( 0.7748,  0.7099);
  \coordinate (cshR_tltr_b) at ( 1.0345,  0.7099);
  \coordinate (cshR_trap_a) at ( 1.1319,  0.5362);
  \coordinate (cshR_trap_b) at ( 1.0021,  0.3045);
  \coordinate (cshR_aptl_a) at ( 0.8073,  0.3045);
  \coordinate (cshR_aptl_b) at ( 0.6774,  0.5362);

  \fill[pattern=north east lines]
    (hTtl) .. controls (cshT_tltr_a) and (cshT_tltr_b) .. (hTtr)
           .. controls (cshT_trap_a) and (cshT_trap_b) .. (hTap)
           .. controls (cshT_aptl_a) and (cshT_aptl_b) .. cycle;
  \fill[pattern=north east lines]
    (hLtl) .. controls (cshL_tltr_a) and (cshL_tltr_b) .. (hLtr)
           .. controls (cshL_trap_a) and (cshL_trap_b) .. (hLap)
           .. controls (cshL_aptl_a) and (cshL_aptl_b) .. cycle;
  \fill[pattern=north east lines]
    (hRtl) .. controls (cshR_tltr_a) and (cshR_tltr_b) .. (hRtr)
           .. controls (cshR_trap_a) and (cshR_trap_b) .. (hRap)
           .. controls (cshR_aptl_a) and (cshR_aptl_b) .. cycle;

  \draw (hTtl) .. controls (cshT_tltr_a) and (cshT_tltr_b) .. (hTtr)
               .. controls (cshT_trap_a) and (cshT_trap_b) .. (hTap)
               .. controls (cshT_aptl_a) and (cshT_aptl_b) .. cycle;
  \draw (hLtl) .. controls (cshL_tltr_a) and (cshL_tltr_b) .. (hLtr)
               .. controls (cshL_trap_a) and (cshL_trap_b) .. (hLap)
               .. controls (cshL_aptl_a) and (cshL_aptl_b) .. cycle;
  \draw (hRtl) .. controls (cshR_tltr_a) and (cshR_tltr_b) .. (hRtr)
               .. controls (cshR_trap_a) and (cshR_trap_b) .. (hRap)
               .. controls (cshR_aptl_a) and (cshR_aptl_b) .. cycle;

  \foreach \p in {vTOP,vBL,vBR,vML,vMR,vMB,
                  hTtl,hTtr,hTap,
                  hLtl,hLtr,hLap,
                  hRtl,hRtr,hRap} {
    \fill (\p) circle (3pt);
  }

  \tikzset{myarrow/.style={->, shorten >=5pt, shorten <=5pt}}

  \draw[myarrow] (vTOP) -- (hTtl);
  \draw[myarrow] (vMR)  -- (hTtr);
  \draw[myarrow] (vML)  -- (hTap);

  \draw[myarrow] (vML)  -- (hLtl);
  \draw[myarrow] (vMB)  -- (hLtr);
  \draw[myarrow] (vBL)  -- (hLap);

  \draw[myarrow] (vMR)  -- (hRtl);
  \draw[myarrow] (vBR)  -- (hRtr);
  \draw[myarrow] (vMB)  -- (hRap);

\end{scope}
 
\draw[->, decorate, decoration={snake, amplitude=1.5pt, segment length=5pt, post length=4pt}] (8.1853,-0.347)--(9.7053,-0.347);
\node[font=\normalsize] at (8.9453,0.153) {\arrowlabelB};
 
\begin{scope}[shift={(\sthree,0)}]
  \mygeomBig
 
  \coordinate (vTOP) at (TOP);
  \coordinate (vBL)  at (BL);
  \coordinate (vBR)  at (BR);
  \coordinate (vML)  at (ML);
  \coordinate (vMR)  at (MR);
  \coordinate (vMB)  at (MB);
 
  \coordinate (tTtl) at ($(hT) + (-0.4163, 0.2475)$);
  \coordinate (tTtr) at ($(hT) + ( 0.4163, 0.2475)$);
  \coordinate (tTap) at ($(hT) + ( 0.0000,-0.4950)$);
 
  \coordinate (tLtl) at ($(hL) + (-0.4163, 0.2475)$);
  \coordinate (tLtr) at ($(hL) + ( 0.4163, 0.2475)$);
  \coordinate (tLap) at ($(hL) + ( 0.0000,-0.4950)$);
 
  \coordinate (tRtl) at ($(hR) + (-0.4163, 0.2475)$);
  \coordinate (tRtr) at ($(hR) + ( 0.4163, 0.2475)$);
  \coordinate (tRap) at ($(hR) + ( 0.0000,-0.4950)$);
 
  \draw (tTtl)--(tTtr)--(tTap)--cycle;
  \draw (tLtl)--(tLtr)--(tLap)--cycle;
  \draw (tRtl)--(tRtr)--(tRap)--cycle;
 
  \foreach \p in {vTOP,vBL,vBR,vML,vMR,vMB,
                  tTtl,tTtr,tTap,
                  tLtl,tLtr,tLap,
                  tRtl,tRtr,tRap} {
    \fill (\p) circle (3pt);
  }
 
  \tikzset{myarrow/.style={->, shorten >=5pt, shorten <=5pt}}
 
  \draw[myarrow] (vTOP) -- (tTtl);
  \draw[myarrow] (vML)  -- (tTap);
  \draw[myarrow] (vMR)  -- (tTtr);
 
  \draw[myarrow] (vML)  -- (tLtl);
  \draw[myarrow] (vBL)  -- (tLap);
  \draw[myarrow] (vMB)  -- (tLtr);
 
  \draw[myarrow] (vMR)  -- (tRtl);
  \draw[myarrow] (vMB)  -- (tRap);
  \draw[myarrow] (vBR)  -- (tRtr);
 
\end{scope}
 
\draw[->, decorate, decoration={snake, amplitude=1.5pt, segment length=5pt, post length=4pt}] (14.4760,-0.347)--(15.9960,-0.347);
\node[font=\normalsize] at (15.2360,0.153) {\arrowlabelC};
 
\begin{scope}[shift={(\sfour,0)}]
  \mygeom
 
  \coordinate (vTOP) at (TOP);
  \coordinate (vBL)  at (BL);
  \coordinate (vBR)  at (BR);
  \coordinate (vML)  at (ML);
  \coordinate (vMR)  at (MR);
  \coordinate (vMB)  at (MB);
 
  \draw (vTOP) -- (vML) -- (vMR) -- cycle;
  \draw (vML)  -- (vBL) -- (vMB) -- cycle;
  \draw (vMR)  -- (vMB) -- (vBR) -- cycle;
 
  \foreach \p in {vTOP,vBL,vBR,vML,vMR,vMB} {
    \fill (\p) circle (3pt);
  }

\end{scope}
 
\end{tikzpicture}
    \caption{The
    gadget replacement may add implicit constraints.}
    \label{fig:vertex_small_girth}
\end{figure}
This issue was resolved for vertex-coloring problems, by showing that, without changing the computational complexity, one may assume that the inputs of a CSP have large \emph{girth}, meaning that they do not contain short cycles~\cite{FederVardi,Kun}. In this way, no new triangles are introduced and solutions of such inputs to \textsc{NotAllEqual-SAT} correspond to valid colorings of the graph obtained by the above replacement procedure. 
However, this approach fails for edge colorings, as the replacement procedure might create new cliques, even if inputs have large girth.

\begin{exam}
    Consider the problem \textsc{Edge-Mono-Tri} from above.
    Figure~\ref{fig:edge_large_girth} displays the na\"ive gadget replacement, taking $G_D$ to be an edge and $G_S$ to be a triangle, with the colors indicating the directions in which the edges are identified.
    Note that the identification step collapses some edges that are associated with different elements of the input $\str X$ (red-green and black-blue edges in Figure~\ref{fig:edge_large_girth}). In particular, solutions of $\str X$ cannot correspond to valid colorings of the resulting graph.
\end{exam}

To avoid this problem, we look for \emph{equality-free gadgets}, i.e., gadgets in which copies of $G_D$ within $G_S$ are pairwise disjoint. Moreover, if the distance between distinct copies of $G_D$ is large enough, no new cliques will be created by the replacement, even if the girth of the input is small. 
In this case however, $G_D$ cannot be a single edge in general, as the following example shows.

\begin{exam}[\textsc{LocalSwitches}]
    Let $A$ consist of two colors and let $\cat F$ be the family of forbidden edge-colorings of $K_4$, which are obtained from the two monochromatic colorings by switching the colors of all the edges incident to a specific vertex. Up to symmetries, there are six of them: 
\begin{center}
\definecolor{cbBlue}{HTML}{0072B2}
\definecolor{cbRed}{HTML}{D55E00}   
\definecolor{cbGreen}{HTML}{009E73}

\newcommand{\blackedge}[2]{\draw[color=cbBlue, line width=1.2pt] (#1) -- (#2);
}
\newcommand{\rededge}[2]{\draw[color=cbRed, line width=1.2pt] (#1) -- (#2);
}
 
\newcommand{\kfouredge}[3]{\def\colorblack{black}\def\givencolor{#1}\ifx\givencolor\colorblack
    \draw[color=cbBlue, line width=1.2pt] (#2) -- (#3);
  \else
    \draw[color=cbRed, line width=1.2pt] (#2) -- (#3);
  \fi
}
 
\newcommand{\kfour}[7]{\begin{scope}[xshift=#1, line join=miter, line cap=rect]
    \kfouredge{#5}{1,0}{0,-1}
    \kfouredge{#4}{0,0}{1,-1}
    \kfouredge{#2}{0,0}{1,0}
    \kfouredge{#3}{0,0}{0,-1}
    \kfouredge{#6}{1,0}{1,-1}
    \kfouredge{#7}{0,-1}{1,-1}
    \foreach \x/\y in {0/0, 1/0, 0/-1, 1/-1} {
      \fill[black] (\x,\y) circle (2pt);
    }
  \end{scope}
}
 
\begin{tikzpicture}
  \kfour{0.0cm}{black}{black}{black}{black}{black}{black}
  \kfour{1.6cm}{BrickRed}{BrickRed}{BrickRed}{black}{black}{black}
  \kfour{3.2cm}{black}{BrickRed}{BrickRed}{BrickRed}{BrickRed}{black}
  \kfour{8.0cm}{BrickRed}{black}{black}{black}{black}{BrickRed}
  \kfour{6.4cm}{black}{black}{black}{BrickRed}{BrickRed}{BrickRed}
  \kfour{4.8cm}{BrickRed}{BrickRed}{BrickRed}{BrickRed}{BrickRed}{BrickRed}
\end{tikzpicture}
 
\end{center}
    Observe that, given any graph $G_S$ with an $\cat F$-free coloring, a new $\cat F$-free coloring can be obtained by switching the colors around any vertex of $G_S$.
    In particular, if $G_S$ has an $\cat F$-free coloring, then any pairwise disjoint edges $e_1,\dots,e_r$ can take any possible combination of the two colors.
\end{exam}

We extend the notion of cut-homotopy to colorings of graphs and, using Ramsey-theoretic tools, we construct graphs $G_D$ and $G_S$ with the following properties.
Every coloring of $G_D$ that extends to a coloring of $G_S$ is cut-homotopic to a coloring obtained by applying a polymorphism $f$ to a fixed tuple of colorings (Proposition~\ref{prop:homotopy_witness2}).
Moreover, $G_D$ can be chosen such that $f$ is unique up to cut-homotopy (Proposition~\ref{prop:homotopy_witness}).
In this sense, colorings of $G_D$ that extend to $G_S$ correspond, up to cut-homotopy, to polymorphisms of $\str A$.
Using the minion homomorphisms in the assumption of Theorem~\ref{thm:reduction}, we show that this gadget yields a reduction from $\CSP(\str A)$ to $\col(\cat F)$.

Cut-homotopies were inspired by the notion of \emph{multi-polymorphisms}, which characterize equality-free primitive-positive definability in relational structures~\cite{geiger1968closed,poschel2004galois}, which corresponds to equality-free gadget reductions between CSPs, where the domain gadget $G_D$ is a single vertex. 

\begin{figure}
    \centering
\definecolor{cbBlue}{HTML}{0072B2}
\definecolor{cbRed}{HTML}{D55E00}   
\definecolor{cbGreen}{HTML}{009E73}

\newcommand{\arrowlabelA}{disassemble}
\newcommand{\arrowlabelB}{replace}
\newcommand{\arrowlabelC}{identify}

\newcommand{\vcSeSZeroa}{black}
\newcommand{\vcSeSZerob}{cbGreen}
\newcommand{\vcSeSOna}{cbRed}
\newcommand{\vcSeSONb}{black}
\newcommand{\vcSeSTwa}{cbBlue}
\newcommand{\vcSeSTwb}{cbRed}
\newcommand{\vcSeSTha}{cbGreen}
\newcommand{\vcSeSTHb}{cbBlue}
\newcommand{\vcSeOutTa}{cbGreen}
\newcommand{\vcSeOutTb}{cbRed}
\newcommand{\vcSeOutRa}{black}
\newcommand{\vcSeOutRb}{cbBlue}
\newcommand{\vcSeOutBa}{cbRed}
\newcommand{\vcSeOutBb}{cbGreen}
\newcommand{\vcSeOutLa}{cbBlue}
\newcommand{\vcSeOutLb}{black}
\newcommand{\vcStTva}{black}
\newcommand{\vcStTvb}{cbRed}
\newcommand{\vcStTvc}{cbGreen}
\newcommand{\vcStRva}{cbRed}
\newcommand{\vcStRvb}{cbBlue}
\newcommand{\vcStRvc}{black}
\newcommand{\vcStBva}{cbGreen}
\newcommand{\vcStBvb}{cbBlue}
\newcommand{\vcStBvc}{cbRed}
\newcommand{\vcStLva}{black}
\newcommand{\vcStLvb}{cbGreen}
\newcommand{\vcStLvc}{cbBlue}
\newcommand{\vcFourTop}{black}
\newcommand{\vcFourBack}{cbBlue}
\newcommand{\vcFourLeft}{cbRed}
\newcommand{\vcFourRight}{cbGreen}

\begin{tikzpicture}[scale=0.58,
  >={Stealth[length=4pt,width=3pt]},
  thick
]

\def\mygeom{
  \coordinate (S0)   at (-1.0607,  0.0000);
  \coordinate (S1)   at ( 0.0000,  1.0607);
  \coordinate (S2)   at ( 1.0607,  0.0000);
  \coordinate (S3)   at ( 0.0000, -1.0607);
  \coordinate (OutT) at (-1.4489,  1.4489);
  \coordinate (OutR) at ( 1.4489,  1.4489);
  \coordinate (OutB) at ( 1.4489, -1.4489);
  \coordinate (OutL) at (-1.4489, -1.4489);
}

\def\mygeomBig{
  \coordinate (S0)   at (-1.5910,  0.0000);
  \coordinate (S1)   at ( 0.0000,  1.5910);
  \coordinate (S2)   at ( 1.5910,  0.0000);
  \coordinate (S3)   at ( 0.0000, -1.5910);
  \coordinate (OutT) at (-2.1733,  2.1733);
  \coordinate (OutR) at ( 2.1733,  2.1733);
  \coordinate (OutB) at ( 2.1733, -2.1733);
  \coordinate (OutL) at (-2.1733, -2.1733);
  \coordinate (hTop) at (-1.2548,  1.2548);
  \coordinate (hRt)  at ( 1.2548,  1.2548);
  \coordinate (hBot) at ( 1.2548, -1.2548);
  \coordinate (hLt)  at (-1.2548, -1.2548);
}

\def\sone{0}
\def\stwo{6.80}
\def\sthree{14.80}
\def\sfour{22.80}

\begin{scope}[shift={(\sone,0)}]
  \mygeom

  \coordinate (cT_S0Out_a) at (-1.0452,  0.5218);
  \coordinate (cT_S0Out_b) at (-1.1746,  1.0047);
  \coordinate (cT_OutS1_a) at (-1.0047,  1.1746);
  \coordinate (cT_OutS1_b) at (-0.5218,  1.0452);
  \coordinate (cT_S1S0_a)  at (-0.4596,  0.8132);
  \coordinate (cT_S1S0_b)  at (-0.8132,  0.4596);

  \coordinate (cR_S1Out_a) at ( 0.5218,  1.0452);
  \coordinate (cR_S1Out_b) at ( 1.0047,  1.1746);
  \coordinate (cR_OutS2_a) at ( 1.1746,  1.0047);
  \coordinate (cR_OutS2_b) at ( 1.0452,  0.5218);
  \coordinate (cR_S2S1_a)  at ( 0.8132,  0.4596);
  \coordinate (cR_S2S1_b)  at ( 0.4596,  0.8132);

  \coordinate (cB_S2Out_a) at ( 1.0452, -0.5218);
  \coordinate (cB_S2Out_b) at ( 1.1746, -1.0047);
  \coordinate (cB_OutS3_a) at ( 1.0047, -1.1746);
  \coordinate (cB_OutS3_b) at ( 0.5218, -1.0452);
  \coordinate (cB_S3S2_a)  at ( 0.4596, -0.8132);
  \coordinate (cB_S3S2_b)  at ( 0.8132, -0.4596);

  \coordinate (cL_S3Out_a) at (-0.5218, -1.0452);
  \coordinate (cL_S3Out_b) at (-1.0047, -1.1746);
  \coordinate (cL_OutS0_a) at (-1.1746, -1.0047);
  \coordinate (cL_OutS0_b) at (-1.0452, -0.5218);
  \coordinate (cL_S0S3_a)  at (-0.8132, -0.4596);
  \coordinate (cL_S0S3_b)  at (-0.4596, -0.8132);

  \fill[pattern=north east lines]
    (S0) .. controls (cT_S0Out_a) and (cT_S0Out_b) .. (OutT)
         .. controls (cT_OutS1_a) and (cT_OutS1_b) .. (S1)
         .. controls (cT_S1S0_a)  and (cT_S1S0_b)  .. cycle;
  \fill[pattern=north east lines]
    (S1) .. controls (cR_S1Out_a) and (cR_S1Out_b) .. (OutR)
         .. controls (cR_OutS2_a) and (cR_OutS2_b) .. (S2)
         .. controls (cR_S2S1_a)  and (cR_S2S1_b)  .. cycle;
  \fill[pattern=north east lines]
    (S2) .. controls (cB_S2Out_a) and (cB_S2Out_b) .. (OutB)
         .. controls (cB_OutS3_a) and (cB_OutS3_b) .. (S3)
         .. controls (cB_S3S2_a)  and (cB_S3S2_b)  .. cycle;
  \fill[pattern=north east lines]
    (S3) .. controls (cL_S3Out_a) and (cL_S3Out_b) .. (OutL)
         .. controls (cL_OutS0_a) and (cL_OutS0_b) .. (S0)
         .. controls (cL_S0S3_a)  and (cL_S0S3_b)  .. cycle;

  \draw (S0) .. controls (cT_S0Out_a) and (cT_S0Out_b) .. (OutT)
             .. controls (cT_OutS1_a) and (cT_OutS1_b) .. (S1)
             .. controls (cT_S1S0_a)  and (cT_S1S0_b)  .. cycle;
  \draw (S1) .. controls (cR_S1Out_a) and (cR_S1Out_b) .. (OutR)
             .. controls (cR_OutS2_a) and (cR_OutS2_b) .. (S2)
             .. controls (cR_S2S1_a)  and (cR_S2S1_b)  .. cycle;
  \draw (S2) .. controls (cB_S2Out_a) and (cB_S2Out_b) .. (OutB)
             .. controls (cB_OutS3_a) and (cB_OutS3_b) .. (S3)
             .. controls (cB_S3S2_a)  and (cB_S3S2_b)  .. cycle;
  \draw (S3) .. controls (cL_S3Out_a) and (cL_S3Out_b) .. (OutL)
             .. controls (cL_OutS0_a) and (cL_OutS0_b) .. (S0)
             .. controls (cL_S0S3_a)  and (cL_S0S3_b)  .. cycle;

  \foreach \p in {S0,S1,S2,S3,OutT,OutR,OutB,OutL} { \fill (\p) circle (3pt); }
\end{scope}

\draw[->, decorate, decoration={snake, amplitude=1.5pt, segment length=5pt, post length=4pt}]
  (2.50, 0.0) -- (4.30, 0.0);
\node[font=\normalsize] at (3.40, 0.5) {\arrowlabelA};

\begin{scope}[shift={(\stwo,0)}]
  \mygeomBig

  \coordinate (vS0)   at (S0);
  \coordinate (vS1)   at (S1);
  \coordinate (vS2)   at (S2);
  \coordinate (vS3)   at (S3);
  \coordinate (vOutT) at (OutT);
  \coordinate (vOutR) at (OutR);
  \coordinate (vOutB) at (OutB);
  \coordinate (vOutL) at (OutL);

  \coordinate (tTbl) at (-1.4117,  0.6692);
  \coordinate (tTbr) at (-0.6692,  1.4117);
  \coordinate (tTap) at (-1.6835,  1.6835);

  \coordinate (tRtl) at ( 0.6692,  1.4117);
  \coordinate (tRbl) at ( 1.4117,  0.6692);
  \coordinate (tRap) at ( 1.6835,  1.6835);

  \coordinate (tBtl) at ( 0.6692, -1.4117);
  \coordinate (tBtr) at ( 1.4117, -0.6692);
  \coordinate (tBap) at ( 1.6835, -1.6835);

  \coordinate (tLtr) at (-1.4117, -0.6692);
  \coordinate (tLbr) at (-0.6692, -1.4117);
  \coordinate (tLap) at (-1.6835, -1.6835);

  \coordinate (csT_blbr_a) at (-1.2384,  0.9909);
  \coordinate (csT_blbr_b) at (-0.9909,  1.2384);
  \coordinate (csT_brap_a) at (-1.0345,  1.4009);
  \coordinate (csT_brap_b) at (-1.3726,  1.4915);
  \coordinate (csT_apbl_a) at (-1.4915,  1.3726);
  \coordinate (csT_apbl_b) at (-1.4009,  1.0345);

  \coordinate (csR_blbr_a) at ( 0.9909,  1.2384);
  \coordinate (csR_blbr_b) at ( 1.2384,  0.9909);
  \coordinate (csR_brap_a) at ( 1.4009,  1.0345);
  \coordinate (csR_brap_b) at ( 1.4915,  1.3726);
  \coordinate (csR_apbl_a) at ( 1.3726,  1.4915);
  \coordinate (csR_apbl_b) at ( 1.0345,  1.4009);

  \coordinate (csB_blbr_a) at ( 0.9909, -1.2384);
  \coordinate (csB_blbr_b) at ( 1.2384, -0.9909);
  \coordinate (csB_brap_a) at ( 1.4009, -1.0345);
  \coordinate (csB_brap_b) at ( 1.4915, -1.3726);
  \coordinate (csB_apbl_a) at ( 1.3726, -1.4915);
  \coordinate (csB_apbl_b) at ( 1.0345, -1.4009);

  \coordinate (csL_blbr_a) at (-1.2384, -0.9909);
  \coordinate (csL_blbr_b) at (-0.9909, -1.2384);
  \coordinate (csL_brap_a) at (-1.0345, -1.4009);
  \coordinate (csL_brap_b) at (-1.3726, -1.4915);
  \coordinate (csL_apbl_a) at (-1.4915, -1.3726);
  \coordinate (csL_apbl_b) at (-1.4009, -1.0345);

  \fill[pattern=north east lines]
    (tTbl) .. controls (csT_blbr_a) and (csT_blbr_b) .. (tTbr)
           .. controls (csT_brap_a) and (csT_brap_b) .. (tTap)
           .. controls (csT_apbl_a) and (csT_apbl_b) .. cycle;
  \fill[pattern=north east lines]
    (tRtl) .. controls (csR_blbr_a) and (csR_blbr_b) .. (tRbl)
           .. controls (csR_brap_a) and (csR_brap_b) .. (tRap)
           .. controls (csR_apbl_a) and (csR_apbl_b) .. cycle;
  \fill[pattern=north east lines]
    (tBtl) .. controls (csB_blbr_a) and (csB_blbr_b) .. (tBtr)
           .. controls (csB_brap_a) and (csB_brap_b) .. (tBap)
           .. controls (csB_apbl_a) and (csB_apbl_b) .. cycle;
  \fill[pattern=north east lines]
    (tLtr) .. controls (csL_blbr_a) and (csL_blbr_b) .. (tLbr)
           .. controls (csL_brap_a) and (csL_brap_b) .. (tLap)
           .. controls (csL_apbl_a) and (csL_apbl_b) .. cycle;

  \draw (tTbl) .. controls (csT_blbr_a) and (csT_blbr_b) .. (tTbr)
               .. controls (csT_brap_a) and (csT_brap_b) .. (tTap)
               .. controls (csT_apbl_a) and (csT_apbl_b) .. cycle;
  \draw (tRtl) .. controls (csR_blbr_a) and (csR_blbr_b) .. (tRbl)
               .. controls (csR_brap_a) and (csR_brap_b) .. (tRap)
               .. controls (csR_apbl_a) and (csR_apbl_b) .. cycle;
  \draw (tBtl) .. controls (csB_blbr_a) and (csB_blbr_b) .. (tBtr)
               .. controls (csB_brap_a) and (csB_brap_b) .. (tBap)
               .. controls (csB_apbl_a) and (csB_apbl_b) .. cycle;
  \draw (tLtr) .. controls (csL_blbr_a) and (csL_blbr_b) .. (tLbr)
               .. controls (csL_brap_a) and (csL_brap_b) .. (tLap)
               .. controls (csL_apbl_a) and (csL_apbl_b) .. cycle;

  \foreach \p in {vS0,vS1,vS2,vS3,vOutT,vOutR,vOutB,vOutL,
                  tTbl,tTbr,tTap,
                  tRtl,tRbl,tRap,
                  tBtl,tBtr,tBap,
                  tLtr,tLbr,tLap} {
    \fill (\p) circle (3pt);
  }

  \tikzset{myarrow/.style={->, shorten >=5pt, shorten <=5pt}}

  \draw[myarrow] (vS0)   -- (tTbl);
  \draw[myarrow] (vS1)   -- (tTbr);
  \draw[myarrow] (vOutT) -- (tTap);

  \draw[myarrow] (vS1)   -- (tRtl);
  \draw[myarrow] (vS2)   -- (tRbl);
  \draw[myarrow] (vOutR) -- (tRap);

  \draw[myarrow] (vS2)   -- (tBtr);
  \draw[myarrow] (vS3)   -- (tBtl);
  \draw[myarrow] (vOutB) -- (tBap);

  \draw[myarrow] (vS3)   -- (tLbr);
  \draw[myarrow] (vS0)   -- (tLtr);
  \draw[myarrow] (vOutL) -- (tLap);

\end{scope}

\draw[->, decorate, decoration={snake, amplitude=1.5pt, segment length=5pt, post length=4pt}]
  (9.71, 0.0) -- (11.51, 0.0);
\node[font=\normalsize] at (10.61, 0.5) {\arrowlabelB};
 
\begin{scope}[shift={(\sthree,0)}]
  \mygeomBig

  \coordinate (tTv0) at (-0.8261,  0.8261);
  \coordinate (tTv1) at (-1.0979,  1.8404);
  \coordinate (tTv2) at (-1.8404,  1.0979);
  \coordinate (tRv0) at ( 0.8261,  0.8261);
  \coordinate (tRv1) at ( 1.8404,  1.0979);
  \coordinate (tRv2) at ( 1.0979,  1.8404);
  \coordinate (tBv0) at ( 1.0979, -1.8404);
  \coordinate (tBv1) at ( 0.8261, -0.8261);
  \coordinate (tBv2) at ( 1.8404, -1.0979);
  \coordinate (tLv0) at (-1.8404, -1.0979);
  \coordinate (tLv1) at (-0.8261, -0.8261);
  \coordinate (tLv2) at (-1.0979, -1.8404);

  \coordinate (tTm20) at (-1.3333,  0.9620);
  \coordinate (tTm01) at (-0.9620,  1.3333);
  \coordinate (tTm12) at (-1.4691,  1.4691);
  \coordinate (tRm20) at ( 0.9620,  1.3333);
  \coordinate (tRm01) at ( 1.3333,  0.9620);
  \coordinate (tRm12) at ( 1.4691,  1.4691);
  \coordinate (tBm01) at ( 0.9620, -1.3333);
  \coordinate (tBm12) at ( 1.3333, -0.9620);
  \coordinate (tBm20) at ( 1.4691, -1.4691);
  \coordinate (tLm01) at (-1.3333, -0.9620);
  \coordinate (tLm12) at (-0.9620, -1.3333);
  \coordinate (tLm20) at (-1.4691, -1.4691);

  \coordinate (eS0a) at (-2.0910, -0.5251);
  \coordinate (eS0b) at (-2.0910,  0.5251);
  \coordinate (eS1a) at (-0.5251,  2.0910);
  \coordinate (eS1b) at ( 0.5251,  2.0910);
  \coordinate (eS2a) at ( 2.0910,  0.5251);
  \coordinate (eS2b) at ( 2.0910, -0.5251);
  \coordinate (eS3a) at ( 0.5251, -2.0910);
  \coordinate (eS3b) at (-0.5251, -2.0910);
  \coordinate (eOutTa) at (-2.5446,  1.8020);
  \coordinate (eOutTb) at (-1.8020,  2.5446);
  \coordinate (eOutRa) at ( 1.8020,  2.5446);
  \coordinate (eOutRb) at ( 2.5446,  1.8020);
  \coordinate (eOutBa) at ( 2.5446, -1.8020);
  \coordinate (eOutBb) at ( 1.8020, -2.5446);
  \coordinate (eOutLa) at (-1.8020, -2.5446);
  \coordinate (eOutLb) at (-2.5446, -1.8020);

  \coordinate (mS0)   at (-2.0910,  0.0000);
  \coordinate (mS1)   at ( 0.0000,  2.0910);
  \coordinate (mS2)   at ( 2.0910,  0.0000);
  \coordinate (mS3)   at ( 0.0000, -2.0910);
  \coordinate (mOutT) at (-2.1733,  2.1733);
  \coordinate (mOutR) at ( 2.1733,  2.1733);
  \coordinate (mOutB) at ( 2.1733, -2.1733);
  \coordinate (mOutL) at (-2.1733, -2.1733);

  \draw (tTv0)--(tTv1)--(tTv2)--cycle;
  \draw (tRv0)--(tRv1)--(tRv2)--cycle;
  \draw (tBv0)--(tBv1)--(tBv2)--cycle;
  \draw (tLv0)--(tLv1)--(tLv2)--cycle;

  \draw (eS0a)--(eS0b);
  \draw (eS1a)--(eS1b);
  \draw (eS2a)--(eS2b);
  \draw (eS3a)--(eS3b);
  \draw (eOutTa)--(eOutTb);
  \draw (eOutRa)--(eOutRb);
  \draw (eOutBa)--(eOutBb);
  \draw (eOutLa)--(eOutLb);

  \fill[color=\vcSeSZeroa]   (eS0a)   circle (3pt);
  \fill[color=\vcSeSZerob]   (eS0b)   circle (3pt);
  \fill[color=\vcSeSOna]   (eS1a)   circle (3pt);
  \fill[color=\vcSeSONb]   (eS1b)   circle (3pt);
  \fill[color=\vcSeSTwa]   (eS2a)   circle (3pt);
  \fill[color=\vcSeSTwb]   (eS2b)   circle (3pt);
  \fill[color=\vcSeSTha]   (eS3a)   circle (3pt);
  \fill[color=\vcSeSTHb]   (eS3b)   circle (3pt);
  \fill[color=\vcSeOutTa] (eOutTa) circle (3pt);
  \fill[color=\vcSeOutTb] (eOutTb) circle (3pt);
  \fill[color=\vcSeOutRa] (eOutRa) circle (3pt);
  \fill[color=\vcSeOutRb] (eOutRb) circle (3pt);
  \fill[color=\vcSeOutBa] (eOutBa) circle (3pt);
  \fill[color=\vcSeOutBb] (eOutBb) circle (3pt);
  \fill[color=\vcSeOutLa] (eOutLa) circle (3pt);
  \fill[color=\vcSeOutLb] (eOutLb) circle (3pt);
  \fill[color=\vcStTva] (tTv0) circle (3pt);
  \fill[color=\vcStTvb] (tTv1) circle (3pt);
  \fill[color=\vcStTvc] (tTv2) circle (3pt);
  \fill[color=\vcStRva] (tRv0) circle (3pt);
  \fill[color=\vcStRvb] (tRv1) circle (3pt);
  \fill[color=\vcStRvc] (tRv2) circle (3pt);
  \fill[color=\vcStBva] (tBv0) circle (3pt);
  \fill[color=\vcStBvb] (tBv1) circle (3pt);
  \fill[color=\vcStBvc] (tBv2) circle (3pt);
  \fill[color=\vcStLva] (tLv0) circle (3pt);
  \fill[color=\vcStLvb] (tLv1) circle (3pt);
  \fill[color=\vcStLvc] (tLv2) circle (3pt);

  \tikzset{myarrow/.style={->, shorten >=5pt, shorten <=5pt}}

  \draw[myarrow] (-2.0910, 0.1800) .. controls (-1.6555, 0.1800) and (-1.4443, 0.5480) .. (tTm20);
  \draw[myarrow] (-2.0910,-0.1800) .. controls (-1.6555,-0.1800) and (-1.4443,-0.5480) .. (tLm01);

  \draw[myarrow] (-0.1800, 2.0910) .. controls (-0.1800, 1.6555) and (-0.5480, 1.4443) .. (tTm01);
  \draw[myarrow] ( 0.1800, 2.0910) .. controls ( 0.1800, 1.6555) and ( 0.5480, 1.4443) .. (tRm20);

  \draw[myarrow] ( 2.0910, 0.1800) .. controls ( 1.6555, 0.1800) and ( 1.4443, 0.5480) .. (tRm01);
  \draw[myarrow] ( 2.0910,-0.1800) .. controls ( 1.6555,-0.1800) and ( 1.4443,-0.5480) .. (tBm12);

  \draw[myarrow] ( 0.1800,-2.0910) .. controls ( 0.1800,-1.6555) and ( 0.5480,-1.4443) .. (tBm01);
  \draw[myarrow] (-0.1800,-2.0910) .. controls (-0.1800,-1.6555) and (-0.5480,-1.4443) .. (tLm12);

  \draw[myarrow] (mOutT) -- (tTm12);
  \draw[myarrow] (mOutR) -- (tRm12);
  \draw[myarrow] (mOutB) -- (tBm20);
  \draw[myarrow] (mOutL) -- (tLm20);

\end{scope}

\draw[->, decorate, decoration={snake, amplitude=1.5pt, segment length=5pt, post length=4pt}]
  (18.75, 0.0) -- (20.55, 0.0);
\node[font=\normalsize] at (19.65, 0.5) {\arrowlabelC};
 
\begin{scope}[shift={(\sfour,0)}]
  \coordinate (TTop)   at ( 0.1702,  1.0908);
  \coordinate (TBack)  at (-1.1000, -0.5400);
  \coordinate (TLeft)  at ( 0.5105, -1.0908);
  \coordinate (TRight) at ( 1.1000, -0.3384);

  \draw (TLeft)--(TBack);
  \draw (TTop)--(TBack);
  \draw (TLeft)--(TRight);
  \draw (TTop)--(TLeft);
  \draw (TTop)--(TRight);

  \draw[dashed] (TRight)--(TBack);

  \fill[color=\vcFourTop]   (TTop)   circle (3pt);
  \fill[color=\vcFourBack]  (TBack)  circle (3pt);
  \fill[color=\vcFourLeft]  (TLeft)  circle (3pt);
  \fill[color=\vcFourRight] (TRight) circle (3pt);
\end{scope}

\end{tikzpicture}
    \caption{Since two edges share a vertex, the gadget replacement shrinks down the girth.}
    \label{fig:edge_large_girth}
\end{figure}

\subsection{Outlook}
We define cut-homotopies only for
those
problems discussed in the present paper, however they seem to naturally be applicable to more general forbidden-pattern-coloring problems, including edge-coloring problems with more general shapes of forbidden graphs, and also those that require coloring of ordered edges of potentially higher arity.
We wonder how far our techniques can scale, in particular: how much closer can cut-homotopies bring us to a  complexity dichotomy for GMSNP? 

Problems in the scope of the Bodirsky--Pinsker conjecture admit a standard reduction to a finite CSP~\cite{CSPsOrbitReduction}, similar to what we describe in Section~\ref{subsection:hat_functor} and the notion of cut-homotopies
can also be translated to this setting. However, there are tractable problems where the corresponding CSP is NP-complete; such problems include temporal~\cite{TemporalCSPs} and phylogeny~\cite{PhyloCSPs} constraint satisfaction.
We wonder where and how exactly our proof techniques fail for these problems,
but also if it is still possible to extract meaningful information
using our methods, perhaps similar to the treatment of temporal constraint satisfaction problems in~\cite{SmoothApproximations}.

\subsection{Overview}
In Section~\ref{section:preliminaries}, we recall CSPs and minions, and formally describe the structure $\str A$ and the reduction $G\mapsto\hat G$ corresponding to a family $\cat F$. 
In Section~\ref{section:cut_homotopies}, we define cut-homotopies of polymorphisms and an analogous notion for colorings of graphs. 
The reduction of Theorem~\ref{thm:reduction} is presented in Section~\ref{sec:reductions}.
In Section~\ref{sec:therearenoweirdhomotopies}, we show that the assumptions of Theorem~\ref{thm:reduction} are satisfied for a problem $\col(\cat F)$, whenever the corresponding CSP is NP-complete, completing the proof of Theorem~\ref{thm:main}.
Finally, in Section~\ref{sec:ramseysToolbox}, we prove the Ramsey-theoretic claims used in Sections~\ref{sec:reductions} and~\ref{sec:therearenoweirdhomotopies}.

\section{Preliminaries}\label{section:preliminaries}

\subsection{Graphs, colorings and the hat}\label{sec:graphcolhat}

In this paper, all graphs are assumed to be undirected and finite.
For a graph $G$, we will abuse the notation and refer to its vertex set as $G$ and denote its edges as $vw$, where $v,w \in G$. Since edges are symmetric, we have $vw =wv$.
For a natural number $n$, the clique $K_n$ is the graph with $n$ vertices and an edge $vw$ for each pair of distinct vertices $v,w\in K_n$. We identify its set of edges with $\binom{n}{2}$.

A relational \emph{structure} is a tuple $\str A = (A, R_1,\dots, R_q)$---where the \emph{domain} $A$ is a set and every \emph{relation} $R_i\subseteq A^{r_i}$ is a set of functions $r_i \to A$, for some finite set $r_i$ called the \emph{arity} of $R_i$. 
The tuple of arities $(r_1,\ldots, r_q)$ is the \emph{signature} of $\str A$.
Given another structure $\str X=(X,S_1, \dots, S_q)$ of the same signature, a map $f\colon X \to A$ is called a \emph{homomorphism} if $f\circ s\in R_i$ for every $s\in S_i$ and $i\leq q$.

The \emph{hat} associated to a fixed family $\cat F$ of edge-colored graphs assigns to every graph $G$ a structure $\hat G$ that is defined as follows.
The domain of $\hat G$ consists of the edges of $G$.
For every $n$ such that $\cat F$ contains at least one clique of size $n$, the structure $\hat G$ has a $\binom{n}{2}$-ary relation consisting of all maps of the form 
$$
\hat \phi \colon \binom{n}{2} \to \hat G, \quad ij \mapsto \phi(i)\phi(j),
$$ 
where $\phi$ is a homomorphism from $K_n$ to $G$. 
Generally, for a homomorphism of graphs $\phi\colon G \to H$, the map $\hat \phi\colon \hat G \to \hat H$ defined as $vw \mapsto \phi(v) \phi(w)$, is a homomorphism of structures. The hat preserves identity maps and composition, so it is a functor from the category of graphs to the category of structures in the signature dictated by $\cat F$.

We denote by $\str A$ the structure of the same signature whose domain is the set $A$ of colors that appear among the colorings of cliques in $\cat F$, where every $\binom{n}{2}$-ary relation $R_n$ consists of all $\cat F$-free edge-colorings of $K_n$.
Observe that a map from the set of edges of $G$ to $A$ is a homomorphism $\chi\colon \hat G\to \str A$, if and only if it is an $\cat F$-free coloring.
Further, a map $f\colon \str A^r \to \str A$ is a homomorphism
if and only if for every clique $K_n$ and every coloring $\chi \colon \hat K_n \to \str A^r$, the composition $f\circ \chi$ is an $\cat F$-free coloring of $K_n$.

\subsection{CSPs, polymorphisms and minions}
\label{sec:PrelimsCSPs}
The \emph{constraint satisfaction problem} $\CSP(\str A)$ of a relational structure $\str A$ takes structures $\str X$ of the same signature as $\str A$ as inputs and asks whether there is a homomorphism $\str X\to\str A$.
For a finite set $r$, $\str {\str A}^r$ denotes the $r$-th cartesian power of $\str A$ whose domain consists of functions $r\to A$. 
If $r = \{1,\dots,n\}$ we treat ${\str A}^r$ and $\str A^n$ as being equal.
Homomorphisms $\str A^r\to\str A$ are called $r$-ary \emph{polymorphisms}.
We denote the set of homomorphisms from $\str B$ to $\str A$ by $\hom(\str B, \str A)$.
The identity homomorphism of a structure $\str A$ is denoted by $\id_{\str A}$.

A \emph{minion} $\cat M$ is a functor from the category of finite sets to itself.
Explicitly, for every finite set $r$ there is a finite set $\cat M^{(r)}$ and for every map $\sigma\colon r\to s$ there is a map 
$
\cat M^{(\sigma)}\colon \cat M^{(r)}\to \cat M^{(s)}
$
respecting identity maps and composition. 
By $\id$, we denote the identity minion with $\id^{(r)} = r$.
A \emph{minion homomorphism} $\alpha \colon \cat M \to \cat N$ is a natural transformation, that is, $\alpha$ is a family of maps $\alpha_r\colon \cat M^{(r)} \to \cat N^{(r)}$ such that the following square commutes for every map $\sigma\colon r \to s$.
\begin{equation}
\label{eq:naturality}
    \begin{tikzcd}
    \cat M^{(r)} \arrow[r, "\cat M^{(\sigma)}"] \arrow[d, "\alpha_r"'] & \cat M^{(s)} \arrow[d, "\alpha_s"]\\
    \cat N^{(r)} \arrow[r, "\cat N^{(\sigma)}"] & \cat N^{(s)}
\end{tikzcd}
\end{equation}
We will frequently abuse notation and write $\alpha(x)$ instead of $\alpha_r(x)$.
The fundamental example of a minion is a \emph{polymorphism minion}: For a relational structure $\str A$, let $\pol(\str A)$ be the minion with
$
    \pol(\str A)^{(r)} = \hom(\str A^r,\str A).
$
Moreover, for a map $\sigma\colon r\to s$, the map $\pol(\str A)^{(\sigma)}$ assigns to each $f\colon \str A^r\to \str A$, the polymorphism 
$f\circ\sigma^*\colon\str A^s\to \str A$, $a \mapsto f(a \circ \sigma)$, where $\sigma^*$ denotes the precomposition map $a\mapsto a\circ\sigma$.
Of particular importance are minion homomorphisms $\alpha\colon\pol(\str A)\to\id$, as their existence implies hardness of $\CSP(\str A)$, see Corollary~9.5 in~\cite{pcspBible}. The naturality condition~(\ref{eq:naturality}) of such $\alpha$ precisely states that
$
\alpha( f\circ \sigma^* ) = \sigma(\alpha(f))  
$
for all polymorphisms $f$ and maps $\sigma$.

\section{Cut-homotopies}\label{section:cut_homotopies}

Consider the looped path $P_k$ of length $k$, i.e., the graph with vertices $0,\dots,k$ that has a loop $ii$ for every $i\in P_k$ and an edge $i(i+1)$ for each $0\leq i < k$.

\begin{definition}
We call a homomorphism $h\colon \str A^r \times \hat P_k\to \str A$ a $k$\emph{-step cut-homotopy of polymorphisms}. 
\end{definition}

Observe that every $h_{i}:= h(-,ii)\colon \str A^r\to\str A$ is a polymorphism of $\str A$; we say that $h_0$ and $h_k$ are $k$\emph{-step cut-homotopic} and write $h_{0}\sim_k h_{k}$.
Given a $k$-step cut-homotopy $h\colon \str A^r \times \hat P_k\to \str A$, for $0\leq i\leq k$ we shorten $h(-,ii)$ to $h_i$ and for $0\leq i < k$ we shorten $h(-,i(i+1))$ to $h_{i(i+1)}$. 
For $k=1$, we simply say \emph{cut-homotopy} and \emph{cut-homotopic}.

We refer to any graph homomorphism $G \to P_1$ as a \emph{cut of $G$}.
One can think of a cut partitioning the vertices of $G$ into two sets. Given a partition of $G$ into $G_0,G_1$ we denote by $G_0\mid G_1$ the cut $c: G \to P_1$ such that $c^{-1}(0) = G_0$ and $c^{-1}(1) = G_1$.
To illustrate their application, consider a cut-homotopy $h \colon \str A^r \times \hat P_1 \to \str A$, a graph $G$ and a coloring $\chi \colon \hat G \to \str A^r$. The polymorphisms $h_{0}, h_{1}$ induce two colorings of $G$, $h_{0}\circ \chi$ and $h_{1}\circ \chi$, and $h$ provides a way to ``translate'' between the two. More precisely, given \emph{any} cut $c \colon G \to P_1$, the coloring
\begin{equation*}
    \hat G\xrightarrow{(\chi,\hat c)} \str A^r\times\hat P_1\xrightarrow{h} \str A
\end{equation*} 
agrees with $h_0\circ \chi$ on all edges $vw$ with $\hat c(vw) = 00$, with $h_1 \circ \chi$ on all edges $vw$ with $\hat c(vw) = 11$ and with $h_{01}\circ \chi$ on the remaining edges.
For two graphs $G,H$, denote by $G\times H$ the graph whose vertices are all the pairs $(v,i)$ for $v\in G, i\in H$ and $(v,i)(w,j)$ is an edge if and only if $vw$ is an edge of $G$ and $ij$ is an edge of $H$.
If $vw \in \hat G$, $ij \in \hat H$ we get two different edges of $G \times H$ which, following our naming convention for edges of graphs, technically are $(v,i)(w,j)$ and $(v,j)(w,i)$. 
To have a slimmer notation however we will refer to them as $(vw,ij)$ and $(vw,ji)$ respectively.
\begin{definition}
We call a coloring $\eta\colon\widehat{G\times P_k}\to \str A$ a $k$\emph{-step cut-homotopy of colorings}, say that $\eta_0\coloneq\eta(- ,00)$ and $\eta_k\coloneq\eta(-,kk)$ are $k$\emph{-step cut-homotopic} and write $\eta_0\sim_k \eta_k$.
Again, for $k=1$, we simply say \emph{cut-homotopy} and \emph{cut-homotopic}.
\end{definition}
\begin{exam}
Suppose $\cat F$ consists of both monochromatic triangles in two colors. Figure~\ref{fig:cut-homotopy-graph-colorings} depicts a cut-homotopy between two $\cat F$-free colorings of the triangle $K_3$.
\end{exam}
\begin{figure}
    \centering
\definecolor{cbBlue}{HTML}{0072B2}
\definecolor{cbRed}{HTML}{D55E00}   
\definecolor{cbGreen}{HTML}{009E73}

\begin{tikzpicture}

\newcommand{\blackedge}[2]{\draw[color=cbBlue, line width=1.2pt] (#1) -- (#2);
}
\newcommand{\rededge}[2]{\draw[color=cbRed, line width=1.2pt] (#1) -- (#2);
}

\newcommand{\blackedged}[2]{\draw[color=cbBlue, line width=1.2pt,dashed] (#1) -- (#2);
}
\newcommand{\rededged}[2]{\draw[color=cbRed, line width=1.2pt,dashed] (#1) -- (#2);
}
\coordinate (v0a) at (-1.0,  0.8);
\coordinate (v2a) at (-0.5,  0);
\coordinate (v1a) at (-1.0, -0.8);

\coordinate (v0b) at ( 1.5,  0.8);
\coordinate (v2b) at ( 2.0,  0);
\coordinate (v1b) at ( 1.5, -0.8);

\blackedge{v0a}{v1a}
\rededge{v0a}{v2a}
\blackedge{v1a}{v2a}

\rededged{v0b}{v1b}
\blackedge{v0b}{v2b}
\rededge{v1b}{v2b}

\rededged{v1a}{v0b}
\blackedged{v0a}{v1b}
\blackedge{v0a}{v2b}
\rededge{v1a}{v2b}
\blackedge{v2a}{v0b}
\blackedge{v2a}{v1b}

\foreach \p in {v0a,v1a,v2a,v0b,v1b,v2b}{
  \fill (\p) circle (2pt);
}
\node at (3.25, 0) {$\implies$};

\coordinate (u0a) at (-1.0+5.5,  0.8);
\coordinate (u2a) at (-0.5+5.5,  0.0);
\coordinate (u1a) at (-1.0+5.5, -0.8);
\coordinate (u0b) at ( 1.5+5.5,  0.8);
\coordinate (u2b) at ( 2.0+5.5,  0.0);
\coordinate (u1b) at ( 1.5+5.5, -0.8);

\blackedge{u0a}{u1a}
\rededge{u0a}{u2a}
\blackedge{u1a}{u2a}
\rededge{u0b}{u1b}
\blackedge{u0b}{u2b}
\rededge{u1b}{u2b}
\foreach \p in {u0a,u1a,u2a,u0b,u1b,u2b}{
  \fill (\p) circle (2pt);
}

\node at (5.975, 0) {$\sim_1$};

\end{tikzpicture}
    \caption{One-step cut-homotopy without monochromatic triangles.}
    \label{fig:cut-homotopy-graph-colorings}
\end{figure}

Observe that cut-homotopies of polymorphisms induce cut-homotopies of colorings: given a graph $G$, a coloring $\chi\colon\hat G\to\str A^r$ and 
a cut-homotopy $h\colon \str A^r\times \hat P_k\to\str A$, we can define the cut-homotopy of colorings
$$
\eta\colon\widehat{G\times P_k}\to\str A,\quad (vw,ij)\mapsto h(\chi(vw),ij).
$$
Moreover, both notions of cut-homotopies are transitive: if $f,g,h$ are polymorphisms and $f\sim_k g$ and $g\sim_\ell h$, then $f\sim_{k+\ell} g$, and similarly for colorings.

\section{Reductions}
\label{sec:reductions}

The goal of the current section is to show the following statement.

\thmReduction*

\begin{wrapfigure}{r}{0.35\textwidth}
    \centering
\begin{tikzpicture}[
  >={Stealth[length=4pt,width=3pt]},
  thick
]

\coordinate (C)     at (-0.2205,  0.0000);
\coordinate (Tapex) at ( 1.3938,  0.0000);
\coordinate (Tvtop) at ( 2.3031,  0.5250);
\coordinate (Tvbot) at ( 2.3031, -0.5250);

\draw
  (1.3055,  0.1530)
  arc (120:240:0.1767)
  -- (2.2148, -0.6780)
  arc (-120:0:0.1767)
  -- (2.4798,  0.5250)
  arc (0:120:0.1767)
  -- cycle;

\foreach \p in {Tapex, Tvtop, Tvbot} {
  \fill[white] (\p) circle (4.5pt);
  \draw[gray]  (\p) circle (4.5pt);
}

\fill[white] (C) circle (4.5pt);
\draw        (C) circle (4.5pt);

\tikzset{myarrow/.style={->, shorten >=9pt, shorten <=7pt}}

\draw[myarrow] (C)
  .. controls (0.4578, 0.9582) and (1.2990, 1.1332) ..
  node[above, midway, font=\small] {$\phi_1^S$}
  (Tvtop);

\draw[myarrow] (C) --
  node[above, midway, font=\small] {$\phi_2^S$}
  (Tapex);

\draw[myarrow] (C)
  .. controls (0.4578, -0.9582) and (1.2990, -1.1332) ..
  node[above, midway, font=\small] {$\phi_3^S$}
  (Tvbot);

\node[anchor=east,  font=\normalsize] at (-0.3950,  0.0) {$G_D$};
\node[anchor=west,  font=\normalsize] at ( 2.6700,  0.0) {$G_S$};

\coordinate (Dline) at (3.4, 0.0);

\coordinate (TvtopBot) at (2.3031,  0.3669);
\coordinate (TvbotTop) at (2.3031, -0.3669);

\draw[|-|, thin] (2.3031, 0.36) -- (2.3031, -0.36)
  node[midway, left, font=\small] {$k$};
\end{tikzpicture}
\end{wrapfigure}

We will prove this by showing that for an arbitrary relational structure $\str B$ we can find an equality-free gadget reduction from $\CSP(\str B)$ to $\col(\cat F)$. 
We point out that gadget reductions are a well-studied technique for $\CSP$s~\cite{HellNesetril, PolymorphismsAndHowToUseThem}, 
our contribution lies in using cut-homotopies to find the correct gadget. 
A \emph{gadget} consists of

\begin{itemize}
    \item a graph $G_D$ for the domain;
    \item a graph $G_S$ for every relation $S$ of $\str B$;
    \item an embedding $\phi^S_i\colon G_D\to G_S$ 
    for every $r$-ary relation $S$ of $\str B$ and $i\in r$.
\end{itemize}

The gadget has \emph{girth} $k$ if $k$ is the minimal distance (number of edges in the shortest path) between the images of any two distinct embeddings $\phi_i^S$.
We say that the gadget is \emph{equality-free} if it has girth at least one.

Given a gadget, the corresponding \emph{gadget replacement} takes a $\sigma$-structure $\str X$ and constructs a graph $G_\str X$ in the following way.
For every element $x\in \str X$ introduce a copy $G_x$ of $G_D$ and for every $\bar x\in S(\str X)$ introduce a copy $G_{\bar x}$ of $G_S$. Imagine each $x$ and $\bar x$ being ``replaced'' by the respective graphs. Afterwards, whenever we have $\bar x(i) = x$ for some coordinate $i$, identify $v\in G_x$ with 
$\phi_i^S(v)\in G_{\bar x}$, see Figure~\ref{fig:eq_free_gadget_replacement}. We denote the resulting graph by $G_\str X$.
\begin{figure}
    \centering
\newcommand{\arrowlabelA}{disassemble}
\newcommand{\arrowlabelB}{replace}
\newcommand{\arrowlabelC}{identify}

\begin{tikzpicture}[scale=0.58,
  >={Stealth[length=4pt,width=3pt]},
  thick
]

\def\mygeom{
  \coordinate (S0)   at (-1.0607,  0.0000);
  \coordinate (S1)   at ( 0.0000,  1.0607);
  \coordinate (S2)   at ( 1.0607,  0.0000);
  \coordinate (S3)   at ( 0.0000, -1.0607);
  \coordinate (OutT) at (-1.4489,  1.4489);
  \coordinate (OutR) at ( 1.4489,  1.4489);
  \coordinate (OutB) at ( 1.4489, -1.4489);
  \coordinate (OutL) at (-1.4489, -1.4489);
}

\def\mygeomBig{
  \coordinate (S0)   at (-1.5910,  0.0000);
  \coordinate (S1)   at ( 0.0000,  1.5910);
  \coordinate (S2)   at ( 1.5910,  0.0000);
  \coordinate (S3)   at ( 0.0000, -1.5910);
  \coordinate (OutT) at (-2.1733,  2.1733);
  \coordinate (OutR) at ( 2.1733,  2.1733);
  \coordinate (OutB) at ( 2.1733, -2.1733);
  \coordinate (OutL) at (-2.1733, -2.1733);
  \coordinate (hTop) at (-1.2548,  1.2548);
  \coordinate (hRt)  at ( 1.2548,  1.2548);
  \coordinate (hBot) at ( 1.2548, -1.2548);
  \coordinate (hLt)  at (-1.2548, -1.2548);
}

\def\sone{0}
\def\stwo{6.80}
\def\sthree{14.80}
\def\sfour{22.80}

\begin{scope}[shift={(\sone,0)}]
  \mygeom

  \coordinate (cT_S0Out_a) at (-1.0452,  0.5218);
  \coordinate (cT_S0Out_b) at (-1.1746,  1.0047);
  \coordinate (cT_OutS1_a) at (-1.0047,  1.1746);
  \coordinate (cT_OutS1_b) at (-0.5218,  1.0452);
  \coordinate (cT_S1S0_a)  at (-0.4596,  0.8132);
  \coordinate (cT_S1S0_b)  at (-0.8132,  0.4596);

  \coordinate (cR_S1Out_a) at ( 0.5218,  1.0452);
  \coordinate (cR_S1Out_b) at ( 1.0047,  1.1746);
  \coordinate (cR_OutS2_a) at ( 1.1746,  1.0047);
  \coordinate (cR_OutS2_b) at ( 1.0452,  0.5218);
  \coordinate (cR_S2S1_a)  at ( 0.8132,  0.4596);
  \coordinate (cR_S2S1_b)  at ( 0.4596,  0.8132);

  \coordinate (cB_S2Out_a) at ( 1.0452, -0.5218);
  \coordinate (cB_S2Out_b) at ( 1.1746, -1.0047);
  \coordinate (cB_OutS3_a) at ( 1.0047, -1.1746);
  \coordinate (cB_OutS3_b) at ( 0.5218, -1.0452);
  \coordinate (cB_S3S2_a)  at ( 0.4596, -0.8132);
  \coordinate (cB_S3S2_b)  at ( 0.8132, -0.4596);

  \coordinate (cL_S3Out_a) at (-0.5218, -1.0452);
  \coordinate (cL_S3Out_b) at (-1.0047, -1.1746);
  \coordinate (cL_OutS0_a) at (-1.1746, -1.0047);
  \coordinate (cL_OutS0_b) at (-1.0452, -0.5218);
  \coordinate (cL_S0S3_a)  at (-0.8132, -0.4596);
  \coordinate (cL_S0S3_b)  at (-0.4596, -0.8132);

  \fill[pattern=north east lines]
    (S0) .. controls (cT_S0Out_a) and (cT_S0Out_b) .. (OutT)
         .. controls (cT_OutS1_a) and (cT_OutS1_b) .. (S1)
         .. controls (cT_S1S0_a)  and (cT_S1S0_b)  .. cycle;
  \fill[pattern=north east lines]
    (S1) .. controls (cR_S1Out_a) and (cR_S1Out_b) .. (OutR)
         .. controls (cR_OutS2_a) and (cR_OutS2_b) .. (S2)
         .. controls (cR_S2S1_a)  and (cR_S2S1_b)  .. cycle;
  \fill[pattern=north east lines]
    (S2) .. controls (cB_S2Out_a) and (cB_S2Out_b) .. (OutB)
         .. controls (cB_OutS3_a) and (cB_OutS3_b) .. (S3)
         .. controls (cB_S3S2_a)  and (cB_S3S2_b)  .. cycle;
  \fill[pattern=north east lines]
    (S3) .. controls (cL_S3Out_a) and (cL_S3Out_b) .. (OutL)
         .. controls (cL_OutS0_a) and (cL_OutS0_b) .. (S0)
         .. controls (cL_S0S3_a)  and (cL_S0S3_b)  .. cycle;

  \draw (S0) .. controls (cT_S0Out_a) and (cT_S0Out_b) .. (OutT)
             .. controls (cT_OutS1_a) and (cT_OutS1_b) .. (S1)
             .. controls (cT_S1S0_a)  and (cT_S1S0_b)  .. cycle;
  \draw (S1) .. controls (cR_S1Out_a) and (cR_S1Out_b) .. (OutR)
             .. controls (cR_OutS2_a) and (cR_OutS2_b) .. (S2)
             .. controls (cR_S2S1_a)  and (cR_S2S1_b)  .. cycle;
  \draw (S2) .. controls (cB_S2Out_a) and (cB_S2Out_b) .. (OutB)
             .. controls (cB_OutS3_a) and (cB_OutS3_b) .. (S3)
             .. controls (cB_S3S2_a)  and (cB_S3S2_b)  .. cycle;
  \draw (S3) .. controls (cL_S3Out_a) and (cL_S3Out_b) .. (OutL)
             .. controls (cL_OutS0_a) and (cL_OutS0_b) .. (S0)
             .. controls (cL_S0S3_a)  and (cL_S0S3_b)  .. cycle;

  \foreach \p in {S0,S1,S2,S3,OutT,OutR,OutB,OutL} { \fill (\p) circle (3pt); }
\end{scope}

\draw[->, decorate, decoration={snake, amplitude=1.5pt, segment length=5pt, post length=4pt}]
  (2.50, 0.0) -- (4.30, 0.0);
\node[font=\normalsize] at (3.40, 0.5) {\arrowlabelA};

\begin{scope}[shift={(\stwo,0)}]
  \mygeomBig

  \coordinate (vS0)   at (S0);
  \coordinate (vS1)   at (S1);
  \coordinate (vS2)   at (S2);
  \coordinate (vS3)   at (S3);
  \coordinate (vOutT) at (OutT);
  \coordinate (vOutR) at (OutR);
  \coordinate (vOutB) at (OutB);
  \coordinate (vOutL) at (OutL);

  \coordinate (tTbl) at (-1.4117,  0.6692);
  \coordinate (tTbr) at (-0.6692,  1.4117);
  \coordinate (tTap) at (-1.6835,  1.6835);

  \coordinate (tRtl) at ( 0.6692,  1.4117);
  \coordinate (tRbl) at ( 1.4117,  0.6692);
  \coordinate (tRap) at ( 1.6835,  1.6835);

  \coordinate (tBtl) at ( 0.6692, -1.4117);
  \coordinate (tBtr) at ( 1.4117, -0.6692);
  \coordinate (tBap) at ( 1.6835, -1.6835);

  \coordinate (tLtr) at (-1.4117, -0.6692);
  \coordinate (tLbr) at (-0.6692, -1.4117);
  \coordinate (tLap) at (-1.6835, -1.6835);

  \coordinate (csT_blbr_a) at (-1.2384,  0.9909);
  \coordinate (csT_blbr_b) at (-0.9909,  1.2384);
  \coordinate (csT_brap_a) at (-1.0345,  1.4009);
  \coordinate (csT_brap_b) at (-1.3726,  1.4915);
  \coordinate (csT_apbl_a) at (-1.4915,  1.3726);
  \coordinate (csT_apbl_b) at (-1.4009,  1.0345);

  \coordinate (csR_blbr_a) at ( 0.9909,  1.2384);
  \coordinate (csR_blbr_b) at ( 1.2384,  0.9909);
  \coordinate (csR_brap_a) at ( 1.4009,  1.0345);
  \coordinate (csR_brap_b) at ( 1.4915,  1.3726);
  \coordinate (csR_apbl_a) at ( 1.3726,  1.4915);
  \coordinate (csR_apbl_b) at ( 1.0345,  1.4009);

  \coordinate (csB_blbr_a) at ( 0.9909, -1.2384);
  \coordinate (csB_blbr_b) at ( 1.2384, -0.9909);
  \coordinate (csB_brap_a) at ( 1.4009, -1.0345);
  \coordinate (csB_brap_b) at ( 1.4915, -1.3726);
  \coordinate (csB_apbl_a) at ( 1.3726, -1.4915);
  \coordinate (csB_apbl_b) at ( 1.0345, -1.4009);

  \coordinate (csL_blbr_a) at (-1.2384, -0.9909);
  \coordinate (csL_blbr_b) at (-0.9909, -1.2384);
  \coordinate (csL_brap_a) at (-1.0345, -1.4009);
  \coordinate (csL_brap_b) at (-1.3726, -1.4915);
  \coordinate (csL_apbl_a) at (-1.4915, -1.3726);
  \coordinate (csL_apbl_b) at (-1.4009, -1.0345);

  \fill[pattern=north east lines]
    (tTbl) .. controls (csT_blbr_a) and (csT_blbr_b) .. (tTbr)
           .. controls (csT_brap_a) and (csT_brap_b) .. (tTap)
           .. controls (csT_apbl_a) and (csT_apbl_b) .. cycle;
  \fill[pattern=north east lines]
    (tRtl) .. controls (csR_blbr_a) and (csR_blbr_b) .. (tRbl)
           .. controls (csR_brap_a) and (csR_brap_b) .. (tRap)
           .. controls (csR_apbl_a) and (csR_apbl_b) .. cycle;
  \fill[pattern=north east lines]
    (tBtl) .. controls (csB_blbr_a) and (csB_blbr_b) .. (tBtr)
           .. controls (csB_brap_a) and (csB_brap_b) .. (tBap)
           .. controls (csB_apbl_a) and (csB_apbl_b) .. cycle;
  \fill[pattern=north east lines]
    (tLtr) .. controls (csL_blbr_a) and (csL_blbr_b) .. (tLbr)
           .. controls (csL_brap_a) and (csL_brap_b) .. (tLap)
           .. controls (csL_apbl_a) and (csL_apbl_b) .. cycle;

  \draw (tTbl) .. controls (csT_blbr_a) and (csT_blbr_b) .. (tTbr)
               .. controls (csT_brap_a) and (csT_brap_b) .. (tTap)
               .. controls (csT_apbl_a) and (csT_apbl_b) .. cycle;
  \draw (tRtl) .. controls (csR_blbr_a) and (csR_blbr_b) .. (tRbl)
               .. controls (csR_brap_a) and (csR_brap_b) .. (tRap)
               .. controls (csR_apbl_a) and (csR_apbl_b) .. cycle;
  \draw (tBtl) .. controls (csB_blbr_a) and (csB_blbr_b) .. (tBtr)
               .. controls (csB_brap_a) and (csB_brap_b) .. (tBap)
               .. controls (csB_apbl_a) and (csB_apbl_b) .. cycle;
  \draw (tLtr) .. controls (csL_blbr_a) and (csL_blbr_b) .. (tLbr)
               .. controls (csL_brap_a) and (csL_brap_b) .. (tLap)
               .. controls (csL_apbl_a) and (csL_apbl_b) .. cycle;

  \foreach \p in {vS0,vS1,vS2,vS3,vOutT,vOutR,vOutB,vOutL,
                  tTbl,tTbr,tTap,
                  tRtl,tRbl,tRap,
                  tBtl,tBtr,tBap,
                  tLtr,tLbr,tLap} {
    \fill (\p) circle (3pt);
  }

  \tikzset{myarrow/.style={->, shorten >=5pt, shorten <=5pt}}

  \draw[myarrow] (vS0)   -- (tTbl);
  \draw[myarrow] (vS1)   -- (tTbr);
  \draw[myarrow] (vOutT) -- (tTap);

  \draw[myarrow] (vS1)   -- (tRtl);
  \draw[myarrow] (vS2)   -- (tRbl);
  \draw[myarrow] (vOutR) -- (tRap);

  \draw[myarrow] (vS2)   -- (tBtr);
  \draw[myarrow] (vS3)   -- (tBtl);
  \draw[myarrow] (vOutB) -- (tBap);

  \draw[myarrow] (vS3)   -- (tLbr);
  \draw[myarrow] (vS0)   -- (tLtr);
  \draw[myarrow] (vOutL) -- (tLap);

\end{scope}

\draw[->, decorate, decoration={snake, amplitude=1.5pt, segment length=5pt, post length=4pt}]
  (9.71, 0.0) -- (11.51, 0.0);
\node[font=\normalsize] at (10.61, 0.5) {\arrowlabelB};
 
\begin{scope}[shift={(\sthree,0)}]
  \mygeomBig

  \coordinate (vS0)   at (S0);
  \coordinate (vS1)   at (S1);
  \coordinate (vS2)   at (S2);
  \coordinate (vS3)   at (S3);
  \coordinate (vOutT) at (OutT);
  \coordinate (vOutR) at (OutR);
  \coordinate (vOutB) at (OutB);
  \coordinate (vOutL) at (OutL);

  \coordinate (tTbl) at (-1.4117,  0.6692);
  \coordinate (tTbr) at (-0.6692,  1.4117);
  \coordinate (tTap) at (-1.6835,  1.6835);

  \coordinate (tRtl) at ( 0.6692,  1.4117);
  \coordinate (tRbl) at ( 1.4117,  0.6692);
  \coordinate (tRap) at ( 1.6835,  1.6835);

  \coordinate (tBtl) at ( 0.6692, -1.4117);
  \coordinate (tBtr) at ( 1.4117, -0.6692);
  \coordinate (tBap) at ( 1.6835, -1.6835);

  \coordinate (tLtr) at (-1.4117, -0.6692);
  \coordinate (tLbr) at (-0.6692, -1.4117);
  \coordinate (tLap) at (-1.6835, -1.6835);

  \draw
    (-1.2867,0.5442)
    -- (-0.5442,1.2867)
    arc (-45.00:75.00:0.1767)
    -- (-1.6378,1.8542)
    arc (75.00:195.00:0.1767)
    -- (-1.5824,0.6235)
    arc (-165.00:-45.00:0.1767)
    -- cycle;

  \draw
    (0.5442,1.2867)
    -- (1.2867,0.5442)
    arc (-135.00:-15.00:0.1767)
    -- (1.8542,1.6378)
    arc (-15.00:105.00:0.1767)
    -- (0.6235,1.5824)
    arc (105.00:225.00:0.1767)
    -- cycle;

  \draw
    (0.6235,-1.5824)
    -- (1.6378,-1.8542)
    arc (-105.00:15.00:0.1767)
    -- (1.5824,-0.6235)
    arc (15.00:135.00:0.1767)
    -- (0.5442,-1.2867)
    arc (135.00:255.00:0.1767)
    -- cycle;

  \draw
    (-1.5824,-0.6235)
    -- (-1.8542,-1.6378)
    arc (165.00:285.00:0.1767)
    -- (-0.6235,-1.5824)
    arc (-75.00:45.00:0.1767)
    -- (-1.2867,-0.5442)
    arc (45.00:165.00:0.1767)
    -- cycle;

  \foreach \p in {tTbl,tTbr,tTap,
                  tRtl,tRbl,tRap,
                  tBtl,tBtr,tBap,
                  tLtr,tLbr,tLap} {
    \fill[white] (\p) circle (4.5pt);
    \draw[gray] (\p) circle (4.5pt);
  }

  \foreach \p in {vS0,vS1,vS2,vS3,vOutT,vOutR,vOutB,vOutL} {
    \fill[white] (\p) circle (4.5pt);
    \draw (\p) circle (4.5pt);
  }

  \tikzset{myarrow/.style={->, shorten >=5pt, shorten <=5pt}}

  \draw[myarrow] (vS0)   -- (tTbl);
  \draw[myarrow] (vS1)   -- (tTbr);
  \draw[myarrow] (vOutT) -- (tTap);

  \draw[myarrow] (vS1)   -- (tRtl);
  \draw[myarrow] (vS2)   -- (tRbl);
  \draw[myarrow] (vOutR) -- (tRap);

  \draw[myarrow] (vS2)   -- (tBtr);
  \draw[myarrow] (vS3)   -- (tBtl);
  \draw[myarrow] (vOutB) -- (tBap);

  \draw[myarrow] (vS3)   -- (tLbr);
  \draw[myarrow] (vS0)   -- (tLtr);
  \draw[myarrow] (vOutL) -- (tLap);

\end{scope}

\draw[->, decorate, decoration={snake, amplitude=1.5pt, segment length=5pt, post length=4pt}]
  (18.20, 0.0) -- (20.00, 0.0);
\node[font=\normalsize] at (19.10, 0.5) {\arrowlabelC};

\begin{scope}[shift={(\sfour,0)}]
  \mygeom

  \draw
    (-0.9357,-0.1250)
    -- ( 0.1250, 0.9357)
    arc (-45.00:75.00:0.1767)
    -- (-1.4032, 1.6196)
    arc (75.00:195.00:0.1767)
    -- (-1.2314,-0.0457)
    arc (-165.00:-45.00:0.1767)
    -- cycle;

  \draw
    (-0.1250, 0.9357)
    -- ( 0.9357,-0.1250)
    arc (-135.00:-15.00:0.1767)
    -- ( 1.6196, 1.4032)
    arc (-15.00:105.00:0.1767)
    -- (-0.0457, 1.2314)
    arc (105.00:225.00:0.1767)
    -- cycle;

  \draw
    ( 0.9357, 0.1250)
    -- (-0.1250,-0.9357)
    arc (135.00:255.00:0.1767)
    -- ( 1.4032,-1.6196)
    arc (-105.00:15.00:0.1767)
    -- ( 1.2314, 0.0457)
    arc (15.00:135.00:0.1767)
    -- cycle;

  \draw
    ( 0.1250,-0.9357)
    -- (-0.9357, 0.1250)
    arc (45.00:165.00:0.1767)
    -- (-1.6196,-1.4032)
    arc (165.00:285.00:0.1767)
    -- ( 0.0457,-1.2314)
    arc (-75.00:45.00:0.1767)
    -- cycle;

  \foreach \p in {S0,S1,S2,S3,OutT,OutR,OutB,OutL} {
    \fill[white] (\p) circle (4.5pt);
    \draw[gray]  (\p) circle (4.5pt);
  }

\end{scope}

\end{tikzpicture}
    \caption{If the girth is high, no new cliques are created.}
    \label{fig:eq_free_gadget_replacement}
\end{figure}
Observe that if the girth of the gadget is at least four, the image of every homomorphism from a clique to $G_\str X$ is fully contained in a copy $G_x$ of $G_D$ or in a copy $G_{\bar{x}}$ of $G_S$.
To find gadgets, we will employ the following two propositions that relate cut-homotopies of polymorphisms to cut-homotopies of colorings. They will be proved in Section~\ref{sec:ramseysToolbox}.

\begin{restatable}{proposition}{propGadgetOne}
\label{prop:homotopy_witness}
    For any two numbers $k$ and $n$ there is a graph $G$ and a coloring $\chi\colon\hat G\to\str A^n$ with the following property: any two polymorphisms $f,g\colon \str A^n\to \str A$ are $k$-step cut-homotopic whenever the colorings $f\circ \chi$ and $g\circ \chi$ are $k$-step cut-homotopic:
    $$ 
    f\circ \chi \sim_k g\circ\chi \Longrightarrow f\sim_k g
    $$
\end{restatable}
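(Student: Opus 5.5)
The plan is to take $G$ to be a sufficiently large Ramsey object for ordered complete graphs whose edges are labelled by elements of $A^n$, with $\chi$ the labelling. Given a $k$-step cut-homotopy $\eta\colon\widehat{G\times P_k}\to\str A$ between $f\circ\chi$ and $g\circ\chi$, we use the Ramsey property to find a copy of a smaller universal object on which $\eta$ depends only on the label $\chi(vw)$ and on the pair $ij$. That dependence is then read off as a cut-homotopy $h\colon\str A^n\times\hat P_k\to\str A$.

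\textbf{Step 1: the universal object $H$.} Let $m$ be the largest clique size occurring in $\cat F$. Let $H$ be a finite linearly ordered complete graph with edges labelled by $A^n$ such that every labelling of an ordered $K_{m'}$, $m'\le m$, embeds into $H$ in an order- and label-preserving way. A disjoint union of all such ordered labelled cliques, completed arbitrarily and ordered, works. In particular, every $a\in A^n$ occurs as a label in $H$.

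\textbf{Step 2: the Ramsey object $G$.} Ordered complete graphs with edges labelled by $A^n$ form a Ramsey class (Nešetřil--Rödl / Abramson--Harrington). So there is an ordered $A^n$-labelled complete graph $G$ with the following property: for every colouring of its edges with $N$ colours there is an embedded copy of $H$ on which the colour of an edge depends only on its label. Here $N=|A|^{(k+1)+2k}$ counts the tuples of values of $\eta$ at the positions $ij\in\{00,\dots,kk\}$ and $(i,i+1),(i+1,i)$. Set $\chi$ to be this labelling.

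For an edge $vw$ of $G$ with $v<w$, colour it by the tuple $\bigl(\eta((v,i)(w,j))\bigr)_{ij}$, where $ij$ ranges over the loops and over both orientations of each step $i(i+1)$. This uses the order to fix the orientation. Take a monochromatic-by-label copy $H'\cong H$ in $G$. Then define $h(a,ii)=\eta((v,i)(w,i))$ and $h(a,i(i+1))=\eta((v,i)(w,i+1))$ for any $v<w$ in $H'$ with $\chi(vw)=a$. Since $\eta(-,00)=f\circ\chi$, we get $h_0=f$, and likewise $h_k=g$.

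\textbf{Step 3: $h$ is a homomorphism $\str A^n\times\hat P_k\to\str A$.} A tuple in the relation $R_{m'}$ of $\str A^n\times\hat P_k$ is given by an $A^n$-colouring $\psi$ of $K_{m'}$ together with a homomorphism $\varphi\colon K_{m'}\to P_k$. The image of $\varphi$ lies in $\{i,i+1\}$ for some $i$. Order the vertices of $K_{m'}$ so that $\varphi^{-1}(i)$ precedes $\varphi^{-1}(i+1)$, and embed this ordered labelled clique into $H'$ via some $\iota$, using Step 1. The vertices $(\iota(u),\varphi(u))$ then form a clique in $G\times P_k$. Because of the chosen ordering, every cross edge is of the form $(v,i)(w,i+1)$ with $v<w$, which is exactly the orientation used to define $h$. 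Hence $h\circ(\psi,\hat\varphi)$ coincides with $\eta$ restricted to this clique, which is $\cat F$-free. Therefore $h$ is a $k$-step cut-homotopy from $f$ to $g$.

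The main obstacle is orientation. $\hat P_k$ identifies $i(i+1)$ with $(i+1)i$, while $G\times P_k$ has two distinct edges over each $(vw,i(i+1))$, and these may receive different colours. Ordering the Ramsey object and embedding cliques compatibly with the cut resolves this, so the substantive input is the Ramsey theorem for ordered edge-labelled complete graphs.
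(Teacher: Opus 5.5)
\textbf{The gap: $\chi$ is not a valid coloring.} In the paper's convention a coloring $\chi\colon\hat G\to\str A^n$ is a homomorphism, so every coordinate of $\chi$ must be an $\cat F$-free coloring of $G$. This requirement matters. The converse implication stated after the proposition depends on it, and so does the use of $(G_D,\chi_D)$ in the proof of Theorem~\ref{thm:reduction}: completeness needs it, and so does feeding $\pi_i^*\circ\chi_D$ into Proposition~\ref{prop:homotopy_witness2}.

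Your construction violates this requirement in three ways:
\begin{itemize}
\item your $H$ contains \emph{every} labelling of $K_{m'}$, including ones with a member of $\cat F$ in some coordinate;
\item you complete $H$ arbitrarily;
\item you take $G$ to be a Ramsey witness in the class of all $A^n$-labelled ordered complete graphs.
\end{itemize}
So $\chi$ is in general not a homomorphism. For instance, $\pi_1\circ\chi$ then contains a forbidden clique, so no cut-homotopy of colorings starts at it, and your implication holds only vacuously there. You have verified the property for an object the proposition does not allow.

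This cannot be repaired while keeping $G$ complete. Take $\cat F$ to be the monochromatic triangles in two colours. A valid $\chi$ forces $|G|\le 5$, since $R(3,3)=6$. On the other hand, all $2^n$ labels in $\{0,1\}^n$ are constrained and must occur in $H$. This is impossible once $2^n>\binom{5}{2}$, i.e.\ for $n\ge 4$.

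\textbf{How to repair it.} Work in the paper's category $\cat C$ of ordered graphs equipped with homomorphisms $\hat G\to\str A^n$. It is Ramsey by Ne\v{s}et\v{r}il--R\"odl precisely because the forbidden configurations are cliques. This is where that hypothesis really enters, and your write-up bypasses it. Concretely:
\begin{itemize}
\item Take $H$ to be the disjoint union, without completion, of all ordered $K_{m'}$ with $m'\le m$ that carry a homomorphism $\hat K_{m'}\to\str A^n$.
\item Obtain a copy of $H$ on which the colour of an edge depends only on its label by applying the Ramsey property successively to the finitely many one-edge objects $(K_2,\,01\mapsto a,\,0<1)$.
\item A label $a$ that does not occur in $H$ appears in no tuple of any relation of $\str A^n\times\hat P_k$, so it is unconstrained. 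For such $a$, set $h(a,00)=f(a)$ and $h(a,kk)=g(a)$, and choose the remaining values arbitrarily.
\end{itemize}
With these changes your Steps 2 and 3 go through verbatim, since Step 3 only embeds homomorphically labelled cliques.

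The repaired argument is a genuinely different route from the paper's. The paper proves Lemma~\ref{lem:homotopyFactory} via the compactness result Theorem~\ref{thm:konigramsey}. It then takes a disjoint union of witnesses $G_{fg}$ over all pairs $(f,g)$ that are not $k$-step cut-homotopic. Your argument instead gives a single explicit finite Ramsey witness that turns any cut-homotopy of colorings into a cut-homotopy of polymorphisms.
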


The converse implication holds for all $G$ and $\chi$.

\begin{restatable}{proposition}{lemRamsey}
\label{prop:homotopy_witness2}
    Fix a number $n$, a finite number of graphs
    $G_i$ and colorings $\chi_i\colon \hat G_i\to \str A^n$, for $i \in r$. There exists a graph $H$, a coloring $\chi\colon \hat H\to \str A^n$ and embeddings $\phi_i\colon G_i\to H$ with $\chi\circ\hat\phi_i = \chi_i$ for all $i \in r$, such that for every coloring $\eta\colon\hat H\to \str A$, there is a polymorphism $f\colon \str A^n\to \str A$ and $2$-step cut-homotopies
    $$
    f\circ \chi_i \sim_2 \eta\circ\hat\phi_i
    $$
    for all $i \in r$.
    Moreover, the distance between the images of any two distinct $\phi_i$ is at least four.
\end{restatable}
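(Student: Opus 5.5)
We prove Proposition~\ref{prop:homotopy_witness2} by a Ramsey-type product argument. The guiding idea: if $H$ is built so that every coloring $\eta\colon\hat H\to\str A$ is \emph{canonical} on a large substructure, meaning $\eta$ depends only on the value of $\chi$, then on that substructure $\eta$ factors as $f\circ\chi$ for some map $f\colon A^n\to A$.

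\textbf{Choosing $f$.} Let $G_0$ be the disjoint union of the $G_i$ with the coloring $\chi_0$ induced by the $\chi_i$. Enlarge $G_0$ by a universal gadget $U$: a graph with a coloring $\chi_U\colon\hat U\to\str A^n$ containing, for every $n$-tuple of $\cat F$-free colorings of every clique $K_m$ occurring in $\cat F$, a copy of $K_m$ colored by that tuple. Now apply a structural Ramsey theorem for edge-colored graphs, in a Nešetřil--Rödl style partite (amalgamation) construction that controls girth. This yields $H\supseteq$ copies of the pieces, with $\chi\colon\hat H\to\str A^n$, such that for every $\eta\colon\hat H\to\str A$ there is an embedding $\psi$ of a large blow-up into $H$ on which $\eta\circ\hat\psi$ depends only on $\chi$-values. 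Define $f(a)$ to be this common color on edges of $\chi$-color $a$. Since $\eta$ is $\cat F$-free on the copy of $U$, which realizes every tuple of cliques, $f\circ\chi$ is $\cat F$-free on all tuples. Hence $f$ is a polymorphism.

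\textbf{The $2$-step cut-homotopy.} The canonical copy $\psi(G_i)$ need not coincide with the prescribed embedding $\phi_i$. So we build $H$ so that each $\phi_i(G_i)$ is joined to its canonical copy $\psi(G_i)$ through an intermediate copy, in the shape of $G_i\times P_2$. Concretely, we ensure $H$ contains, for each $i$, a graph isomorphic to $G_i\times P_2$ whose $0$-layer is $\phi_i$ and whose $2$-layer is a copy where $\eta$ agrees with $f\circ\chi_i$. Restricting $\eta$ to this subgraph gives a coloring of $\widehat{G_i\times P_2}$, which is exactly the required $2$-step cut-homotopy $f\circ\chi_i\sim_2\eta\circ\hat\phi_i$.

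Two steps are needed, not one. The first step passes from $\phi_i$ to a ``generic'' copy lying in the Ramsey-homogeneous part. The second passes from there to the canonical copy, with the middle layer absorbing the non-canonical cross edges. To make this work, we take $H$ to be an iterated product/amalgam. First form $G_i\times P_2$ with $\chi$ extended by $(vw,jl)\mapsto\chi_i(vw)$. Then apply the Ramsey construction to the $2$-layer and middle layers, keeping the $0$-layer copies fixed and pairwise far apart. The distance-$\geq4$ condition is arranged by attaching the $\phi_i$ on disjoint branches, using the high-girth version of the partite construction.

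\textbf{Main obstacle.} The hardest step is the Ramsey step. We need an Ramsey (canonization) theorem for graphs with $\str A^n$-edge-colorings that simultaneously
\begin{itemize}
\item forces all layers of the product $G_i\times P_2$ to be canonized consistently, since the cross edges $(vw,01)$ must also receive colors forming valid cliques;
\item preserves the prescribed large distances.
\end{itemize}
Our first attempt is to run the Nešetřil--Rödl partite construction with the parts indexed by layers of $P_2$. If that fails, we fall back on the Hales--Jewett theorem applied to $A^n$-indexed coordinates.
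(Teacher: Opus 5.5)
Your first step is fine, and it is a legitimate, more hands-on substitute for the paper's compactness argument. Iterating the Ramsey property of $\cat C$ over the finitely many edge types $a\in A^n$ gives a witness $J$ such that every $\eta$ is constant on each $\chi$-color class of some copy of $G_0\sqcup U$. The universal gadget $U$ then certifies that the $f$ read off from that copy is a polymorphism (define $f$ arbitrarily on tuples not occurring in the copy). This amounts to proving Lemma~\ref{lem:ramsey} with explicit iterated Ramsey witnesses instead of Theorem~\ref{thm:konigramsey}, and no girth control is needed there.

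\textbf{The gap is the second step.} The canonical copy $\psi$ depends on $\eta$, whereas $H$, the embeddings $\phi_i$ and all bridges must be fixed before $\eta$ is given. So ``we ensure $H$ contains a $G_i\times P_2$ whose $2$-layer is a copy where $\eta$ agrees with $f\circ\chi_i$'' restates the goal rather than giving a construction. The layered canonization theorem you then look for is unproved and aimed at a non-issue.

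The missing idea is to attach bridges to \emph{all} candidate copies at once:
\begin{itemize}
\item Take $J$ for $G_\bullet=\bigsqcup_i G_i$, together with a separate copy of $G_\bullet$; this copy provides the $\phi_i$.
\item For every embedding $\iota\colon G_\bullet\to J$ with $\xi\circ\hat\iota=\chi_\bullet$, glue in a copy of $G_\bullet\times P_2$, colored by $(vw,ij)\mapsto\chi_\bullet(vw)$, with layer $0$ identified with $G_\bullet$ and layer $2$ with $\iota(G_\bullet)$.
\item Given $\eta$, take the $\iota$ and $f$ produced by the Ramsey step. The restriction of $\eta$ to that bridge is automatically a $2$-step cut-homotopy between $\eta\circ\hat\phi_i$ and $f\circ\chi_i$, simply because $\eta$ is already a valid coloring of $H$. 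The middle layer and the cross edges need no canonization at all.
\end{itemize}

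The real reason for two steps is that middle vertices of a bridge are adjacent only to vertices of that same bridge. Hence every clique of $H$ lies in $J$ or in a single bridge, which is exactly what makes $\chi$ a valid coloring of $H$. With $1$-step bridges, a vertex of $G_\bullet$ would be adjacent to several overlapping copies inside $J$; this can create cliques contained neither in $J$ nor in one bridge, and distinct $G_i$ could end up at distance $2$.

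The distance bound is then immediate: any path between different $G_i$ must cross a middle layer, pass through $J$, and cross another middle layer. No high-girth partite construction is required.
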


\begin{proof}[Proof of Theorem~\ref{thm:reduction}]
Take a minion homomorphism $\alpha\colon \pol(\str A)\to \id$ that collapses cut-homotopies. For any finite relational structure $\str B$, we will find a gadget replacement that reduces $\CSP(\str B)$ to $\col(\cat F)$.
Let $G_D$ be the graph obtained from Proposition~\ref{prop:homotopy_witness}, for $k=4$ and $n=B$. Denote the obtained coloring by $\chi_D\colon \hat G_D\to\str A^B$.

Given a relation $S \subseteq B^r$ of $\mathbb B$ with arity $r$ we define a graph $G_S$ in the following way. For every coordinate $i \in r$, let $\chi_i\colon \hat G_D\to\str A^S$ be the coloring defined by
$$
\chi_i = \pi_i^*\circ \chi_D
$$
where $\pi_i\colon S\to B$ is the $i$-th projection and $\pi_i^*$ denotes the precomposition map $\str A^B\to\str A^S$, $a\mapsto a\circ\pi_i$.
Applying Proposition~\ref{prop:homotopy_witness2} with $n=S$ to these colorings, we obtain the graph $G_S$, embeddings $\phi_i^S\colon G_D\to G_S$ for all $i \in r$ and a coloring $\chi_S\colon \hat G_S\to \str A^S$. By the proposition, the girth of this gadget is at least four, we show that it yields a reduction.

First we show completeness, that is, whenever $\str X$ is a YES-instance of $\CSP(\str B)$ then $G_\str X$ is a YES-instance of $\col(\cat F)$. Indeed, given a homomorphism $f\colon \str X\to \str B$, we  construct a coloring $\eta\colon\hat G_\str X\to \str A$ as follows. 
Whenever an edge $vw\in\hat G_\str X$ is contained in a copy $G_x$ of $G_D$, we take the function $\chi_D(vw)\in \str A^B$ and evaluate at $f(x)$ to obtain the color $vw$. Similarly, if $vw$ is contained in a copy $G_{\bar x}$ of some $G_S$, we take $\chi_S(vw)$ and evaluate at $(f\circ\bar x)$. 
$$
\eta(vw) = 
\begin{cases}
\chi_D(vw)(f(x)) &\text{if } v,w \in G_x  \\
\chi_S(vw)(f\circ \bar x) &\text{if } v,w\in G_{\bar x}
\end{cases}
$$
Note that every edge $vw$ is contained in some $G_x$ or $G_{\bar x}$ and the condition $\pi_i^*\circ\chi_D=\chi_i = \chi_S\circ\hat\phi^S_i$ ensures that $\eta$ is well-defined. Moreover, since the girth of the gadget is at least four, every clique in $G_\str X$ is fully contained in one of the $G_x$ or $G_{\bar x}$, hence the coloring $\eta$ is $\cat F$-free if and only if $\chi_D, \chi_S$ are, and they are $\cat F$-free at each coordinate, by definition.
Next we show soundness, that is, whenever $ G_\str X$ is a YES-instance of $\col(\cat F)$ then $\str X$ is a YES-instance of $\CSP(\str B)$.
Indeed, given a coloring $\eta\colon\hat G_\str X\to \str A$, we show the existence of a homomorphism $g\colon \str X\to \str B$ as follows.
Let $g\colon X\to B$ be any map that satisfies $g(x)=\alpha(f)$, whenever there is a polymorphism $f: \str A^B \rightarrow \str A$ such that $\eta_x\sim_2 f\circ\chi_D$, where $\eta_x$ indicates the restriction of $\eta$ to $G_x$. Such a map exists, since 
whenever two polymorphisms $f$ and $f'$ satisfy the above condition, we get $f'\circ \chi_D \sim_4 f\circ\chi_D$, which, by the definition of $G_D$, implies $f\sim_4 f'$ and hence $\alpha(f)=\alpha(f')$, by assumption.
We prove that $g$ describes a homomorphism. For this purpose we take  $\bar x\in S(\str X)$ and we show that $g\circ\bar x \in S$.
Let $\eta_{\bar x}$ be the restriction of $\eta$ to $G_{\bar x}$.
By the definition of $G_S$, there is a polymorphism $f\colon\str A^S\to\str A$ and 
2-step cut-homotopies 
$$
 f\circ\chi_i\sim_2\eta_{\bar x}\circ\hat\phi_i
$$
for all $i \in r$. Moreover, by definition, $f\circ \chi_i =f\circ\pi_i^*\circ \chi_D$ and $\eta_{\bar x}\circ\hat\phi^S_i = \eta_{\bar x(i)}$, so the function $i\mapsto 
g(\bar x(i))$ indeed belongs to $S$ since $\alpha(f)\in S$ and 
$$
g(\bar x(i)) = 
\alpha(f\circ\pi_i^*)= \pi_i(\alpha(f)).
$$
The first equality holds by the definition of $g$, and the second one follows from the fact that $\alpha$ is a minion homomorphism.
\end{proof}

As usual with gadget reductions, the minion homomorphism $\alpha$ was used only to guarantee soundness. For completeness, we used the gadget's girth and the existence of certain colorings.
We remark that, using essentially the same proof, one can show a slight refinement of Theorem~\ref{thm:reduction}: If there is a minion homomorphism $\pol\str A\to\pol\str B$ that collapses cut-homotopies, then there is an efficient reduction from $\CSP(\str B)$ to $\col(\cat F)$.

\section{There are no weird homotopies!}
\label{sec:therearenoweirdhomotopies}
This section is devoted to the proof of the following theorem.

\begin{restatable}{theorem}{noHomotopies}
\label{thm:noHomotopies}
    If there is a minion homomorphism $\pol(\str A) \to \id$, then there is a minion homomorphism $\pol(\str A)\to \id$ that collapses cut-homotopies.
\end{restatable}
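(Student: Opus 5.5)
The plan is to choose the minion homomorphism $\alpha\colon\pol(\str A)\to\id$ in a canonical way and then check directly that this particular $\alpha$ cannot separate $h_0$ from $h_1$. An arbitrary $\alpha$ need not collapse cut-homotopies, so the choice is essential.

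\textbf{Step 1: reduce to a core with constants.} First I replace $\str A$ by its core $\str A'$, with retraction $e\colon\str A\to\str A'$, and add all singleton unary relations. The minion $\pol(\str A)$ maps to $\pol(\str A')$ via $f\mapsto e\circ f|_{A'}$, and $\pol$ of the expansion sits inside $\pol(\str A')$. I must check two things about this reduction:
\begin{itemize}
\item Cut-homotopies survive it. If $h\colon\str A^r\times\hat P_1\to\str A$, then $e\circ h$ restricted to $A'^r\times\hat P_1$ is again a cut-homotopy, because relations of $\hat P_1$ only involve edge-labels of cliques mapped into $P_1$.
\item Idempotence can be arranged along a cut-homotopy. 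If $h_0,h_1$ are idempotent, I precompose $h$ with a unary polymorphism to make the whole homotopy idempotent on the diagonal. Failing that, I simply work with the minion of idempotent polymorphisms restricted to the core. This is the standard route to the fact that a minion homomorphism to $\id$ exists for $\pol(\str A)$ iff it exists for the idempotent core.
\end{itemize}
It then suffices to produce a cut-homotopy-collapsing $\alpha$ on this idempotent minion and compose.

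\textbf{Step 2: define $\alpha$ from a projective section.} In the idempotent setting, the absence of a Taylor term, which follows from the hypothesis $\pol(\str A)\to\id$, gives a subuniverse $B\subseteq A$ and a congruence $\theta$ on $B$ with $|B/\theta|\ge 2$ such that every idempotent polymorphism acts on $B/\theta$ as a projection. This is the Bulatov--Jeavons--Krokhin / Hobby--McKenzie characterisation. I then define $\alpha(f)$ to be the index $i$ such that $f$ acts on $(B/\theta)^r$ as the $i$-th projection. Naturality is immediate, since minors of projections are projections.

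\textbf{Step 3: show $h_0$ and $h_1$ induce the same projection.} Suppose $h_0$ acts as $\pi_i$ and $h_1$ as $\pi_j$ on $B/\theta$ with $i\ne j$; I aim for a contradiction using the cross map $h_{01}$. The idea is to view $h$ as a single polymorphism of a larger arity. Realise $\hat P_1$ as a pp-definable image: the elements $00,01,11$ correspond to triples of colorings of any clique $K_n$ split by a cut. Then $h$ yields an operation $H(a,b,c)$ on $A^{3r}$ that coincides with $h_0$, $h_{01}$, $h_1$ on the respective blocks, and which is a polymorphism of a suitable pp-defined structure. Concretely:
\begin{itemize}
\item Take a clique with vertices on both sides of the cut. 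Its edge set is a relation of $\hat K_n$ containing, simultaneously, inputs to $h_0$ (edges inside side $0$), to $h_1$ (inside side $1$) and to $h_{01}$ (cut edges).
\item Feed tuples from $B$ whose $i$-th and $j$-th coordinates lie in different $\theta$-classes.
\item By choosing these tuples inside an $\cat F$-free clique coloring, the output would have to agree with coordinate $i$ on one side and coordinate $j$ on the other, with $h_{01}$ bridging them.
\end{itemize}
I then exploit that $h_{01}$, being also idempotent on the relevant part, behaves as a projection on $B/\theta$ as well (the operation $(a,b)\mapsto h_{01}$ applied to edges incident to both sides is a restriction of a polymorphism of a pp-power). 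Comparing $h_{01}$ with $h_0$ through triangles with two vertices on side $0$ and one on side $1$ forces the projection index of $h_{01}$ to equal $i$. The symmetric argument with side $1$ forces it to equal $j$, so $i=j$.

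\textbf{Main obstacle.} I expect the main difficulty to be Step 3, specifically justifying that the triple $(h_0,h_{01},h_1)$ is captured by one idempotent polymorphism of some pp-construction of $\str A$ to which the projective section $B/\theta$ still applies. This is delicate because $\hat P_1$ relations constrain only clique-shaped patterns. My first attempt will be to show that $\str A^r\times\hat P_1\to\str A$ homomorphisms are exactly polymorphisms of the structure $\str A^{(3)}$ on $A^3$, namely the pp-power encoding a clique split by a cut. I then apply the Taylor-term dichotomy to that pp-power, where the section lifts diagonally.
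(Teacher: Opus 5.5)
Your skeleton (pass to the core, then to idempotent polymorphisms, take a naked set $(B,\theta)$ and let $\alpha$ record the projection index) matches the paper's. However, Step~3, the step that actually proves the theorem, has a genuine gap. It rests on $h_{01}$ acting as a projection on $B/\theta$, yet $h_{01}$ is not a polymorphism of $\str A$. In $\hat P_1$ the label $01$ occurs only on cut edges, and no clique with at least three vertices has all its edges cut. So $h_{01}$ is constrained only jointly with $h_0$ and $h_1$, and need not preserve $B$ or $\theta$ at all; idempotence on the diagonal does not change this.

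Even granting that $h_{01}$ were a projection, nothing you state forces its index to be $i$. A single clique only requires the resulting coloring to lie in $R_n$, a condition that may carry no information modulo $\theta$. A coloring agreeing with coordinate $i$ on one side and with coordinate $j$ on the other is not contradictory in itself; it is exactly what a ``weird'' homotopy looks like.

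The pp-power fallback does not repair this. Homomorphisms $\str A^r\times\hat P_1\to\str A$ correspond only to the \emph{coordinatewise} polymorphisms $(h_0,h_{01},h_1)$ of your structure on $A^3$. The section does not lift diagonally, because $(s,s,s)\mapsto(h_0(s),h_{01}(s),h_1(s))$ leaves the diagonal. Moreover, the middle coordinate only sees projections of the relations of $\str A$. A naked set obtained abstractly for the idempotent core of that pp-power need not be related to $(B,\theta)$.

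The missing idea is how to turn $h_{01}$ into a genuine polymorphism and then exploit cuts globally. The paper reduces to binary $h$ with $h_0,h_1$ acting as $\pi_1,\pi_2$. It puts $d=(h_0,h_1)\colon\str A^2\to\str A^2$ and takes $N$ with $d^N$ a retraction onto its image $D$. Iterating the idempotent homotopy along the cuts isolating single vertices, it shows that $h_{01}\circ d^N$ is an idempotent polymorphism agreeing with $h_{01}$ on $D$. This brings $h_{01}|_D$ under the control of the naked set, w.l.o.g.\ as the first projection.

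Then, for every edge colored $(a,b)$ modulo $\theta$, apply $h$ to $d^N\circ\chi$ along the cut isolating that edge. This recolors the edge into $b/\theta$ while keeping first-coordinate classes on all other edges. Iterating over all such edges and invoking the compactness Lemma~\ref{lem:mapFactory} yields a polymorphism that is not a projection on $B/\theta$, contradicting nakedness.

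Your Step~1 also hides real work. $\alpha_\idem(h_0)=e_{h_0}^{-1}\circ h_0$ and $\alpha_\idem(h_1)=e_{h_1}^{-1}\circ h_1$ involve different automorphisms. Producing an idempotent cut-homotopy between them, including on the cross part, is Lemma~\ref{lem:core_to_idemp}. That lemma needs three ingredients:
\begin{itemize}
\item the invertibility of $e'_{01}$;
\item the auxiliary homotopy $\tilde e$;
\item the core property, to force $e''_1=\id_{\str A}$.
\end{itemize}
``Precomposing with a unary polymorphism'' does not achieve this.
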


\subsection{Preliminaries}
We review standard notions from the algebraic approach to CSPs that are needed below, and which can be found in e.g.,~\cite{PolymorphismsAndHowToUseThem}.
An \emph{endomorphism} of $\str A$ is a homomorphism from $\str A$ to itself.
A structure $\str A$ is called a \emph{core} if every endomorphism of $\str A$ is an isomorphism.
For every structure $\str A$ there is a core $\str A'$ that is \emph{homomorphically equivalent} to $\str A$, i.e., there are homomorphisms $\str A \to \str A'$ and $\str A' \to \str A$.
The structure $\str A'$ is unique up to isomorphism, justifying the name \emph{the core of} $\str A$, and embeds into $\str A$.
Let $\str A'$ be the core of $\str A$ and $\iota \colon \str A' \to \str A$, $\psi\colon \str A \to \str A'$ be homomorphisms. These maps induce a minion homomorphism $\pol(\str A) \to \pol(\str A'), f \mapsto \psi \circ f\circ \iota_*$, where $\iota_*$ denotes the map $(\str A')^r \to \str A^r, a \mapsto \iota\circ a$ for all $r$. We refer to it as \emph{the minion homomorphism associated with $\iota$ and $\psi$.}

The \emph{unary minor} of a function $f\colon \str A^n \to \str A$ is the unary map $e_f(x) \coloneq f(x,\ldots,x)$.
We say that $f$ is \emph{idempotent} if $e_f= \id_\str A$.
We denote the minion consisting of all idempotent polymorphisms of $\str A$ by $\pol(\str A)_\idem$.
If $\str A$ is a core, there is a minion homomorphism $\alpha_\idem\colon\pol(\str A) \to \pol(\str A)_\idem, f\mapsto e_f^{-1}\circ f$.
We say that a cut-homotopy $h\colon \str A^n \times \hat P_k \to \str A$ is \emph{idempotent} if $h(a,\dots,a,ij) = a$, for all $a \in \str A, ij\in \hat P_k$.

Let $S\subseteq A$ be a subset preserved by $\pol(\str A)_\idem$ and $\theta \subseteq S^2$ be an equivalence relation preserved by $\pol(\str A)_\idem$. For every number $n$, $\theta$ induces an equivalence relation on $S^n$  by requiring elements to be $\theta$-related componentwise; we also refer to this equivalence relation by $\theta$.
Saying that $S$ and $\theta$ are \emph{preserved} by $\pol(\str A)_\idem$ precisely means that for every number $n$, every $n$-ary $f \in \pol(\str A)$ induces a function
\begin{equation*}
    S^n/_\theta \to S/_\theta, \quad a/_\theta \mapsto f(a)/_\theta,
\end{equation*} 
the \emph{action of $f$ on $S/_\theta$}.
The pair $(S, \theta)$ is called a \emph{naked set} of $\pol(\str A)_\idem$ if $\theta$ has two equivalence classes and every function contained in $\pol(\str A)_\idem$ acts as a projection on $S/_\theta$. The assignment $\pol(\str A)_\idem \to \id$ sending $f\colon \str A^n \to \str A$ to $i \in \{1,\dots, n \}$, where $f$ acts as the $i$-th coordinate projection on $S/_\theta$ is easily seen to be a minion homomorphism.
We refer to it as \emph{the minion homomorphism induced by $(S,\theta)$.} 
Finally, we say that a minion homomorphism $\alpha: \pol (\str A) \rightarrow \pol (\str B)$ \emph{preserves cut-homotopies} if whenever $f, g \in \pol (\str A)$ are cut-homotopic, then so are $\alpha(f), \alpha(g)$.

\subsection{The master proof of Theorem~\ref{thm:noHomotopies}}

Let $\str A'$ be the core of $\str A$. 
The existence of a minion homomorphism $\pol(\str A) \to \id$ implies the existence of a minion homomorphism $\pol(\str A')_\idem \to \id$, see e.g.,~\cite{Wonderland}. 
By~\cite[Prop.~4.14]{NakedSet} the latter statement implies the existence of a naked set $(S,\theta)$ of $\pol(\str A')_\idem$. The induced minion homomorphism $\alpha \colon \pol(\str A')_\idem \to \id$ collapses idempotent cut-homotopies by Proposition~\ref{prop:noHomotopies}. Further, the combination of Lemma~\ref{lem:str_to_core} and Lemma~\ref{lem:core_to_idemp} yields a minion homomorphism $\beta \colon \pol(\str A) \to \pol(\str A')_\idem$ that
preserves cut-homotopies. Moreover, Lemma~\ref{lem:core_to_idemp} shows that $\beta(f)\sim_1 \beta(g)$ is witnessed by an idempotent cut-homotopy, for all $f\sim_1 g$ in $\pol(\str A)$.
Thus, the composition $\alpha \circ \beta \colon \pol(\str A) \to \id$ yields the desired minion homomorphism that collapses cut-homotopies.

$$
\pol(\str A) 
    \xrightarrow[\text{Lemma~\ref{lem:str_to_core}}]{\substack{\text{preserves}\\\text{cut-homotopies}}}
\pol(\str A') 
    \xrightarrow[\text{Lemma~\ref{lem:core_to_idemp}}]{\substack{\text{makes cut-homotopies}\\\text{idempotent}}}
\pol(\str A')_{\idem} 
    \xrightarrow[\text{Proposition~\ref{prop:noHomotopies}}]{\substack{\text{collapses idempotent}\\\text{cut-homotopies}}}
\id
$$

\subsection{Auxiliary results for the proof of Theorem~\ref{thm:noHomotopies}}

Since $\str A$ corresponds to some finite family $\cat F$ of forbidden edge-colored cliques, also the core $\str A'$ of $\str A$ corresponds to some finite family $\cat F'$ of edge-colored cliques. Namely, since $\str A'$ is an induced substructure of $\str A$ we may choose $\cat F'$ so that the $\cat F'$-free colorings of cliques are precisely the $\cat F$-free colorings with colors contained in the domain of $\str A'$. 
For this reason we may (and will) verify that a map $(\str A')^n \to \str A'$ or $(\str A')^n \times \hat P_1 \to \str A'$ is a homomorphism in the same way we do for $\str A$.

\begin{lemma}\label{lem:str_to_core}
        Let $\str A'$ be the core of $\str A$ and let $\iota\colon \str A'\to \str A$, $\psi\colon \str A\to \str A'$ be homomorphisms. Then the
        minion homomorphism $\alpha\colon\pol(\str A)\to\pol(\str A')$ associated with $\iota$ and $\psi$ preserves cut-homotopies.
\end{lemma}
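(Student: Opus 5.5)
Given a cut-homotopy $h\colon \str A^r\times\hat P_1\to\str A$ with $h_0=f$ and $h_1=g$, I will take
$$
h'\colon (\str A')^r\times \hat P_1\to \str A',\qquad (a,ij)\mapsto \psi\bigl(h(\iota\circ a, ij)\bigr),
$$
that is, $h'=\psi\circ h\circ(\iota_*\times\id_{\hat P_1})$. Then $h'_0=\psi\circ f\circ\iota_*=\alpha(f)$ and $h'_1=\alpha(g)$, so $\alpha(f)$ and $\alpha(g)$ are cut-homotopic as soon as $h'$ is a homomorphism. The point is that $h'$ is a composition of homomorphisms. First, $\iota_*\colon(\str A')^r\to\str A^r$ is a homomorphism, because $\iota$ is applied coordinatewise and relations of powers are defined coordinatewise. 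Second, $\iota_*\times\id_{\hat P_1}\colon (\str A')^r\times\hat P_1\to\str A^r\times\hat P_1$ is a homomorphism, since the product of structures is the categorical product and products of homomorphisms are homomorphisms. Third, $h$ is a homomorphism by assumption, and $\psi$ is a homomorphism.

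The one point I would check carefully is that $(\str A')^r\times\hat P_1$ and $\str A^r\times\hat P_1$ are formed in the common signature dictated by $\cat F$. This holds because $\str A'$ is an induced substructure of $\str A$ in the same signature, as remarked before the lemma. Alternatively, I would verify $h'$ directly as the paper suggests. Take a clique $K_n$, a coloring $\chi\colon\hat K_n\to(\str A')^r$ and a cut $c$ of $K_n$. Then $h'\circ(\chi,\hat c)=\psi\circ h\circ(\iota_*\circ\chi,\hat c)$. Here $\iota_*\circ\chi$ is an $\cat F$-free coloring in each coordinate, so $h\circ(\iota_*\circ\chi,\hat c)$ is $\cat F$-free by the assumption on $h$. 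Applying the homomorphism $\psi$ keeps it in the relation $R_n$ of $\str A'$.

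There is no real obstacle here; the argument is essentially functoriality of products. I expect the only subtlety to be notational, namely making explicit that the relations of $\str A^r\times\hat P_1$ are the coordinatewise ones.
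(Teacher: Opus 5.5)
Your proposal is correct and is essentially the paper's own proof: the paper also takes $\psi\circ h\circ(\iota_*\times\id_{\hat P_1})$ and notes that, as a composition of homomorphisms, it is a cut-homotopy between $\alpha(h_0)$ and $\alpha(h_1)$. Your extra remarks on the common signature and the direct check on cliques and cuts are valid but go beyond what the paper writes out.
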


\begin{proof}
     Let $h\colon \str A^n \times \hat P_1 \to \str A$ be a cut-homotopy between polymorphisms $h_0$ and $h_1$ of $\str A$. The map 
     \begin{equation*}
         \psi\circ h\circ (\iota_* \times \id_{\hat P_1}) \colon (\str A')^n \times \hat P_1 \to \str A^n \times \hat P_1 \to \str A \to \str A'
     \end{equation*}
     is a cut-homotopy between $\psi \circ h_0 \circ \iota_* = \alpha(h_0)$ and $\psi \circ h_1 \circ \iota_* = \alpha(h_1)$.
\end{proof}

\begin{lemma}\label{lem:core_to_idemp}
        Let $\str A$ be a core. 
        Then the minion homomorphism $\alpha_\idem\colon \pol(\str A)\to \pol(\str A)_\idem$
        preserves cut-homotopies and 
        whenever $f,g\in\pol(\str A)$ are cut-homotopic, the existence of a cut-homotopy $\alpha_\idem(f)\sim_1\alpha_\idem(g)$ is witnessed by an idempotent cut-homotopy.
\end{lemma}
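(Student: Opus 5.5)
Given a cut-homotopy $h\colon \str A^n\times\hat P_1\to\str A$ between $f=h_0$ and $g=h_1$, the plan is to build the new cut-homotopy by postcomposing $h$ with the inverse of a suitable automorphism. Since $\str A$ is a core, the unary minors $e_f$ and $e_g$ are automorphisms of $\str A$. By definition $\alpha_\idem(f)=e_f^{-1}\circ f$ and $\alpha_\idem(g)=e_g^{-1}\circ g$.

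The natural first attempt is $h'=e^{-1}\circ h$ for a single automorphism $e$. This only works if $e_f=e_g$, which need not hold. So we must first understand the unary part of $h$. Consider the map
\[
u\colon \str A\times\hat P_1\to\str A,\qquad (a,ij)\mapsto h(a,\dots,a,ij).
\]
It is a homomorphism, being $h$ precomposed with the diagonal $\str A\times\hat P_1\to\str A^n\times\hat P_1$. Moreover $u_0=e_f$ and $u_1=e_g$.

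The key step is to replace $h$ by a cut-homotopy whose unary part is ``invertible'' on every edge type $00$, $01$, $11$. I would look at the three unary maps $u_{00}=e_f$, $u_{11}=e_g$ and $u_{01}\colon A\to A$. We already know $u_{00}$ and $u_{11}$ are bijections. The goal is to define
\[
h'(a,ij)=u_{ij}^{-1}\bigl(h(a,ij)\bigr).
\]
This requires $u_{01}$ to be a bijection and, more importantly, requires $h'$ to still be a homomorphism. Mixing different automorphisms on different edge types of a clique is not obviously allowed, and this is the main obstacle.

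To resolve it I would use the standard trick of iterating. The structure $\str A\times\hat P_1$ admits compositions: given cut-homotopies $h$ and $k$ with $k$ of arity $1$, one forms $(a,ij)\mapsto k(h(a,ij),ij)$. This is again a homomorphism $\str A^n\times\hat P_1\to\str A$, because the edge types align in each clique. Compose $u$ with itself in this way, setting $u^{(m)}(a,ij)=u(u^{(m-1)}(a,ij),ij)$; then $u^{(m)}_{00}=e_f^m$ and $u^{(m)}_{11}=e_g^m$. Since $A$ is finite, some power $m$ makes $e_f^m=e_g^m=\id$.

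Now set
\[
h'(a,ij)=u^{(m-1)}\bigl(h(a,ij),ij\bigr),
\]
which is a homomorphism by the composition observation. For the endpoints we get $h'_0=e_f^{m-1}\circ f=e_f^{-1}\circ f=\alpha_\idem(f)$, and likewise $h'_1=\alpha_\idem(g)$. Thus $h'$ witnesses $\alpha_\idem(f)\sim_1\alpha_\idem(g)$.

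It remains to check idempotency. We have $h'(a,\dots,a,ij)=u^{(m)}(a,ij)$, so we need $u^{(m)}_{01}=\id$. Pick $m$ also so that the map $v_{01}\colon a\mapsto u_{01}$-iterate is idempotent as a self-map of $A$. The iterate $a\mapsto u^{(m)}(a,01)$ equals $u_{01}^m$ precisely because the edge types stay fixed under the composition. Choosing $m$ a common multiple of the orders makes $u_{01}^m$ idempotent, though not necessarily the identity.

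If $u_{01}^m$ is not the identity, I would argue it must be. The endpoint maps are the identity at types $00$ and $11$. A cut $K_n\to P_1$ with all vertices on one side except one shows that $(x\mapsto u^{(m)}(x,01))$ combined with the identity yields valid colorings. From this I would try to conclude that $\str A$ restricted appropriately still maps into itself, and hence that $u_{01}^m$ is injective, using the core property via a suitable endomorphism built from a cut. I expect this verification to be the delicate point of the proof.
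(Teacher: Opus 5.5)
Your construction already proves the first half of the lemma: $h'$ is a homomorphism by your composition observation, and $h'_0=\alpha_{\idem}(f)$, $h'_1=\alpha_{\idem}(g)$. The gap is in the second half. Idempotency of $h'$ at type $01$ amounts to $t=u_{01}^m$ being $\id_{\str A}$, and you leave this open. You picked the right cut, but a single cut $K_p\setminus\{v\}\mid v$ only recolors the edges at $v$, and that alone gives no endomorphism. The missing idea is to apply this cut at every vertex in turn.

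Let $w=u^{(m)}$, so $w_0=w_1=\id_{\str A}$ and $w_{01}=t$ with $t^2=t$. For example, $m=|A|!$ works. Since $u_{01}$ is not yet known to be a permutation, you need $m$ to be a multiple of its period that is at least its index, rather than a multiple of an ``order''. For a valid coloring $\chi$ of $K_p$, the coloring $w\circ(\chi,\hat c_v)$ is valid. It agrees with $\chi$ on edges avoiding $v$ (type $00$; no edge has type $11$ since $K_p$ is loopless) and with $t\circ\chi$ on edges at $v$. Doing this successively for all vertices applies $t$ exactly twice to each edge, because each edge has two endpoints. So $t^2\circ\chi=t\circ\chi$ is valid. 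Hence $t$ is an endomorphism, thus bijective because $\str A$ is a core, and an idempotent bijection is the identity.

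With this step supplied, your argument is complete and shorter than the paper's. The paper normalizes only one endpoint ($e_0^{-1}\circ h$) and uses the same double-application trick to show that $e'_{01}$ is bijective. It then hand-builds a unary cut-homotopy $\tilde e$ with components $\id,(e'_{01})^N,(e'_{01})^{2N}$, whose homomorphism property needs its own iterated-cut verification. Finally it needs a second core argument, collapsing one color via two-vertex cuts, to see that the far endpoint becomes the identity. Your observation that cut-homotopies are closed under postcomposition with unary ones makes $u^{(m)}$ a homomorphism for free. It also normalizes both endpoints at once via the finite orders of the automorphisms $e_f,e_g$, leaving a single core argument. Both constructions in fact output the same map $(a,ij)\mapsto u_{ij}^{-1}(h(a,ij))$, which is the formula you started from.
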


\begin{proof}
    Let $h$ be a cut-homotopy between two polymorphisms $h_{0}$ and $h_{1}$ of $\str A$.
    We will construct an idempotent cut-homotopy $h''$ such that $h''_{0} = \alpha_{\idem}(h_{0})$ and $h''_{1} = \alpha_{\idem}(h_{1})$.
    Let $e\colon \str A\times {\hat P_1} \to \str A$ be the unary minor of $h$, i.e., $e(x,ij)\coloneq h(x,\ldots,x, ij)$.
    Clearly, $e$ is a homomorphism.
    
    Note that $e_{0},e_{1}$ are endomorphisms of $\str A$ and, since $\str A$ is a core, they have
    inverses.
    Define $h'\coloneq e_0^{-1}\circ h$
    and let $e'$ be its unary minor, so $e' = e_0^{-1}\circ e$. 
    We now verify that $e'_{01}$ is invertible. Since $\str A$ is a core it suffices to show that $(e_{01}')^2$ is an endomorphism of $\str A$. We do that by checking that for every $K_m$ and every coloring $\chi \colon \hat K_m \to \str A$, also $(e'_{01})^2\circ \chi$ is a coloring of $K_m$.
    For every vertex $v\in K_m$, define the cut $c_{v}\coloneq K_m\setminus \{v\} \mid v$.
    Since $e'$ is a homomorphism with $e'_{0} = \id_{\str A}$, the coloring $e'\circ (\chi, \hat c_v)\colon \hat K_m \to \str A$ is valid and it differs from $\chi$ only on edges incident to $v$, where it agrees with $e'_{01}\circ \chi$.
    Repeating this for every $v\in K_m$ gives a valid coloring, which coincides with $(e_{01}')^2\circ \chi$ at every edge. 
    Since $\str A$ is finite and $e'_{01}$  bijective, there exists some number $N$ such that $(e'_{01})^N = (e'_{01})^{-1}$. We define a map $\tilde e \colon \str A \times \hat P_1 \to \str A$ as follows
    \begin{align*}
        \tilde e_0 &= \id_{\str A},\\
        \tilde e_{01} &= (e'_{01})^N,\\
        \tilde e_1 &= (e'_{01})^{2N};
    \end{align*}
    and verify that it is a homotopy. To this end we show that for every $K_m$, every coloring $\chi\colon \hat K_m \to \str A$ and every cut $c\colon K_m \to P_1$, the map $\chi' \coloneq \tilde e \circ (\chi, \hat c)$ is a valid coloring. 
    Let $v \in K_m$ be a vertex with $c(v) = 1$, let $c_v \coloneq K_m \setminus \{v\} \mid v$ and set $\chi_1 \coloneq e' \circ (\chi,\hat c_v)$. Recursively we define $\chi_{i+1} \coloneq e' \circ (\chi_i,\hat c_v)$ for $i < N$ to obtain a valid coloring $\chi_N \colon \hat K_m \to \str A$ that agrees with $\chi'$ on all edges of the form $vw$ and $w w'$ where $w, w' \in K_m$ with  $c(w)= c(w')=0$. Here we crucially use that $e'_0 = \id_{\str A}$.
    Replacing $\chi$ by $\chi_N$ we repeat this procedure for every further vertex $v' \in K_m$ with $c(v') = 1$ to obtain a coloring that agrees with $\chi'$ proving it to be valid.
    
    We may now define the homotopy $h''\colon \str A^n \times \hat P_1 \to \str A$ as the composition
    \begin{equation*}
        \str A^n \times \hat P_1 \xrightarrow{(h'\times\id_{\hat P_1})} \str A \times \hat P_1 \xrightarrow{\tilde e}  \str A.
    \end{equation*}
    Let $e''$ be its unary minor and observe that $e''_{0} = \id_{\str A} = e''_{01}$.
    Furthermore, $h''_1 = \tilde e_1 \circ e_0^{-1} \circ h_1$, so it will follow that $\alpha_{\idem}(h_1)  = h''_1$ once we have established that $e_1'' = \id_{\str A}$.
    Assuming the contrary we obtain distinct $a,b\in \str A$ with $e_{1}''(a) = b$ and argue that the map $\phi\colon \str A \to \str A$ with $\phi(a)=b$ and $\phi(a')=a'$ for every $a'\in \str A\setminus \{a\}$ is a homomorphism. This contradicts $\str A$ being a core.
    To do so we show that for every clique $K_m$ and every coloring $\chi\colon \hat K_m\to \str A$, $\phi \circ \chi$ is a valid coloring of $K_m$ itself.
    For every edge $vw \in \hat K_m$ with $\chi(vw)=a$, we consider the cut $c_{vw}\colon K_m\setminus \{v,w\}\mid \{v,w\}$, and note that $e''\circ (\chi, \hat c_{vw})$ is a valid coloring which only differs from $\chi$ on the edge $vw$ where it takes the value $b$.
    Repeating this for every further $a$-colored edge results in a valid coloring that coincides with $\phi\circ \chi$, so $\phi$ is a homomorphism. 
\end{proof}

For the next proposition, we need the following Ramsey-theoretic lemma, which we prove in Section~\ref{sec:ramseysToolbox}.

\begin{restatable}{lemma}{mapFactory}
\label{lem:mapFactory}
    Let $n$ be a number, and for each $a\in \str A^n$ pick a subset $U_{a}\subseteq A$. Assume that for every graph $G$ and every coloring $\chi\colon \hat G\to \str A^n$, there is a coloring $\eta\colon \hat G\to \str A$ such that 
    \begin{equation*}
        \forall vw\in\hat G: \eta(vw)\in U_{\chi(vw)}.
    \end{equation*}
    Then there exists a polymorphism 
    $f \colon \str A^n \to \str A$ 
    with $f(a)\in U_{a}$ for all $a \in \str A^n$.
\end{restatable}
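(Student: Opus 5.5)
We prove Lemma~\ref{lem:mapFactory} by contraposition via a compactness-plus-Ramsey argument. Suppose no polymorphism $f$ with $f(a)\in U_a$ exists. We will build a single graph $G$ with a coloring $\chi\colon\hat G\to\str A^n$ such that every $\eta\colon\hat G\to\str A$ with $\eta(vw)\in U_{\chi(vw)}$ for all edges contains a member of $\cat F$.

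\textbf{Step 1: a universal finite witness.} Let $G_0$ be the disjoint union, over all $m$ (up to the largest clique size in $\cat F$) and all $\hat K_m\to\str A^n$ colorings, of copies of $K_m$ with the corresponding coloring. A map $f\colon A^n\to A$ is a polymorphism iff $f\circ\chi_0$ is $\cat F$-free on $G_0$. The difficulty is that an arbitrary $\eta$ on $G_0$ need not factor through $\chi_0$: different edges with the same $\chi_0$-value may receive different $\eta$-colors. So the goal is a graph $H$ with $\chi\colon\hat H\to\str A^n$ such that for every $\eta$ of the allowed type there is an embedding $\phi\colon G_0\to H$ with $\chi\circ\hat\phi=\chi_0$ on which $\eta\circ\hat\phi$ factors as $f\circ\chi_0$ for some $f$ (necessarily with $f(a)\in U_a$). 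Since $\eta\circ\hat\phi$ is $\cat F$-free, $f$ is then a polymorphism, a contradiction.

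\textbf{Step 2: the Ramsey construction.} To find such an $H$, apply a structural Ramsey theorem for the class of finite graphs whose edges carry labels in $\str A^n$. The relevant result is the Nešetřil--Rödl theorem for edge-labelled graphs, or equivalently for ordered structures in the signature with one binary relation per element of $A^n$. It yields $H$ with $H\to(G_0)^{K_2}_{|A|}$: for every $|A|$-coloring of the labelled edges of $H$ there is a copy of $G_0$ in which each edge's $\eta$-color depends only on its isomorphism type as a labelled edge, that is, only on its label $a\in A^n$. The Ramsey theorem is applied to copies of an edge ($K_2$) labelled by $a$, for each $a$ separately, or for all types at once via the product Ramsey theorem. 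On the resulting copy, $\eta$ restricted via $\hat\phi$ equals $f\circ\chi_0$ with $f(a)$ the common color of $a$-labelled edges. Note that $f$ need only be defined on labels actually occurring in $G_0$, and every $a\in A^n$ occurs, for instance in $K_2$ colored by $a$.

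\textbf{Main obstacle.} One needs the Ramsey copy of $G_0$ to be \emph{induced}, or at least such that cliques of $G_0$ map to cliques. We also need that the relevant objects (edges) are ``rigid'' enough for Ramsey classes; edges are unordered, so we should work with ordered labelled graphs and take a Ramsey expansion, then forget the order. We must also ensure that the hypothesis is applied to $H$ with its $\chi$, which is an arbitrary coloring into $\str A^n$; no $\cat F$-freeness of $\chi$ is required, so this is fine. I expect the delicate point to be verifying that the ordered Nešetřil--Rödl theorem applies with $K_2$ (the unique-up-to-order edge) as the colored substructure, so that edge colors become constant per label. Symmetric edges $vw=wv$ may force us to take the ordered edge with the lower endpoint first, which works because each unordered edge corresponds to exactly one increasing ordered pair.
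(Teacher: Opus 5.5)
Your route differs from the paper's. You build one universal target $G_0$ containing every $\str A^n$-coloring of every relevant clique. A single copy of $G_0$ on which $\eta$ depends only on the label then certifies directly that the induced $f$ is a polymorphism. You obtain that copy by iterating the single-label Ramsey statement over the $|A^n|$ labels, using nested witnesses. The paper instead proves the stronger Lemma~\ref{lem:ramsey} by applying the compactness Theorem~\ref{thm:konigramsey} to the functor of bad colorings. There the polymorphism property comes from a solution defined on all of $\cat C$, so the target $G_\bullet$ only needs to consist of disjoint edges. Your argument avoids compactness and yields $H$ as an explicit iterated Ramsey witness, in the spirit of the paper's closing remark. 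The paper's route gives a reusable statement that also yields Proposition~\ref{prop:homotopy_witness2}.

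There is, however, a genuine gap where you say that no $\cat F$-freeness of $\chi$ is required. In this paper a coloring $\chi\colon\hat G\to\str A^n$ is a homomorphism, i.e.\ an $n$-tuple of $\cat F$-free colorings, and the hypothesis of the lemma supplies $\eta$ only for such $\chi$. That is also all that is verified when the lemma is used in Proposition~\ref{prop:noHomotopies}: there $d^N\circ\chi$ and $h\circ(d^N\circ\chi,\hat c)$ are colorings only because $\chi$ is a homomorphism. A witness $H$ from the unrestricted Ramsey theorem for ordered edge-labelled graphs may contain cliques whose labelling is not coordinatewise $\cat F$-free, and then you have no $\eta$ on $H$ at all. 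This cannot simply be patched by deleting offending edges afterwards, since the Ramsey property gives no control over whether the homogeneous copy of $G_0$ uses them.

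The missing ingredient is the Ne\v{s}et\v{r}il--R\"odl theorem for ordered structures omitting a finite family of irreducible substructures. Here these are the labelled cliques, of the sizes occurring in $\cat F$, whose labelling is not in the corresponding relation of $\str A^n$. That theorem places the witness inside the class, so $\chi\colon\hat H\to\str A^n$ is a homomorphism. This is exactly the Ramsey property of $\cat C$ that the paper invokes because the forbidden patterns are cliques, and with this replacement your proof goes through. By contrast, the inducedness issue you flag is not an obstacle. For every label-preserving graph homomorphism $\phi\colon G_0\to H$, the map $\hat\phi$ is a homomorphism of structures, so $\eta\circ\hat\phi$ is automatically $\cat F$-free.
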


We are now prepared to establish that there are no ``weird homotopies'', i.e., an idempotent cut-homotopy between two polymorphisms that act as different projections on a given naked set $(S,\theta)$.

\begin{proposition}
\label{prop:noHomotopies}
    Let $\str A$ be a core and $(S,\theta)$ be a naked set of $\pol(\str A)_\idem$. Then the minion homomorphism $\alpha\colon \pol(\str A)_\idem\to\id$ induced by $(S, \theta)$ collapses idempotent cut-homotopies, i.e., if $f,g\in\pol(\str A)_\idem$ and $f\sim_1 g$ with $h$ idempotent, then $\alpha(f)=\alpha(g)$.
\end{proposition}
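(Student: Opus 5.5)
The plan is to argue by contradiction. Suppose $f,g\in\pol(\str A)_\idem$ are joined by an idempotent cut-homotopy $h\colon\str A^n\times\hat P_1\to\str A$, while $f$ acts on $S/_\theta$ as the $i$-th projection and $g$ as the $j$-th with $i\neq j$.

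\textbf{Reduction to the binary case.} Let $\sigma\colon n\to 2$ send $i\mapsto 1$ and every other coordinate to $2$. Precomposing $h$ with $\sigma^*\times\id_{\hat P_1}$ gives an idempotent cut-homotopy $h'\colon\str A^2\times\hat P_1\to\str A$. By naturality of $\alpha$, its endpoint $h'_0$ acts as the first projection $x$ on $S/_\theta$ and $h'_1$ acts as the second projection $y$. So we may assume $n=2$, with $h_0\equiv x$ and $h_1\equiv y$ modulo $\theta$ on $S$. The middle map $h_{01}$ is idempotent but need not be a polymorphism. The goal is to manufacture from $h$ an idempotent polymorphism that is not a projection on $S/_\theta$. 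This contradicts nakedness.

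\textbf{Assembling the polymorphism via Lemma~\ref{lem:mapFactory}.} Fix $m\ge 2$; the working guess is $m=3$, aiming at a majority-like action on the two $\theta$-classes. For each $a\in\str A^m$ choose a set $U_a\subseteq A$ as follows.
\begin{itemize}
\item For constant tuples, $U_{(c,\dots,c)}=\{c\}$, which forces idempotency.
\item For tuples in $S^m$, $U_a$ is the union of the $\theta$-classes allowed by the intended non-projection action, intersected with $S$.
\item For all remaining tuples, $U_a$ is the set of values that iterated applications of $h_0,h_1,h_{01}$ to pairs of coordinates of $a$ can produce.
\end{itemize}
To verify the hypothesis of Lemma~\ref{lem:mapFactory}, take a graph $G$ and a coloring $\chi=(\chi_1,\dots,\chi_m)\colon\hat G\to\str A^m$. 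Build $\eta$ by iterating cut-applications of $h$, as in the proof of Lemma~\ref{lem:core_to_idemp}. Start from $h\circ((\chi_1,\chi_2),\hat c)$ for a suitable cut $c$, then combine the result with $\chi_3$ through a second cut, and so on. Each step is a valid coloring, because $h$ is a homomorphism $\str A^2\times\hat P_1\to\str A$.

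Cuts are chosen vertex by vertex, along a fixed ordering of $G$. Idempotency of $h$ ensures that edges on which the current input tuple is constant are left untouched. This is the same mechanism by which $e'_0=\id$ was exploited in Lemma~\ref{lem:core_to_idemp}. It lets us move single vertices across the cut without disturbing the rest of the coloring. Lemma~\ref{lem:mapFactory} then yields an $m$-ary polymorphism $p$ with $p(a)\in U_a$ for all $a$. The choice of the $U_a$ makes $p$ idempotent and forbids every projection action on $S/_\theta$, which is the desired contradiction.

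\textbf{Expected obstacle.} The hard step is controlling the crossing edges. On an edge $vw$ that the cut separates, $\eta(vw)=h_{01}(\chi(vw))$, and $h_{01}$ need not respect $\theta$, nor even map $S^2$ into $S$. So the $U_a$ for $a\in S^m$ must be designed to tolerate whatever $h_{01}$ produces, while still excluding all projections. My first attempt is to use the freedom in choosing the cut, one vertex at a time, so that each edge with $\chi(vw)\in S^m$ is crossing only at a moment when its two current input coordinates are $\theta$-equivalent. Idempotency then helps: since $h_{01}$ is idempotent and the endpoint maps preserve $\theta$, the value $h_{01}(a,b)$ with $a\,\theta\, b$ should be forced into the common class. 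To justify this, apply nakedness to the polymorphisms obtained from $h$ by cutting off a single edge with the cut $c_{vw}$. If this fails, the fallback is to use Ramsey-theoretic homogenization, in the spirit of Section~\ref{sec:ramseysToolbox}, to reduce to graphs where crossing edges receive uniform behaviour.
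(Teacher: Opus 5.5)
Your reduction to binary $h$ is the same as the paper's. After that there is a genuine gap: the obstacle you flag, namely the behaviour of $h_{01}$ on crossing edges, is never resolved, and the fix you sketch cannot work. The target operation is not a projection, so different edges of $G$ must be treated differently. That forces nontrivial cuts, since trivial cuts only compose $h_0$ and $h_1$, and these act as projections on $S/_\theta$. But a cut applies $h$ uniformly to one pair of current colorings. Every edge joining the two sides then becomes crossing with whatever pair it carries at that moment, so in general you cannot ensure those pairs are $\theta$-equivalent. Nakedness also cannot be applied to ``polymorphisms obtained by cutting off a single edge'': cutting produces colorings, not polymorphisms. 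The claim about $h_{01}(a,b)$ for $a\,\theta\,b$ is unsupported, and the sets $U_a$ for your ternary target are never specified. The Ramsey fallback says nothing about how $h_{01}$ interacts with $\theta$; the Ramsey content is already packaged in Lemma~\ref{lem:mapFactory}.

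The missing idea is to make $h_{01}$ agree with a polymorphism on a suitable part of $\str A^2$. Set $d=(h_0,h_1)\colon\str A^2\to\str A^2$, pick $N$ with $d^N=d^{2N}$, and let $D$ be the image of $d^N$. Then $h_{01}\circ d^N$ is an idempotent polymorphism, shown as follows.
\begin{itemize}
\item Given $\chi\colon\hat K_m\to\str A^2$, replace it by $d^N\circ\chi$.
\item For each vertex $v$, apply $h$ simultaneously along the opposite cuts $K_m\setminus\{v\}\mid v$ and $v\mid K_m\setminus\{v\}$, producing a new $\str A^2$-coloring.
\item Iterating this $N$ times returns the edges away from $v$ to their old values, because these lie in $D$. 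The edges at $v$ become diagonal and stay fixed, by idempotency of $h$.
\item After all vertices are processed, every edge carries $h_{01}\circ d^N\circ\chi$ in both coordinates.
\end{itemize}
Hence $h_{01}$ acts as a projection, say the first, modulo $\theta$ on $S^2\cap D$. This is exactly the control over crossing edges you lacked.

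The rest of the argument goes as follows. Since $d$ fixes $\theta$-classes on $S^2$, any coloring can be pushed into $D$ by $d^N$ without changing classes on $S^2$-edges. Fix $a,b$ in different classes. For each edge $v_0w_0$ in the class of $(a,b)$, use the cut $G\setminus\{v_0,w_0\}\mid\{v_0,w_0\}$. This sends $v_0w_0$ into the class of $b$ via $h_1$, while every other $S^2$-edge stays in the class of its first coordinate, via $h_0$ or $h_{01}|_D$. Iterating over all such edges and applying Lemma~\ref{lem:mapFactory} in arity $2$ yields a binary polymorphism. It acts as the second projection on $(a,b)/_\theta$ and as the first elsewhere, which contradicts nakedness. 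No ternary operation is needed.
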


\begin{proof}

Striving for a contradiction we assume that $h$ is an idempotent cut-homotopy between $n$-ary polymorphisms $h_0,h_1$ that act as different projections on $S/_\theta$, without loss of generality, the first and second projection. Then, $h' \colon \str A^2 \times \hat P_1 \to \str A, (x,y,ij) \mapsto h_{ij}(x,y,\dots, y)$ is a cut-homotopy between binary polymorphisms of $\str A$ that act as different projections on $S/_\theta$. Hence, we may assume $h$ to be binary, and that $h_0, h_1$ act on $S/_\theta$ as the first and second projection, respectively.

Let $d$ be the homomorphism $(h_0, h_1) \colon \str A^2\to \str A^2, (a,b) \mapsto (h_0(a,b),h_1(a,b))$, and choose $N$ so that $d^N$ is a retraction, i.e., $d^N = d^{2N}$. Let $D$ be the image of $d^N$, and note that $d^N$ restricts to the identity on $D$.

\begin{claim}
The composition $f\coloneq h_{01}\circ d^N$ is an idempotent polymorphism of $\str A$ satisfying $f|_D = h_{01}|_D$.
\end{claim}
\begin{proof}[Proof of the claim]
We show that for every $K_m$ and every coloring $\chi\colon \hat K_m\to \str A^2$, 
the map $f \circ \chi$ is a homomorphism $\hat K_m \to \str A$.
Define $\chi_0\coloneq d^N\circ \chi$.

Fix a vertex $v\in K_m$ and consider the two cuts $c_{0} \coloneq K_m\setminus\{v\}\mid v$ and
$c_1\coloneq  v\mid K_m\setminus\{v\}$.
Applying $h$ to $\chi_0$ along each cut yields a coloring $\hat K_m \to \str A$ and we let $\chi_1\colon \hat K_m \to \str A^2$ denote their product. More formally, $\chi_1$ is defined to be the homomorphism $\big( h \circ (\chi_0,\hat c_0), h \circ (\chi_0,\hat c_1) \big)\colon \hat K_m \to \str A^2$.
For every $w w' \in \hat K_m$ it holds
    \begin{equation*}
            \chi_1(w w') =
    \begin{cases}
    \big(h_0\circ\chi_0(w w'), h_1\circ \chi_0(w w') \big) = d\circ \chi_0(w w') &\text{if } v\notin \{w, w'\} \\ 
    \big(h_{01}\circ\chi_0(w w'), h_{01}\circ\chi_0(w w')\big) &\text{if } v\in \{w, w'\}.
    \end{cases}
    \end{equation*}
    Observe that in the latter case both entries of $\chi_1(w w')$ agree. We iterate this construction, obtaining $\chi_{i+1}$ from $\chi_i$ in the same way $\chi_1$ is obtained from $\chi_0$ until we arrive at the coloring $\chi_N$. The aforementioned observation in combination with the fact that $h$ is idempotent guarantees that for every $w w' \in \hat K_m$ it holds 
    \begin{equation*}
            \chi_N(w w') =
    \begin{cases}
    d^N \circ \chi_0(w w') = \chi_0(w w')&\text{if } v\notin \{w, w'\}\\ 
    \big(h_{01}\circ\chi_0(w w'),h_{01}\circ\chi_0(w w')\big) &\text{if } v\in \{w, w'\}.
    \end{cases} 
    \end{equation*}
    Replacing $\chi_0$ with $\chi_N$ we repeat this procedure for every vertex of 
    $K_m$.
    Once we have exhausted all vertices, we call the resulting coloring $\chi'\colon \hat K_m \to \str A^2$. For every edge $w w'$ of $K_m$, as it is incident to precisely two vertices of $K_m$, we have
    \begin{align*}
        \chi'(w w') &= \Big( h_{01}\big(h_{01}\circ \chi_0(w w'), h_{01}\circ \chi_0(w w')\big), h_{01}\big(h_{01}\circ \chi_0(w w'), h_{01}\circ \chi_0(w w')\big) \Big)\\
        &= \big( h_{01}\circ \chi_0(w w'), h_{01}\circ \chi_0(w w') \big)\\
        &= \big( f \circ \chi(w w'), f \circ \chi(w w') \big),
    \end{align*}
    in particular, $f \circ \chi$ is a homomorphism $\hat K_m \to \str A$.
    Thus, $f$ is a homomorphism; its idempotence is clear.
\end{proof}

Without loss of generality we may assume that $f$ acts as the first projection on $S/_\theta$. Thus, by definition of $f$ each pair $(a,b) \in S^2 \cap D$ satisfies $h_{01}(a,b)/_\theta = a/_\theta$.
Further, recall that $h_0$ and $h_1$ act as the first and second projection on $S/_\theta$, respectively.
 
We now fix $a,b\in S$ in different classes of $\theta$.
We use Lemma~\ref{lem:mapFactory} to show that there is a polymorphism $g: \str A^2 \rightarrow \str A$ with $g(a,b)/_\theta = b/_\theta$ and $g(a',b')/_\theta = a'/_\theta$ whenever $a',b'\in S$ with $(a,b)/_\theta\not= (a',b')/_\theta$, contradicting the fact that $(S,\theta)$ is a naked set.
To this end, we verify that for every graph $G$ and every coloring $\chi\colon \hat G\to \str A^2$, there is a coloring $\eta\colon \hat G\to \str A$ such that for all $vw \in \hat G$ it holds:
\begin{equation*}
    \eta(vw) \in 
\begin{cases}
    b/_\theta, &\text{ if } \chi(v w) \in S^2 \text{ and } \chi(v w)/_\theta = (a,b)/_\theta \\
    a'/_\theta, &\text{ if } \chi(vw) = (a',b') \in S^2\text{ and } (a',b')/_\theta \not= (a,b)/_\theta.
\end{cases}
\end{equation*}

So let $G$ be a graph and $\chi = (\chi_1,\chi_2) \colon \hat G\to \str A^2$ a coloring. Note that the coloring $d^N\circ \chi$ agrees with $\chi$ modulo
$(S,\theta)$ on all $vw \in \hat G$ with $\chi(vw)\in S^2$ and additionally its image is contained in $D$. 
Take any edge $v_0 w_0 \in \hat G$ with $\chi(v_0 w_0)/_\theta = (a,b)/_\theta$ and consider the cut $c \coloneq G\setminus \{v_0,w_0\} \mid \{v_0,w_0\}$. 
Define $\eta'\colon \hat G\to \str A$ to be the coloring $h \circ (d^N\circ \chi,\hat c)$. Using the properties of $h$, in particular that $h_{01}|_D$ acts as the first projection on $S/_\theta$ we obtain that for $v w \in \hat G$ it holds
\begin{equation*}
    \eta'(v w)  \in
\begin{cases}
    h_{1}(d^N \circ \chi(v w))/_\theta = b/_\theta &\text{ if } v w = v_0 w_0 \\
    h_{01}(d^N \circ \chi(v w))/_\theta =  a'/_\theta &\text{ if } \chi(v w) = (a',b') \in S^2\text{ and } |\{v, w\}\cap \{v_0,w_0\}| = 1 \\
    h_{0}(d^N \circ \chi(v w))/_\theta = a'/_\theta &\text{ if } \chi(v w) = (a',b') \in S^2 \text{ and } |\{v, w\} \cap \{v_0,w_0\}| = 0.
\end{cases}
\end{equation*}
Define the coloring $\chi' \coloneq (\eta',\chi_2)\colon \hat G \to \str A$ and observe that on all $vw \in \hat G\setminus \{v_0 w_0\}$ with $\chi(vw)\in S^2$, $\chi(vw)$ and $\chi'(vw)$ agree modulo $\theta$ and furthermore $\chi'(v_0 w_0)/_\theta = (b,b)/_\theta$. 
We repeat the same procedure with $\chi'$ in place of $\chi$ with every further edge $v_0' w_0' \in \hat G$ with $\chi(v_0' w_0')/_\theta = (a,b)/_\theta$ until we have exhausted all such edges. We obtain a coloring $\tilde \eta \colon \hat G \to \str A$ such that for all $vw \in \hat G$ it holds
\begin{equation*}
        \tilde \eta(v w) \in 
\begin{cases}
    b/_\theta &\text{ if } \chi(v w)/_\theta = (a,b)/_\theta\\
    a'/_\theta &\text{ if } \chi(v w) = (a',b') \in S^2 \text{ and } (a',b')/_\theta \not= (a, b)/_\theta.
\end{cases}
\end{equation*}
Thus, $\tilde \eta$ is the desired coloring $\eta$.
\end{proof}

\section{Ramsey's toolbox}
\label{sec:ramseysToolbox}

In this section we prove the Ramsey-theoretical claims used in Section~\ref{sec:reductions} and Section~\ref{sec:therearenoweirdhomotopies}, namely Propositions~\ref{prop:homotopy_witness} and~\ref{prop:homotopy_witness2} and Lemma~\ref{lem:mapFactory}.

\begin{definition}
    A category
    $\cat C$ is said to be \emph{Ramsey} if for any two objects $\mathbf{A}$ and $\mathbf{B}$ and any finite set $N$ there exists an object $\mathbf{C}$ such that for all maps $\hom(\mathbf{A},\mathbf{C})\xrightarrow{\chi} N$ there exists a morphism $\mathbf{B}\xrightarrow{g} \mathbf{C}$ such that the composition
    $$
   \chi\circ g_*:\hom(A,B)\xrightarrow{g_*} \hom(A,C)\xrightarrow{\chi} N
    $$
    is a constant map, where $g_*$ is the postcomposition map $f\mapsto g\circ f$. Any such object $\mathbf{C}$ is called a \emph{Ramsey witness} of $\mathbf{A},\mathbf{B}$ and $N$.
\end{definition}

Given a number $n$ we define a category $\cat C$, where the objects are $\str A^n$-colored,
linearly ordered graphs, that is, triples $\mathbf{G}=(G,\chi,<)$ where $G$ is a graph, $\chi\colon \hat G\to \str A^n$ is a homomorphism and $<$ is a linear order on the vertices of $G$. A morphism in this category $\iota\colon\mathbf{G}=(G,\chi,<)\to\mathbf{G'}=(G',\chi',<')$ is an embedding between the graphs $G$ and $G'$, that is monotone with respect to the linear orders and that respects the colorings, meaning $\chi' \circ \hat\iota = \chi$. 
Since the patterns we are forbidding are cliques, a theorem by Ne\v set\v ril and Rödl~\cite{FullNesetrilRodl1983} implies that for any given $n$, $\cat C$ enjoys the \emph{Ramsey property}. For a more recent treatment see Theorem~2.9 in \cite{Hubi_ka_2026}.

\begin{theorem}[Special case of~\cite{FullNesetrilRodl1983}]
For every $n$ the category $\cat C$
is Ramsey.
\end{theorem}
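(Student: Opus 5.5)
The plan is to deduce the statement from the Ne\v{s}et\v{r}il--R\"odl theorem in its relational form: for a finite relational signature $L$ and a set $\mathcal{I}$ of irreducible $L$-structures, the class of all finite linearly ordered $L$-structures containing no member of $\mathcal{I}$ as a (non-induced, unordered) substructure is a Ramsey class with respect to embeddings. Here a structure is \emph{irreducible} if every two distinct elements lie together in some tuple of some relation. So the work is to encode the objects of $\cat C$ as such structures, and to check that morphisms of $\cat C$ correspond exactly to embeddings.

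\textbf{Encoding.} Let $L$ consist of one binary symbol $E_a$ for every $a\in\str A^n$; this is finite since $A$ and $n$ are finite. An object $(G,\chi,<)$ is sent to the ordered $L$-structure on the vertex set of $G$ with $E_a(v,w)$ iff $vw\in\hat G$ and $\chi(vw)=a$.

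The forbidden family $\mathcal{I}$ consists of the following structures.
\begin{enumerate}[label=(\roman*)]
    \item Two-element structures with $E_a(x,y)$ but not $E_a(y,x)$. These enforce symmetry.
    \item Two-element structures with $E_a(x,y)$ and $E_b(x,y)$ for some $a\neq b$. These enforce at most one colour per pair.
    \item One-element structures with $E_a(x,x)$. These rule out loops.
    \item For each clique size $m$ occurring in $\cat F$, all structures on $m$ elements in which every pair carries a symmetric single colour and the resulting colouring $\binom{m}{2}\to\str A^n$ is not in $R_m^n$. This means some coordinate yields a member of $\cat F$.
\end{enumerate}
Each of these is irreducible: in (i)--(iii) trivially, and in (iv) because every pair of distinct vertices is related. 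A structure omitting $\mathcal{I}$ is then exactly an undirected loopless graph with a map $\chi$ on its edges. Since $G$ is simple, homomorphisms $K_m\to G$ are injective, so the tuples of the relations of $\hat G$ are exactly the edge sets of $m$-cliques. Hence omitting (iv) says precisely that $\chi$ is a homomorphism $\hat G\to\str A^n$. This gives a bijection between objects of $\cat C$ and ordered structures in the class $\mathrm{Forb}(\mathcal{I})$.

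\textbf{Morphisms and the Ramsey statement.} A morphism of $\cat C$ is an order-preserving graph embedding with $\chi'\circ\hat\iota=\chi$. Because graph embeddings are induced, this is exactly an order-preserving embedding of $L$-structures: non-edges must go to non-edges, and edge colours must be preserved. Since the objects are linearly ordered, they are rigid, so embeddings $\mathbf A\to\mathbf C$ correspond bijectively to copies of $\mathbf A$ in $\mathbf C$. Thus the colouring-of-copies formulation in the Ne\v{s}et\v{r}il--R\"odl theorem coincides with the colouring of $\hom(\mathbf A,\mathbf C)$ in the definition above. Given $\mathbf A,\mathbf B$ and $N$, I would take the Ramsey witness $\mathbf C$ provided by the theorem inside $\mathrm{Forb}(\mathcal{I})$, translate it back into an object of $\cat C$, and read off the monochromatic $g\colon\mathbf B\to\mathbf C$ directly.

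\textbf{Expected obstacle.} The only delicate point is the correctness of the encoding. One must check that the hat relations quantify over homomorphisms $K_m\to G$ (including all orderings of the clique's vertices), and that forbidding the finitely many bad colourings of each $K_m$, taken as non-induced substructures, captures exactly $\cat F$-freeness in each coordinate. This works because these cliques are irreducible and carry full colour information on every pair. Everything else is a direct citation of~\cite{FullNesetrilRodl1983}; see also~\cite[Thm.~2.9]{Hubi_ka_2026}.
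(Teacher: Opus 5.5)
Your route is the paper's. The paper simply cites Ne\v{s}et\v{r}il--R\"odl, noting that the forbidden patterns are cliques and hence irreducible, and you make the encoding explicit. Items (ii)--(iv), the identification of morphisms with order-preserving embeddings, and the rigidity remark are all fine.

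\textbf{The gap.} One step fails as written. You state the theorem for structures omitting the members of $\mathcal I$ as \emph{non-induced} substructures. Under that reading, family (i) does not enforce symmetry; it kills every edge. A non-induced copy of the two-element structure whose only tuple is $E_a(x,y)$ is just a pair $(v,w)$ with $E_a(v,w)$. Such a pair occurs in the encoding of every object with an $a$-coloured edge, whether or not $E_a(w,v)$ also holds. So your $\mathrm{Forb}(\mathcal I)$ contains only edgeless structures, and the claimed bijection with the objects of $\cat C$ breaks down.

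Choosing a different family within that formulation cannot fix this. A class defined by forbidden non-induced substructures is closed under deleting tuples. The encodings of objects of $\cat C$ are not: deleting $E_a(w,v)$ while keeping $E_a(v,w)$ leaves the class.

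\textbf{The repair.} Apply the theorem in the form where the members of $\mathcal I$ are forbidden as induced substructures, i.e.\ no $\mathbf F\in\mathcal I$ embeds. This is the usual formulation, and for a finite signature it subsumes the non-induced one. Let (i)--(iii) consist of \emph{all} the finitely many two-element, respectively one-element, structures exhibiting the corresponding defect. These are still irreducible. Item (iv) works verbatim, because in a structure omitting (i)--(iii), the induced substructure on the vertex set of an $m$-clique is exactly one of the structures listed in (iv).

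Alternatively, encode each colour class as a family of $2$-element subsets rather than as a binary relation on ordered pairs. Then symmetry and looplessness are built in, (i) and (iii) disappear, and the non-induced formulation can be used as you intended.
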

Let $\cat D \colon \cat C\op \to \set$ be a contravariant functor from $\cat C$ to the category of sets. We call a family $\{s_\mathbf{G}\}_{\mathbf{G}\in \cat C}$  a \emph{solution of} $\cat D$ if $s_\mathbf{G}\in \cat D(\mathbf{G})$ for each $\mathbf{G}\in \cat C$, and for every embedding $\iota\colon \mathbf{G} \to \mathbf{H}$ in $\cat C$, we have $\cat D_\iota(s_\mathbf{H}) = s_\mathbf{G}$. 
We say that a category is \emph{confluent} if for any three objects and two morphisms 
$$
\mathbf{B} \leftarrow\mathbf{A} \rightarrow\mathbf{C}
$$ 
there is an object $\mathbf{D}$ and two morphisms 
$$
\mathbf{B} \rightarrow \mathbf{D} \leftarrow \mathbf{C}.
$$
The category $\cat C$ described above is confluent: indeed given two colored ordered graphs $\mathbf{B}$ and $\mathbf{C}$ the corresponding $\mathbf{D}$ can be obtained by taking the disjoint union and completing the resulting partial order in an arbitrary way to a linear order.

\begin{theorem}[Theorem~1.2 in \cite{Hadek_konig:ramsey}]
\label{thm:konigramsey}
    Let $\cat C$ be a category that is confluent and Ramsey and let $\cat D\colon \cat C\op\to\set$ be a functor, such that the set $\cat D(\mathbf C)$ is finite and nonempty for each $\mathbf C\in \cat C$. Then there is a solution of $\cat D$. 
\end{theorem}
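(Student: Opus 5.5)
The plan is a compactness argument. Ramsey will give the finite consistency we need, and confluence will put all the finitely many objects involved into one common object.

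\emph{Set-up.} Fix a set $\cat S$ of representatives of the isomorphism classes of objects of $\cat C$; here it is countable. The product $\prod_{\mathbf G\in\cat S}\cat D(\mathbf G)$ of finite discrete nonempty spaces is compact by Tychonoff. For every morphism $\iota\colon\mathbf G\to\mathbf H$ between objects of $\cat S$, the condition $\cat D_\iota(s_{\mathbf H})=s_{\mathbf G}$ defines a closed (indeed clopen) subset of this product. A family satisfying all these conditions extends along isomorphisms to a solution of $\cat D$. By compactness, it suffices to show that any finitely many such conditions can be satisfied simultaneously. So fix finitely many objects $\mathbf X_1,\dots,\mathbf X_m$ containing all sources and targets of the finitely many morphisms $\iota_j$ in question.

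\emph{Step 1 (joint embedding).} We need one object $\mathbf E$ that receives a morphism from every $\mathbf X_i$. The empty ordered graph is initial in $\cat C$. So we apply confluence to $\mathbf X_1\leftarrow\emptyset\rightarrow\mathbf X_2$, then to the resulting object and $\mathbf X_3$, and so on. This gives $\mathbf E$ with $\hom(\mathbf X_i,\mathbf E)\neq\emptyset$ for all $i$. (For $\cat C$ one can also take the disjoint union directly.)

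\emph{Step 2 (iterated Ramsey).} Put $\mathbf B_0=\mathbf E$. Inductively, let $\mathbf B_i$ be a Ramsey witness for $\mathbf X_i$, $\mathbf B_{i-1}$ and the finite set $N_i=\cat D(\mathbf X_i)$. Since $\cat D(\mathbf B_m)$ is nonempty, pick any $t_m\in\cat D(\mathbf B_m)$. Going down the chain, colour $\hom(\mathbf X_i,\mathbf B_i)\to N_i$ by $f\mapsto\cat D_f(t_i)$. The Ramsey property gives $g_i\colon\mathbf B_{i-1}\to\mathbf B_i$ such that $\cat D_{g_i\circ f}(t_i)$ is the same for all $f\colon\mathbf X_i\to\mathbf B_{i-1}$. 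Set $t_{i-1}=\cat D_{g_i}(t_i)$; by contravariance, $\cat D_f(t_{i-1})$ is then constant in $f\colon\mathbf X_i\to\mathbf B_{i-1}$.

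This homogeneity survives further pullback: once $t_{i-1}$ is $\mathbf X_i$-homogeneous, any $\cat D_g(t_{i-1})$ along $g\colon\mathbf B'\to\mathbf B_{i-1}$ is again $\mathbf X_i$-homogeneous, since $\cat D_{f}(\cat D_g(t_{i-1}))=\cat D_{g\circ f}(t_{i-1})$. The same composition argument carries homogeneity down to $\mathbf E$. Hence the final element $t=t_0\in\cat D(\mathbf E)$ has the following property: for each $i$, the value $\cat D_f(t)$ does not depend on $f\colon\mathbf X_i\to\mathbf E$. Call this common value $s_{\mathbf X_i}$; it is defined because $\hom(\mathbf X_i,\mathbf E)$ is nonempty by Step 1.

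\emph{Step 3 (consistency).} Let $\iota\colon\mathbf X_i\to\mathbf X_j$ be one of the given morphisms, and pick any $f\colon\mathbf X_j\to\mathbf E$. Then $f\circ\iota\colon\mathbf X_i\to\mathbf E$, so
\[
s_{\mathbf X_i}=\cat D_{f\circ\iota}(t)=\cat D_\iota(\cat D_f(t))=\cat D_\iota(s_{\mathbf X_j}).
\]
This is exactly the required finite satisfiability, and compactness then yields a solution.

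The main obstacle I expect is Step 2, in particular the bookkeeping in the iterated application of Ramsey. The order of choosing the witnesses upward and the elements $t_i$ downward has to be right, and one has to check that homogeneity for $\mathbf X_i$ is not destroyed by the later pullbacks along $g_{i-1},\dots,g_1$. The set-theoretic size issue in the compactness step is minor, because we restrict to representatives of isomorphism classes.
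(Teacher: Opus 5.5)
Your Steps~2 and~3 are correct. The overall architecture is sound: reduce to finite satisfiability by compactness, choose Ramsey witnesses upward and pull back downward to obtain a single $t\in\cat D(\mathbf E)$ that is homogeneous for every $\mathbf X_i$, then read off compatible values. The paper itself gives no proof of this statement and only cites \cite{Hadek_konig:ramsey}. Its closing remark, about a choice-free version whose witnesses are iterated Ramsey witnesses, suggests the same route.

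The gap is in Step~1. The theorem concerns an \emph{arbitrary} confluent Ramsey category, but you obtain the common target $\mathbf E$ from an initial object, the empty ordered graph. That object exists only in the concrete category of Section~\ref{sec:ramseysToolbox}. Your countability remark is likewise specific to that category, but harmless, since Tychonoff for finite discrete factors needs no countability. In general, an object receiving morphisms from all of $\mathbf X_1,\dots,\mathbf X_m$ need not exist, because confluence only bounds two objects that already share a lower bound. For instance, a disjoint union of two confluent Ramsey categories is again confluent and Ramsey: when $\hom(\mathbf A,\mathbf B)=\emptyset$, take $\mathbf C=\mathbf B$ and $g=\id_{\mathbf B}$. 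Yet objects from different parts admit no morphisms into a common object.

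The repair is short, because Step~3 only needs morphisms into $\mathbf E$ from objects linked by the finitely many chosen $\iota_j$.
\begin{itemize}
\item Split $\mathbf X_1,\dots,\mathbf X_m$ into the connected components of the finite diagram formed by the $\iota_j$. No condition links different components, and an isolated object can take an arbitrary value, so it suffices to treat one component.
\item Order the objects of that component so that each $\mathbf X_{l+1}$ is joined by some chosen $\iota$ to an earlier $\mathbf X_a$. Suppose $\mathbf E$ already receives morphisms $f_b\colon\mathbf X_b\to\mathbf E$ for all $b\le l$.
\item If $\iota\colon\mathbf X_{l+1}\to\mathbf X_a$, use $f_a\circ\iota$.
\item If $\iota\colon\mathbf X_a\to\mathbf X_{l+1}$, apply confluence to the span $\mathbf E\xleftarrow{f_a}\mathbf X_a\xrightarrow{\iota}\mathbf X_{l+1}$. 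Replace $\mathbf E$ by the resulting object and compose the old $f_b$ with the new morphism out of $\mathbf E$.
\end{itemize}
No commutativity is needed here. The paper's definition of confluence does not provide it, and Step~3 only uses that $f\circ\iota$ is \emph{some} morphism $\mathbf X_i\to\mathbf E$. With this replacement for Step~1, your Steps~2 and~3 go through verbatim, component by component.
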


We have gathered all necessary ingredients to prove the following lemma, from which Proposition~\ref{prop:homotopy_witness2} and Lemma~\ref{lem:mapFactory} will follow.
\begin{lemma}
\label{lem:ramsey}
    Fix a number $n$, a graph $G$ and a coloring
    $\chi\colon \hat G\to \str A^n$. Then there exists a graph $H$ and a coloring
    $\chi'\colon \hat H\to \str A^n$ such that
    for every coloring $\eta\colon\hat H\to \str A$, there is a polymorphism $f\colon \str A^n\to \str A$ and an embedding $\phi\colon G\to H$ such that 
    $$
    \chi'\circ\hat\phi = \chi
    \text{ and } 
    f\circ \chi = \eta\circ\hat\phi.
    $$
\end{lemma}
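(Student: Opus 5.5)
The plan is to apply the Ramsey property of the category $\cat C$ (Ne\v{s}et\v{r}il--R\"odl) once for each element of $\str A^n$, after enlarging $G$ so that every clique coloring is represented; this enlargement is what will force the resulting map to be a polymorphism. For each $a\in\str A^n$ let $\mathbf E_a$ be the object of $\cat C$ consisting of a single edge $v<w$ colored by $a$.

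\textbf{Step 1 (enlarging $G$).} Let $m_0$ be the largest clique size occurring in $\cat F$. Let $G^+$ be the disjoint union of:
\begin{itemize}
\item $G$ itself;
\item for every $m\le m_0$ and every homomorphism $\kappa\colon\hat K_m\to\str A^n$, a copy of $K_m$ colored by $\kappa$;
\item for every $a\in\str A^n$, a copy of $K_2$ whose edge is colored by $a$. (This is a special case of the previous item whenever $a\in R_2$; otherwise it is simply an extra valid colored edge.)
\end{itemize}
Every component carries a valid coloring into $\str A^n$, and a disjoint union creates no new cliques. Hence the combined coloring $\chi^+\colon\hat G^+\to\str A^n$ is a homomorphism. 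Fix any linear order $<$ on $G^+$ and set $\mathbf B=(G^+,\chi^+,<)$.

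\textbf{Step 2 (iterated Ramsey).} Enumerate $\str A^n=\{a_1,\dots,a_N\}$ and put $\mathbf C_0=\mathbf B$. Let $\mathbf C_j$ be a Ramsey witness for $\mathbf E_{a_j}$, $\mathbf C_{j-1}$ and the finite set $A$. Take $(H,\chi',<)=\mathbf C_N$; as an object of $\cat C$, $\chi'$ is automatically a valid coloring.

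Now let $\eta\colon\hat H\to\str A$ be a coloring. It induces a map $\hom(\mathbf E_{a_N},\mathbf C_N)\to A$ that sends a monotone embedding to the $\eta$-color of its image edge. Since edges are unordered and the embedding is monotone, each $a_N$-colored edge corresponds to exactly one morphism. By the Ramsey property we obtain $g_N\colon\mathbf C_{N-1}\to\mathbf C_N$ on which all $a_N$-edges receive a single color. Next we apply the same argument to $\eta\circ\hat g_N$ and $a_{N-1}$, and so on down to $a_1$. Composing gives $g\colon\mathbf B\to\mathbf C_N$ such that $\eta\circ\hat g$ is constant on $\chi^+$-color classes; call the constant value on the $a$-class $f(a)$. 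By construction of $G^+$, every $a$ occurs, so $f$ is defined on all of $\str A^n$. Equivalently, $\eta\circ\hat g=f\circ\chi^+$.

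\textbf{Step 3 (conclusion).} First, $f$ is a polymorphism. Take any $m\le m_0$ and any $\kappa\colon\hat K_m\to\str A^n$, and let $\psi$ be the inclusion of the corresponding component. Then $f\circ\kappa=\eta\circ\widehat{g\circ\psi}$. Since $g\circ\psi$ is an embedding of $K_m$ into $H$, and $\eta$ is $\cat F$-free, $f\circ\kappa$ is $\cat F$-free. This is exactly the criterion for $f$ to be a homomorphism $\str A^n\to\str A$. Second, letting $\phi$ be the restriction of $g$ to the copy of $G$ gives $\chi'\circ\hat\phi=\chi$ and $\eta\circ\hat\phi=f\circ\chi$, as required.

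\textbf{Main obstacle.} I expect the delicate step to be the bookkeeping in Step 2. One must check that morphisms $\mathbf E_a\to\mathbf C$ correspond bijectively to $a$-colored edges, so that ``constant on $\hom(\mathbf E_a,\mathbf B)$'' really means ``monochromatic on the $a$-edges of the copy''. One must also check that constancy obtained at an earlier stage survives the later compositions. It does, because each later stage only passes to a sub-copy via an embedding, and constancy on a set is inherited by any subset. If the single-edge object caused trouble, for instance through the order-orientation issue, I would instead color $\hom(K_2,\cdot)$ with both vertex orders.
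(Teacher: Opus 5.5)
Your argument is correct, but it takes a different route from the paper.

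\textbf{The paper's route.} The paper applies the compactness theorem for confluent Ramsey categories (Theorem~\ref{thm:konigramsey}) to the functor sending $\mathbf J$ to its set of \emph{bad} colorings. Suppose this functor had a solution $\{\eta_{\mathbf J}\}$. Setting $f(a)=\eta_{\mathbf G_a}(01)$ on single-edge objects, coherence along the edge embeddings $\mathbf G_{\chi(vw)}\to\mathbf G_\chi$ forces $f\circ\chi=\eta_{\mathbf G_\chi}$ for every colored clique, so $f$ is a polymorphism. The same computation gives $\eta_{\mathbf G}=f\circ\chi$, contradicting badness. Hence some $\cat D(\mathbf H)$ is empty. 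Because a solution is coherent across all objects at once, the paper never needs to place all clique colorings inside a single object.

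\textbf{Your route.} You simulate that coherence inside one finite object. You enlarge $G$ to $G^+$ containing every relevant colored clique and every colored edge. You then iterate the plain Ramsey property once per $a\in\str A^n$, so that a single embedding $g\colon\mathbf B\to\mathbf C_N$ makes $\eta\circ\hat g$ constant on color classes. The bookkeeping you flag is handled correctly: morphisms $\mathbf E_a\to\mathbf C$ are in bijection with $a$-colored edges, and earlier constancy survives later color-preserving embeddings.

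\textbf{Comparison.} The paper's version is shorter, and the same template is reused verbatim for Lemma~\ref{lem:homotopyFactory}. Yours uses only the Ramsey property of $\cat C$, with no confluence and no compactness or choice. It also produces $H$ explicitly as an $|A^n|$-fold iterated Ramsey witness, which is the constructive alternative the paper's closing remark alludes to.

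\textbf{A small inaccuracy.} Your parenthetical says the extra single edges are always valid, but this fails if $\cat F$ contains colored copies of $K_2$. In that case an edge colored by a tuple $a$ with some coordinate outside $R_2$ is not a valid object of $\cat C$. The paper's $\mathbf G_a$ share this slip. The fix is to omit such $a$ and define $f(a)$ arbitrarily: no tuple of any relation of $\str A^n$ contains them, since $\cat F$-free clique colorings avoid forbidden edge colors.
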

\begin{proof}
We define $\mathbf{G}:=(G,\chi,<)$ where $<$ is some linear order. Given a colored, ordered graph $\mathbf J = (J,\xi,<)$, we call a coloring $\eta\colon \hat J\to \str A$ \emph{good} if there is a polymorphism $f$ and a morphism $\phi\colon \mathbf{G}\to\mathbf{J}$ satisfying the condition above. Otherwise we call $\eta$ \emph{bad}.
    
    Let $\cat D\colon\cat C\op\to\set$ be the functor that sends $\mathbf{J}$ to the set of all bad colorings $\eta \colon\hat J\to \str A$.
    For each embedding $\iota\colon \mathbf{J'}\to\mathbf{J}$, let $\cat D_\iota$ be the map that restricts colorings $\eta$ along $\iota$:
    $$
    \cat D_\iota\colon \cat D(\mathbf{J})\to\cat D(\mathbf{J'}),\quad
    \eta\mapsto \eta\circ\hat\iota
    $$
    One may verify that, given a bad coloring $\eta$, the composition $\eta\circ\hat\iota$ is also bad.
    The claim will follow from Theorem~\ref{thm:konigramsey} once we show that this diagram has no solution, as this implies that $\cat D(\mathbf{H})$ is empty for some $\mathbf{H}$, meaning that all its colorings are good.
    Indeed, assume that $\{\eta_\mathbf{J}\}_{\mathbf{J}\in\cat C}$ is a solution. 
    For each tuple $a\in\str A^n$, let $\mathbf G_a$ be $K_2$ with the standard order and with its single edge $01$ colored by $a$. Then define a polymorphism $f$ as
    $$
    f(a)\coloneq \eta_{\mathbf{G}_a}(01).
    $$
    To show that this constitutes a polymorphism, consider any coloring $\chi\colon\hat K_m\to\str A^n$, and let $\mathbf{G}_\chi:= (K_m, \chi, <)$, where $<$ is the standard order on the $m$-element set.
    We show that  $f\circ \chi = \eta_{\mathbf{G}_\chi}$, which yields that $f\circ \chi$ is indeed a homomorphism $\hat K_m\to \str A$.
    Given an edge $vw$ of $K_m$ with
    $v < w$ there is a morphism in $\cat C$
    $$
    \iota\colon
    \mathbf{G}_{\chi(vw)}\to\mathbf{G}_\chi,\quad
    01\mapsto vw 
    $$
    Now compute the following:
    \begin{align*}
        f(\chi(vw))  = \eta_{\mathbf{G}_{\chi(vw)}} (01) 
        &= \cat D_\iota ( \eta_{\mathbf{G}_\chi})(01) \\&= \eta_{\mathbf{G}_\chi}(\hat\iota(01)) = 
        \eta_{\mathbf{G}_\chi}(vw)
    \end{align*}
    The first, third and fourth equalities hold by definition, the second because $\{\eta_\mathbf{J}\}_{\mathbf{J}\in\cat C}$ is a solution. But the same computation shows that $\eta_{\mathbf{G}}=f\circ\chi$, contradicting the assumption that $\eta_{\mathbf{G}}$ is bad.
\end{proof}

\mapFactory*

\begin{proof}
    Let $G_\bullet$ be a graph consisting of $|A^n|$ many disjoint edges and let $\chi_\bullet\colon\hat G_\bullet\to\str A^n$ be a surjective homomorphism. By Lemma~\ref{lem:ramsey} there is a graph $H$ and a coloring $\chi\colon\hat H\to\str A^n$, such that for all colorings $\eta\colon\hat H\to \str A$, there is a polymorphism $f$ and an embedding $\phi$, such that
    $$
    \chi\circ\hat\phi = \chi_\bullet
    \text{ and } 
    f\circ \chi_\bullet = \eta\circ\hat\phi.
    $$
    Pick a coloring $\eta\colon \hat H\to \str A$ such that $\eta(vw)\in U_{\chi(vw)}$ for all edges $vw\in\hat H$. Then for every $a\in\str A^n$ there is an edge $xy\in \hat G_\bullet$ with $a=\chi_\bullet(xy)=\chi(\hat\phi(xy))$, so
    $$
     f(a) = f(\chi_\bullet(xy)) = \eta(\hat\phi(xy))
     \in U_{\chi(\hat\phi(xy))} = U_a
    $$
\end{proof}

\lemRamsey*

\begin{proof}
Let $G_\bullet$ be the disjoint union of all graphs $G_i$ and let $\chi_\bullet\colon \hat G_\bullet\to\str A^n$ be the disjoint union of the colorings $\chi_i$. Let $J$ and $\xi\colon \hat J\to\str A^n$ be the graph and coloring obtained from Lemma~\ref{lem:ramsey}. We construct the desired graph $H$ as follows. Take the disjoint union of $G_\bullet$ and $J$. For every embedding $\iota\colon G_\bullet\to J$\ with $\xi\circ\hat\iota=\chi_\bullet$, introduce a copy of $G_\bullet \times P_2$ and identify:
\begin{itemize}
    \item every $v\in G_\bullet$ with $(v,0)\in G_\bullet\times P_2$ and
    \item every $(v,2)\in G_\bullet\times P_2$ with $\iota(v)\in J$
\end{itemize} 
Let $\chi\colon\hat H\to\str A^n$ be the following coloring:
$$
\chi(e) = 
\begin{cases}
    \chi_\bullet(vw) &\text{if } e = (vw,ij)\in\widehat{G_\bullet\times P_2}\\ 
    \xi(e) &\text{if } e\in \hat J
\end{cases}
$$
Observe that $\chi$ is indeed a homomorphism, as every clique in $H$ is fully contained in either $J$ or one of the $G_\bullet\times P_2$.

Let the embeddings $\phi_i\colon G_i\to H$ be the respective inclusions $G_i\subseteq G_\bullet\subseteq H$. The distance between different $G_i$ is at least four, as any path has to go through $J$. For every coloring $\eta\colon \hat H\to\str A$ consider its restrictions $\eta_J$  and $\eta_\bullet$ to $J\subseteq H$ and $G_\bullet\subseteq H$ respectively. The copy of $G_{\bullet}$ is the one from which this construction started; in Figure \ref{fig:G_S} it is the copy on the left.
By Lemma~\ref{lem:ramsey} there is a polymorphism $f\colon\str A^n\to \str A$ and an embedding $\iota\colon G_\bullet\to J$ such that $f\circ\chi_\bullet = \eta_J\circ \hat\iota$. Restricting $\eta$ to the copy of $G_\bullet\times P_2$ in $H$ corresponding to $\iota$, yields a 2-step cut-homotopy $\eta_\bullet\sim_2 f\circ\chi_\bullet$. Restricting this cut-homotopy to $G_i\times P_2\subseteq G_\bullet\times P_2$ yields cut-homotopies
$
\eta\circ \hat \phi_i\sim_2 f\circ\chi_i 
$.

\end{proof}
\begin{figure}
    \centering
\begin{tikzpicture}[line width=0.8pt]

\coordinate (A) at (0,0);
\coordinate (B) at (1.5,0.75);
\coordinate (C) at (1.5,0);
\coordinate (D) at (1.5,-0.75);
\coordinate (E) at (3.2,0.45);
\coordinate (F) at (3.2,0.75);
\coordinate (G) at (3.2,-0.75);
 
\draw (3.2,0) ellipse (0.75cm and 1.4cm);
 
\draw
  (-0.1118,0.2236)
  -- (1.3882,0.9736)
  arc[start angle=116.57, delta angle=-180, radius=0.25]
  -- (0.1118,-0.2236)
  arc[start angle=296.57, delta angle=-180, radius=0.25]
  -- cycle;
 
\draw
  (0,0.25)
  -- (1.5,0.25)
  arc[start angle=90.00, delta angle=-180, radius=0.25]
  -- (0,-0.25)
  arc[start angle=270.00, delta angle=-180, radius=0.25]
  -- cycle;
 
\draw
  (0.1118,0.2236)
  -- (1.6118,-0.5264)
  arc[start angle=63.43, delta angle=-180, radius=0.25]
  -- (-0.1118,-0.2236)
  arc[start angle=243.43, delta angle=-180, radius=0.25]
  -- cycle;
 
\draw
  (1.5,-0.5)
  -- (3.2,-0.5)
  arc[start angle=90.00, delta angle=-180, radius=0.25]
  -- (1.5,-1)
  arc[start angle=270.00, delta angle=-180, radius=0.25]
  -- cycle;
 
\draw
  (1.5,1)
  -- (3.2,1)
  arc[start angle=90.00, delta angle=-180, radius=0.25]
  -- (1.5,0.5)
  arc[start angle=270.00, delta angle=-180, radius=0.25]
  -- cycle;
\draw[gray] (F) circle (0.23cm);
 
\draw
  (1.4361,0.2417)
  -- (3.1361,0.6917)
  arc[start angle=104.83, delta angle=-180, radius=0.25]
  -- (1.5639,-0.2417)
  arc[start angle=284.83, delta angle=-180, radius=0.25]
  -- cycle;
\draw[gray] (E) circle (0.23cm);
 
\draw (2,0.5) -- (3.1,0.5);
 
\draw[gray] (D) circle (0.23cm);
\draw[gray] (A) circle (0.23cm);
\draw[gray] (B) circle (0.23cm);
\draw[gray] (G) circle (0.23cm);
\draw[gray] (C) circle (0.23cm);
 
\node[anchor=east]  at (-0.23, 0)    {$G_\bullet$};
\node[anchor=west]  at (3.95, 0)     {$J$};
 
\end{tikzpicture}
    \caption{The construction of $H$.}
    \label{fig:G_S}
\end{figure}

Recall that we use the notation $(vw, ij)$ for the edge $(v, i)(w, j)$ of a graph $G\times H$.
\begin{lemma}
\label{lem:homotopyFactory}
    Let $k$ and $n$ be numbers and $U_{a,ij} \subseteq A$ for all ${a}\in A^n$ and $ij \in \hat{P_k}$. Assume that for every graph $G$ and every coloring $\chi\colon \hat G \to \str A^n$, there is a coloring $\eta\colon \widehat{G\times P_k}\to \str A$ such that 
    $$
    \eta(vw,ij)\in U_{\chi(vw), ij}
    $$ 
    for all edges $(vw,ij)\in\widehat{G\times P_k}$.
    Then there exists a homomorphism 
    $h \colon \str A^n \times \hat{P_k} \to \str A$ 
    with $h({a}, ij)\in U_{{a}, ij}$ for all $a \in A^n$ and  $ij \in \hat{P_k}$.
\end{lemma}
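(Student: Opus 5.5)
The plan is to argue exactly as in the proof of Lemma~\ref{lem:ramsey}, but to use Theorem~\ref{thm:konigramsey} directly to produce a solution, rather than to derive a contradiction from one. Work in the confluent Ramsey category $\cat C$ of $\str A^n$-colored, linearly ordered graphs. Define a functor $\cat D\colon\cat C\op\to\set$ that sends $\mathbf J=(J,\xi,<)$ to the set of all colorings $\eta\colon\widehat{J\times P_k}\to\str A$ with $\eta(vw,ij)\in U_{\xi(vw),ij}$ for all edges. On an embedding $\iota\colon\mathbf J'\to\mathbf J$, let $\cat D_\iota$ act by restriction, $\eta\mapsto\eta\circ\widehat{(\iota\times\id_{P_k})}$.
\begin{itemize}
\item This is well defined because $\iota\times\id_{P_k}$ is an embedding, so the pulled-back map is again a valid coloring. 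It still satisfies the $U$-constraints because $\xi\circ\hat\iota=\xi'$.
\item Each $\cat D(\mathbf J)$ is finite, and it is nonempty by the hypothesis of the lemma.
\end{itemize}
Theorem~\ref{thm:konigramsey} therefore gives a solution $\{\eta_{\mathbf J}\}_{\mathbf J\in\cat C}$.

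From the solution I would read off $h$. For $a\in\str A^n$, let $\mathbf G_a$ be $K_2$ with order $0<1$ and its edge colored $a$. For an edge $ij\in\hat P_k$ written with $i\le j$, set $h(a,ij)\coloneq\eta_{\mathbf G_a}\bigl((0,i)(1,j)\bigr)$. The $U$-condition $h(a,ij)\in U_{a,ij}$ is then immediate from $\eta_{\mathbf G_a}\in\cat D(\mathbf G_a)$.

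It remains to check that $h$ is a homomorphism $\str A^n\times\hat P_k\to\str A$. A tuple in a relation of $\str A^n\times\hat P_k$ has the form $(\chi,\hat c)$, where $\chi\colon\hat K_m\to\str A^n$ is a coloring and $c\colon K_m\to P_k$ is a graph homomorphism. We must show that $vw\mapsto h(\chi(vw),c(v)c(w))$ is an $\cat F$-free coloring of $K_m$.

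The one subtle point is orientation. In $\widehat{K_m\times P_k}$ the edges $(vw,ij)$ and $(vw,ji)$ are distinct, whereas in $\hat P_k$ we have $ij=ji$. The fix is to exploit the freedom in choosing the order. Order $K_m$ so that $c$ is monotone, i.e.\ $v<w$ implies $c(v)\le c(w)$, and let $\mathbf G_\chi=(K_m,\chi,<)$. For an edge $vw$ with $v<w$, the map $\iota\colon\mathbf G_{\chi(vw)}\to\mathbf G_\chi$, $0\mapsto v$, $1\mapsto w$, is a morphism of $\cat C$. The solution property then gives
\begin{equation*}
\eta_{\mathbf G_\chi}\bigl((v,c(v))(w,c(w))\bigr)=\eta_{\mathbf G_{\chi(vw)}}\bigl((0,c(v))(1,c(w))\bigr)=h(\chi(vw),c(v)c(w)),
\end{equation*}
where the last equality uses $c(v)\le c(w)$.

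Finally, $v\mapsto(v,c(v))$ is an injective graph homomorphism $K_m\to K_m\times P_k$; it is a homomorphism because $c$ is one and $P_k$ has loops. So the displayed coloring is $\eta_{\mathbf G_\chi}$ pulled back along the hat of this map. Its image is a clique in $K_m\times P_k$, so the pulled-back coloring is $\cat F$-free, and hence $h\circ(\chi,\hat c)$ is $\cat F$-free.

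The main obstacle is this orientation/ordering step. It is handled by choosing a monotone order for $c$, which the Ramsey category permits since its objects carry arbitrary linear orders.
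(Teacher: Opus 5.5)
Your proposal is correct and matches the paper's proof essentially step for step. It uses the same functor of $U$-constrained colorings of $\widehat{J\times P_k}$ with restriction maps, gets a solution from Theorem~\ref{thm:konigramsey}, reads $h$ off the two-vertex objects $\mathbf G_a$, handles orientation by ordering $K_m$ so that $c$ is monotone, and finishes by pulling back along $v\mapsto(v,c(v))$, which is the paper's $\eta_{\mathbf G_\chi}\circ\widehat{(\mathrm{id},c)}$.
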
 

In particular, for each pair of edges $vw$ in $G$ and $ij$ in $P_k$ with $i\not=j$, the assumptions require that both $\eta(vw,ij)$ and $\eta(vw, ji)$ belong to $U_{\chi(vw),ij}$.

\begin{proof}
    Let $\cat D\colon \cat C\op\to\set$ be the functor that sends each linearly ordered $\str A^n$-colored graph $\mathbf{G}\coloneq (G,<,\chi)$ to the set of all homomorphisms $\eta\colon \widehat{G\times P_k} \to \str A$ that satisfy $\eta(vw,ij)\in U_{\chi(vw),ij}$ for all $vw \in \hat G$ and $ij\in\hat P_k$. 
    This set is finite and, by assumption, nonempty.
    For a morphism $\iota \colon \mathbf G \to \mathbf H$ define the map
    $$
    \cat D_\iota\colon \cat D(\mathbf{H})\to \cat D(\mathbf{G})
    $$
    to be the precomposition with the embedding $\widehat{(\iota\times \mathrm{id})}\colon \widehat{G\times P_k} \to \widehat{H \times P_k}$.
    \begin{center}
        \begin{tikzcd}
            \widehat{G\times P_k} \arrow[r, "\widehat{(\iota\times\mathrm{id})}"] \arrow[rd, "\cat D_\iota(\eta)"'] & \widehat{H \times P_k} \arrow[d, "\eta"]\\
            & \str A
        \end{tikzcd}
    \end{center}
    The category $\cat C$ is confluent and Ramsey, hence, by Theorem~\ref{thm:konigramsey}, there exists a solution to $\cat D$. Let $\{\eta_{\mathbf{G}}\}_{\mathbf G \in \cat C}$ be such a solution. 
    From this we construct the homomorphism $h\colon \str A^n \times \hat P_k \to \str A$ as follows.
    Given $a\in \str A^n, ij\in \hat P_k$ with $i\leq j$, we consider $\mathbf{G}_{a} \coloneq (K_2, 0<1, 01 \mapsto a)$ and define
    \begin{equation*}
        h(a,ij) \coloneq \eta_{\mathbf{G}_{a}}(01, ij).
    \end{equation*}
    Note that the left-hand side of the definition does not depend on the order of $i$ and $j$, but the right-hand side does, as $(01,ij)\neq (01,ji)$, by our naming convention for edges of $G \times P_k$. 
    Note that $\eta_{\mathbf{G}_{a}}(01, ij)$ belongs to $U_{a,ij}$ by definition, so it remains to check that $h$ is a homomorphism.
    To this end it suffices to check that for every $K_m$, every coloring $\chi\colon \hat K_m\to \str A^n$, and every $k$-cut $c\colon K_m\to P_k$, the mapping $h \circ (\chi,\hat c)$ is a homomorphism $\hat K_m \to \str A$.
    We define $\mathbf{G}_\chi \coloneq (K_m,\chi, <_c)$ where $<_c$ is any order that satisfies $v<_c w$ for all $v,w \in K_m$ with $c(v)<c(w)$.
    In other words, $<_c$ 
    is the order that makes $c$ monotone with respect to the standard order $\leq$ on the vertices of $P_k$.
    For all $v, w \in K_m$ with $v<_c w$, we have a morphism
    \begin{equation*}
    \iota\colon\mathbf{G}_{\chi(vw)} \to\mathbf{G}_\chi,\quad
    01\mapsto vw,
    \end{equation*}
    which we use to compute 
    \begin{align*}
    h(\chi(vw), \hat c(vw) ) &= 
    \eta_{\mathbf{G}_{\chi(vw)}}( 01, \hat{c}(vw))\\&=
    \cat D_\iota (\eta_{\mathbf{G}_{\chi}})( 01, \hat{c}(vw)) \\&= 
    \eta_{\mathbf{G}_\chi} \circ \widehat{(\iota\times\mathrm{id})}( 01, \hat{c}(vw)) \\&=
    \eta_{\mathbf{G}_{\chi}}( vw, \hat{c}(vw))\\&=
    \eta_{\mathbf{G}_\chi} \circ \widehat{(\mathrm{id},c)} (vw).
    \end{align*}
    Note that the first equality holds because $c(v) \leq c(w),$ the second one holds because $\{\eta_{\mathbf G}\}_{\mathbf G \in \cat C}$ constitutes a solution of $\cat D$ while the remaining equalities are simply expansions of definitions.
    This shows that $h\circ (\chi,\hat c)\colon\hat K_m \to \str A$ equals the map $\eta_{\mathbf{G}_\chi} \circ \widehat{(\mathrm{id},c)}$, 
    which is a homomorphism, since it is the composition of two homomorphisms.
\end{proof}

\propGadgetOne*

\begin{proof}
Fix two polymorphisms $f,g\colon \str A^n\to\str A$ that are not $k$-step cut-homotopic and set 
$$
U_{b,ij}\coloneq
\begin{cases}
    \{f(b)\} &\text{if } ij = 00\\
    \{g(b)\} &\text{if } ij = kk\\
    A &\text{otherwise,}
\end{cases}
$$
whenever $ij$ is an edge in $P_k$ and $b\in\str A^n$. By the contraposition of Lemma~\ref{lem:homotopyFactory}, there is a graph $G_{fg}$ and a coloring $\chi_{fg}\colon \hat G_{fg}\to\str A^n$ such that there is no coloring $\eta\colon \widehat{G_{fg}\times P_k}\to \str A$ with $\eta(vw,00) = f(\chi_{fg}(vw))$ and $\eta(vw,kk) = g(\chi_{fg}(vw))$. In other words, there is no $k$-step cut-homotopy between $f\circ\chi_{fg}$ and $g\circ \chi_{fg}$.

We then find the desired $G$ by taking the disjoint union of all $G_{fg}$ where $f,g$ are not $k$-step cut-homotopic and taking
$\chi\colon \hat G\to \str A^n$
to be the disjoint union of all $\chi_{fg}$. 
\end{proof}

\begin{remark}
    Although Theorem~\ref{thm:konigramsey} is a compactness lemma, its choice-free cousin, Proposition~3.1 in~\cite{Hadek_konig:ramsey}, suffices for all applications above. From its proof, one can construct the graphs we obtain in Propositions~\ref{prop:homotopy_witness} and~\ref{prop:homotopy_witness2} explicitly as iterated Ramsey witnesses. 
\end{remark}

\bibliographystyle{plain}
\bibliography{bib}
\end{document}